\documentclass[11pt]{amsart}                                        

\usepackage{a4wide}

\usepackage{latexsym}
\usepackage{amscd}                           
\usepackage{graphics, graphicx, psfrag}
\usepackage{amsmath}            
\usepackage{amssymb}
\usepackage{mathrsfs}
\usepackage{bbm}
\usepackage{amsthm}
\usepackage{enumerate}
\usepackage{color}
\usepackage{ulem}
\input xy
\xyoption{all}

\def\1{\:\!}
\def\2{\;\!}
\def\-2{\:\!\!}
\def\s{\smallskip}
\def\m{\medskip}
\def\b{\bigskip}
\def\ni{\noindent}
\def\gve{\varepsilon}

\newcommand{\N}{\mathbb{N}}                     
\newcommand{\Z}{\mathbb{Z}}                     
\newcommand{\R}{\mathbb{R}}                     
\newcommand{\C}{\mathbb{C}}                     
\newcommand{\T}{\mathbb{T}}                     
\newcommand{\D}{\mathbb{D}}                     
\newcommand{\A}{\mathbb{A}}                     

\newcommand{\cd}{\mathscr{D}}

\newcommand{\ch}{\mathscr{H}}
\newcommand{\cj}{\mathscr{J}}
\newcommand{\ck}{\mathscr{K}}
\newcommand{\cL}{\mathscr{L}}

\newcommand{\cp}{\mathscr{P}}

\newcommand{\cu}{\mathscr{U}}

\newcommand{\set}[2]{\left\{{#1}\mid{#2}\right\}}       
\newcommand{\supp}{\mathrm{supp\,}}             
\newcommand{\spec}{\mathrm{spec}\2}               
         
\newcommand{\loc}{\operatorname{loc}}         
\newcommand{\ev}{\operatorname{ev}}         
\newcommand{\ess}{\operatorname{ess}}  
  
\newcommand{\Ham}{\operatorname{Ham}}  
\newcommand{\Int}{\operatorname{Int}}  
  
\def\PSS{ {\scriptscriptstyle\operatorname{PSS}}}

\newcommand{\HF}{\operatorname{HF}}

\newcommand{\id}{\operatorname{id}}    

\newcommand{\proofend}{\hspace*{\fill} $\Box$\\}


\newtheorem{thm}{Theorem}[section]      
\newtheorem{cor}[thm]{Corollary}        
\newtheorem{lem}[thm]{Lemma}            
\newtheorem{prop}[thm]{Proposition}     
\newtheorem{defn}[thm]{Definition}      
\newtheorem{rem}[thm]{Remark}           
\newtheorem{ex}[thm]{Example}           
\newtheorem{open}[thm]{Open Problem}    

\def\red#1{{\textcolor{red}{#1}}}

\begin{document}

\title[A simple construction of an action selector]{A simple construction of an action selector on aspherical symplectic manifolds}

\author{Alberto Abbondandolo}
\thanks{The research of A.\ Abbondandolo is supported by the SFB/TRR 191 
   ‘Symplectic Structures in Geometry, Algebra and Dynamics’, funded by the Deutsche Forschungsgemeinschaft.}
\address{Alberto Abbondandolo, Fakult\"at f\"ur Mathematik, Ruhr-Universit\"at Bochum}
\email{alberto.abbondandolo@rub.de}
\author{Carsten Haug}  
\address{Carsten Haug,
Institut de Math\'ematiques,
Universit\'e de Neuch\^atel}
\email{carsten.haug@unine.ch}
\author{Felix Schlenk}  
\thanks{The research of F.\ Schlenk is supported by the SNF grant 200021-181980/1}
\address{Felix Schlenk,
Institut de Math\'ematiques,
Universit\'e de Neuch\^atel}
\email{schlenk@unine.ch}

\thanks{2010 {\it Mathematics Subject Classification:} 37J45, 53D05.}

\keywords{symplectic manifold, Hamiltonian system, action selector}

\maketitle


\begin{abstract}
We construct an action selector on aspherical symplectic manifolds that are closed or convex.
Such selectors have been constructed by Matthias Schwarz 
using Floer homology. The construction we present here is simpler and uses only Gromov compactness.
\end{abstract}

\section{Introduction}

Hamiltonian systems on symplectic manifolds tend to have many periodic
orbits. 
The ``actions'' of these orbits form an invariant for the Hamiltonian system. 
The set of actions can be very large, however. 
To get useful invariants, one selects for each Hamiltonian function 
just one action value by some minimax procedure: 
A so-called action selector associates to every time-periodic
Hamiltonian function on a symplectic manifold
the action of a periodic orbit of its flow in a continuous way. 
For this one needs compactness assumptions on either the symplectic manifold or the support 
of the Hamiltonian vector field.
The mere existence of an action selector has many applications
to Hamiltonian dynamics and symplectic topology:
It readily yields a symplectic capacity and thus implies Gromov's non-squeezing theorem, 
implies the almost existence of closed characteristics on displaceable hypersurfaces
and in particular the Weinstein conjecture for displaceable energy surfaces
of contact type, 
often proves the non-degeneracy of Hofer's metric and its unboundedness, 
etc., 
see for instance~\cite{FrGiSch05, FrSch07, hz94, Ost03, Sch00, Vi92}
and Section~\ref{s:3app} below.

Action selectors were first constructed for the standard symplectic vector
space~$(\R^{2n},\omega_0)$ by Viterbo~\cite{Vi92}, and by Hofer--Zehnder~\cite{hz94}
who built on earlier work by Ekeland--Hofer~\cite{EkHo89}. 
For more general symplectic manifolds~$(M,\omega)$, action selectors were obtained, 
up until now, only by means of Floer homology:
For symplectically aspherical symplectic manifolds
(namely those for which $[\omega] |_{\pi_2(M)}=0$),
Schwarz~\cite{Sch00} constructed the so-called PSS selector when $M$ is closed, 
and his construction was adapted to convex symplectic manifolds in~\cite{FrSch07}.
We refer to Appendix~A of~\cite{FrGiSch05} for a short description of these selectors.
For some further classes of symplectic manifolds and Hamiltonian functions, 
the PSS selector was constructed in~\cite{La13, Oh05, Ush08}.

In this paper we give a more elementary construction of an action selector for closed or 
convex symplectically aspherical manifolds.
Our construction uses only results from Section~6.4 of the text book~\cite{hz94}
by Hofer and Zehnder, that rely on Gromov compactness and rudimentary Fredholm theory,
but on none of the more advanced tools in the construction of Floer homology
(such as exponential decay, the spectral flow, unique continuation, gluing, or transversality).
In this way, the three basic properties of an action selector 
(spectrality, continuity and local non-triviality) are readily established by rather straightforward proofs, since the only tool at our hands is the compactness property of certain spaces of holomorphic cylinders.

After recollecting known results in Section~\ref{s:not},
we give the construction of our action selector for closed symplectically aspherical manifolds
in Section~\ref{s:minimal}.
In Section~\ref{s:convex} we adapt this construction to convex symplectically aspherical manifolds.
Examples are cotangent bundles and their fiberwise starshaped subdomains, on which most of classical mechanics
takes place. 
In Section~\ref{s:axiom} we show that the three basic properties of the action selector imply many further properties,
and in Section~\ref{s:3app} we illustrate by three examples how any action selector yields simple proofs of results in 
symplectic geometry and Hamiltonian dynamics. 
In Section~\ref{s:further} we sketch some variations of our construction and address open problems.

\m \ni
{\bf Idea of the construction.}
In the rest of this introduction we outline the construction of our action selector
on a closed symplectically aspherical manifold~$(M,\omega)$.
Denote by $\T = \R / \Z$ the circle of length~$1$. 
Recall that the Hamiltonian action functional on the space of contractible loops 
$C^{\infty}_{\mathrm{contr}} (\T,M)$ associated to a Hamiltonian function $H \in C^\infty(\T \times M, \R) =: \ch (M)$ 
is given by 
\[
\A_H (x) \,:=\, \int_{\D} \bar{x}^* (\omega) + \int_{\T} H(t,x(t))\, dt,
\]             
where $\bar{x}\in C^{\infty}(\D,M)$ is such that $\bar{x}|_{\partial \D} = x$.
The critical points of~$\A_H$ are the contractible $1$-periodic 
solutions of the Hamiltonian equation 
\[
\dot x (t) \,=\, X_H (t,x(t)),
\]
where the vector field~$X_H$ is defined by $\omega(X_H,\cdot) = dH$,  
and the set of critical values of~$\A_H$ is called the
action spectrum of~$H$ and denoted by~$\spec (H)$. An action selector should select an element of $\spec (H)$ in a monotone and continuous way, with respect to the usual order relation and to some reasonable topology on the space of Hamiltonians.

A first idea  for defining an action selector is
to boldly take the smallest action value of a 1-periodic orbit, 
\[
\sigma (H) \,:=\, \min \spec (H) .
\]
Since $\spec (H)$ is a compact subset of~$\R$, this definition makes sense,
and yields an invariant with the spectral property. 
However, this invariant is not very useful, since it fails to be continuous
and monotone, two crucial properties for applications. 
To see why, consider radial functions 
\[
H_f(z) \,:=\, f (\pi |z|^2) \quad  \mbox{ on }\, \R^{2n},
\]
where $f \colon [0,+\infty) \to \R$ is a smooth function with compact support. 
For an arbitrary symplectic manifold, such functions can be constructed in a Darboux chart and then be extended 
by zero to the whole manifold.
The critical points of~$\A_H$ are the origin and the (Hopf-)circles on those 
spheres that have radius~$r$ with $s = \pi r^2$ and $f'(s) \in \Z$;
at such a critical point~$x$ the value of the action is
\begin{equation} \label{e:Af}
\A_{H_f}(x) \,=\, f(s) - s \2 f'(s) ,
\end{equation}
see the left drawing in Figure~\ref{fig.radial}.
Now take the profile functions $f, f_+, f_-$ as in the right drawing:
$f' \in [0,1]$ and $f'(s) = 1$ for a unique~$s$,
while $f_-, f_+$ are $C^\infty$-close to~$f$ 
and satisfy $f_- \leq f \leq f_+$ and $f'_-, f'_+ \in [0,1)$.
Then the formula~\eqref{e:Af} shows that 
$\sigma (H_f)$ is much smaller than $\sigma (H_{f_-}) \approx \sigma (H_{f_+})$,
whence $\sigma$ is neither continuous nor monotone.
Or take $g$ with $|g|$ very small and very steep. 
Then $\sigma (H_g)$ is much smaller than $\sigma (H_f)$, whence monotonicity fails drastically.

\begin{figure}[h]   
 \begin{center}
  \psfrag{s}{$s$}
  \psfrag{sr}{$s=\pi r^2$}  \psfrag{fs}{$f(s)$}  \psfrag{fs-}{$f(s)-sf'(s)$}  \psfrag{f}{$f$}  \psfrag{f+}{$f_+$}
  \psfrag{f-}{$f_-$}  \psfrag{g}{$g$}  \psfrag{sf}{$\sigma (H_f)$}  \psfrag{sf-}{$\sigma (H_{f_-})$}  \psfrag{sf+}{$\sigma (H_{f_+})$}
  \psfrag{sg}{$\sigma (H_g)$}
  \leavevmode\includegraphics{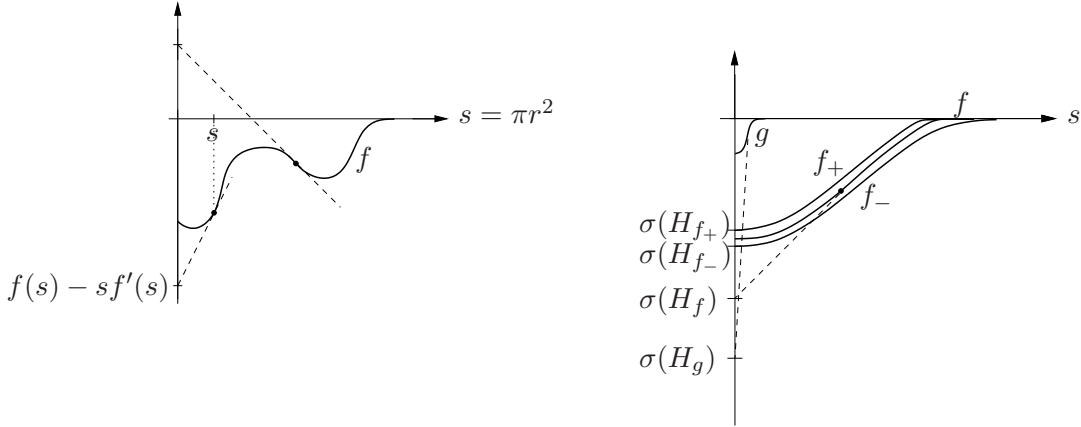}
 \end{center}
 \caption{Radial functions and their minimal spectral values}   \label{fig.radial}
\end{figure}

\m
The above discussion shows that the continuous, or monotone, selection of an action 
from $\spec (H)$ must be done by some kind of minimax procedure involving more information on the action functional than the mere knowledge of its critical values. 
This was done for the Hofer--Zehnder selector by minimax over a uniform minimax family, 
and for the Viterbo selector and the PSS selector by a homological minimax.
Our minimax will be over certain spaces of perturbed holomorphic cylinders.

\begin{figure}[h]   
 \begin{center}
 \psfrag{x}{$x$}  \psfrag{y}{$y$}  \psfrag{q}{$q_h$}  \psfrag{0}{$(0,0)$}  \psfrag{p1}{$p_1$}  \psfrag{p_2}{$p_2$}
  \leavevmode\includegraphics{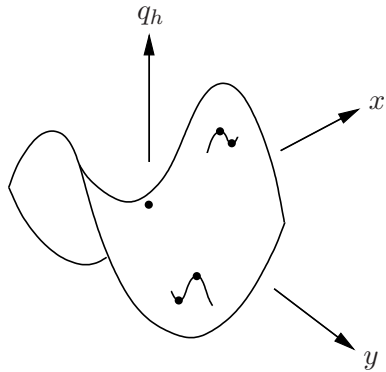}
 \end{center}
 \caption{A perturbed quadratic form $q_h$}  
 \label{fig.saddle}
\end{figure}

To introduce our construction, we first look at a toy model: 
Consider the quadratic form $q(x,y) = x^2-y^2$ on~$\R^2$
and its perturbations
\[
q_h \,=\, q + h
\]
where $h$ is a compactly supported function on $\R^2$. 
Here, the indefinite quadratic form~$q$ models the symplectic action and 
the compactly supported function~$h$ models the Hamiltonian term in~$\A_H$,
cf.\ \cite[\S 3.3]{hz94}.  
If $h=0$, the only critical point of~$q_h$ is the origin, with critical value~$0$.
If $h$ consists, for instance, of two little positive bumps, 
one centered at $(1,0)$ and one at~$(0,1)$,
then the graph of~$q_h$ looks as in Figure~\ref{fig.saddle}.
A continuous selection of critical values $h \mapsto \sigma(h)$ should, in our example, choose again~$0$,
by somehow discarding the four new critical values.

In this finite dimensional example, one could define an action selector by the minimax formula
\[
\sigma(h) \,=\, \inf \max_Y q_h,
\]
where the infimum is over the space of all images~$Y$ of continuous maps $\R \rightarrow \R^2$ 
that are compactly supported perturbations of the embedding $y \mapsto (0,y)$. 
Monotonicity in~$h$ is clear from the definition, and spectrality can be proved by standard deformation arguments using the negative gradient flow of~$q_h$. The definition of the Hofer--Zehnder action selector 
(see \cite[Section 5.3]{hz94}) is based on a similar idea and uses the fact that the Hamiltonian action functional for loops in~$\R^{2n}$ has a nice negative gradient flow.

Alternatively, one can fix a very large number~$c$ such that the sublevel $\{q_h<-c\}$ coincides with the sublevel $\{q<-c\}$ and define
the same critical value $\sigma (h)$ as
\[
\inf \left\{ a \in \R \mid \mbox{the image of } 
   i^a_* \colon H_1 ( \{q_h<a\},\{q<-c\} ) \rightarrow H_1 ( \R^2,\{q<-c\} ) 
   \mbox{ is non-zero} \right\},
\]
where the map $i^a$ is the inclusion
\[
i^a \colon \left( \{q_h<a\},\{q<-c\} \right) \hookrightarrow \left( \R^2,\{q<-c\} \right)
\]
and we are using the fact that
\[
H_1 ( \R^2,\{q<-c\} ) \cong \Z.
\]
Viterbo's definition of an action selector for compactly supported Hamiltonians on~$\R^{2n}$ uses a similar construction, which is applied to suitable generating functions, see~\cite{Vi92}. The Floer homological translation of this second definition is, in turn, at the basis of Schwarz's construction of an action selector for symplectically aspherical manifolds, see~\cite{Sch00}, 
and of all its subsequent generalizations.

Here, we would like to define an action selector $\sigma(h)$ using only spaces of bounded negative gradient flow lines: 
In the case of the Hamiltonian action functional~$\A_H$, these will correspond to finite energy solutions of the Floer equation, which have good compactness properties. A first observation is that the knowledge of the space of all bounded negative gradient flow lines of~$q_h$ is not enough for defining an action selector.
Indeed, it is easy to perturb $q$ on a small disc disjoint from the origin in such a way that the negative 
gradient flow lines of $q_h$ look like in Figure~\ref{fig.nose}:
A new degenerate critical point~$z$ is created, and the constant orbits at $(0,0)$ and at~$z$
are the only bounded negative gradient flow lines. 
But since $q_h(z)$ could be either positive or negative, the set $\{(0,0),z\}$ contains too little information for us 
to conclude that the value of the action selector should be $q_h(0,0)=0$.

\begin{figure}[h]   
 \begin{center}
 \psfrag{x}{$x$}  \psfrag{y}{$y$}  \psfrag{z}{$z$}  \psfrag{0}{$(0,0)$}
  \leavevmode\includegraphics{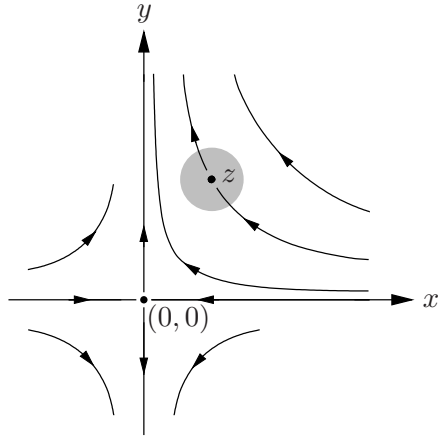}
 \end{center}
 \caption{The only bounded gradient flow lines are the constant orbits at $(0,0)$ and~$z$. }  \label{fig.nose}
\end{figure}

If, however, we are allowed to deform the function $q_h$, we can use bounded gradient flow lines to define 
an action selector that identifies the 
lowest critical value that ``cannot be shaken off''. More precisely, take a family~$\{h^s\}_{s\in \R}$ of compactly supported functions
such that $h^s = h$ for $s$ small and $h^s =0$ for $s$ large,
and look at the space~$\cu (h^s)$ of bounded solutions of the non-autonomous gradient equation
\[
\dot u(s) \,=\, -\nabla q_{h^s}(u(s)) , \qquad s \in \R .
\]
The boundedness of $u$ is equivalent to bounded energy 
\[
E(u) \,:=\, \int_\R |\nabla q_{h^s}(u(s))|^2 \,ds \,=\, 
\lim_{s \to -\infty} q_{h^s} (u(s)) - \lim_{s \to +\infty} q_{h^s} (u(s)) + \int_{\R} \frac{\partial h^s}{\partial s}(u(s))\, ds \,<\, \infty ,
\] 
or, since $h^s =h$ in the first limit and $h^s=0$ in the second limit, 
to the fact that $u(s)$ is asymptotic for $s\rightarrow -\infty$ to the following critical level of $q_h$
\[
q_h^- (u) \,:=\, \lim_{s \to -\infty} q_h (u(s))
\]
and for all $s$ large lies on the $x$-axis and converges for $s\rightarrow +\infty$ to the origin 
(the only critical point of $q$).
The number
\[
\min_{u \in \cu (h^s)} q_h^-(u) 
\]
is the lowest critical value of~$q_h$
from which a bounded $h^s$-negative gradient flow line starts.

In our example from Figure~\ref{fig.saddle}, 
if we take $h^s = \beta (s) \,h$ with a cut-off function~$\beta$,
then $\cu (h^s)$ contains no flow line~$u$ emanating from the two low critical points $p_1$ or~$p_2$ near~$(0,1)$.
On the other hand, it is easy to construct a family~$h^s$ that has 
a negative gradient line~$u(s)$ 
that converges to $p_1$ for $s\rightarrow -\infty$ and to the origin for $s\rightarrow +\infty$.
To be sure that we discard all inessential critical values, 
we therefore set
\[
\sigma (h) \,:=\, \sup_{h^s} \min_{u \in \cu (h^s)} q_h^-(u).
\]
In the example, it is quite clear that for every deformation $h^s$ there exists a flow line in~$\cu (h^s)$
emanating from the critical point~$(0,0)$, that is, 
$\sigma (h) =0$ as it should be.
In general, it is not hard to see that $\sigma (h)$ is a critical value of~$q_h$
that depends continuously and in a monotone way on~$h$.

The number $\sigma (h)$ is the lowest critical value~$c$ of~$q_h$
such that for every deformation~$h^s$ of~$h$ there exists a bounded flow line $u \in \cu (h^s)$
starting at a critical level not exceeding~$c$. Equivalently, $\sigma(h)$ is the highest critical value~$c$ of~$q_h$ 
such that for every critical level $c'<c$
there exists a deformation~$h^s$ of~$h$ such that all flow lines of $q_{h^s}$ starting at
level~$c'$
are unbounded.
That is: the whole critical set strictly below~$c$ can be shaken off.

\m
Imitating the above construction, 
and inspired by the proof of the degenerate Arnol'd conjecture in~\cite[\S 6.4]{hz94}, 
we can define an action selector for 1-periodic Hamiltonians on a closed symplectically aspherical manifold~$(M,\omega)$ in the following way. Given $H\in C^{\infty}(\T\times M)$ we consider $s$-dependent Hamiltonians $K$ in $C^{\infty}(\R \times \T \times M )$  such that $K(s,\cdot,\cdot)=H$ for $s$~small and $K(s,\cdot,\cdot)=0$ for $s$~large.  Following Floer's interpretation of the $L^2$-gradient flow of the action functional, 
we consider the space~$\cu(K)$ of solutions $u \in C^{\infty}(\R \times \T,M)$
of Floer's equation
\begin{equation}
\label{floers}
\partial_s u + J(u) \bigl( \partial_t u - X_{K} (s,t,u) \bigr) = 0
\end{equation}
that have finite energy 
\[
E(u) = \int_{\R \times \T} |\partial_su|_{J}^2 < \infty.
\]
Here, $J$ is a fixed $\omega$-compatible almost complex structure on~$TM$ and $|\cdot|_J$ is the induced Riemannian norm.
The space $\cu (K)$ is $C^\infty_{\loc}$-compact by Gromov's compactness theorem.
Now define the function
\[
a^-_H \colon \cu(K) \to \R, \qquad a^-_H (u) := \lim_{s \to -\infty} \A_H(u(s))
\]
and finally define the action selector of~$H$ by
\[
A_{J}(H) \,:=\, \sup_{K} \min_{u \in \cu(K)} a_H^-(u),
\]
where the supremum is taken over all deformations $K$ of~$H$ as above.
The number $A_{J} (H)$ is the smallest essential action of~$H$ in the following sense:
It is the lowest critical value~$c$ of~$\A_H$ 
(that is, the lowest action of a contractible 1-periodic orbit of~$H$)
such that for every deformation~$K$ of~$H$ there exists a finite energy solution of Floer's equation
for $K$ and~$J$ that starts at a critical level~$\leq c$.
For another characterization of $A_{J}(H)$, see Section~\ref{ss:equivalent}.

In our finite dimensional model, 
we could have allowed for a larger class of deformations of the gradient flow of~$q_h$,
by looking at families~$h^s$ that for $s$ large do not depend on $s$ but are not necessarily zero,
and by taking the gradient with respect to any family~$g_s$ of Riemannian metrics
that depend on~$s$ on a compact interval.
In the symplectic setting, the role of Riemannian metrics is played by
$\omega$-compatible almost complex structures. 
We may thus modify the above definition
by looking at functions~$K$ with $K(s,\cdot,\cdot)=H$ for $s$ small and $K(s,\cdot,\cdot)$ independent of $s$ for $s$ large,
and at families~$J^s$ of $\omega$-compatible almost complex structures
that depend on~$s$ on a compact interval.
In Sections~\ref{s:not}--\ref{s:convex}, we shall construct
an action selector~$A(H)$ by using these larger families of deformations.
This has the advantage that $A(H)$ is manifestly independent of the choice of~$J$.
It will be clear from the analysis of~$A(H)$ that $A_{J}(H)$ is also an action selector, 
cf.\ Section~\ref{s:smaller}.

There are also action selectors relative to closed Lagrangian submanifolds, 
that have many applications in the study of these important submanifolds. 
Such selectors were first constructed by Viterbo~\cite{Vi92} and Oh~\cite{Oh97}
for Hamiltonian deformations of the zero-section of cotangent bundles, 
and then in more general settings by Leclercq~\cite{Le08} and Leclercq--Zapolsky~\cite{LeZa18}.
Except for Viterbo's generating function approach, all these constructions are based on Lagrangian Floer homology. 
Our elementary construction of an action selector can also be carried out for closed Lagrangian submanifolds~$L$ 
under the assumption that $[\omega]$ vanishes on~$\pi_2(M,L)$.
We shall focus on the absolute case, however, leaving the necessary adaptations to the interested reader.

\section{Notations, conventions and known results}
\label{s:not}

Let $(M,\omega)$ be a closed symplectic manifold such that $[\omega]|_{\pi_2(M)}=0$.
We assume throughout that $M$ is connected. 
We denote by $X_H$ the Hamiltonian vector field associated to a Hamiltonian $H \in C^{\infty}(M)$, that is
\[
\omega(X_H,\cdot) = dH.
\]
Let $\T = \R / \Z$ be the circle of length~$1$.
The Hamiltonian action functional on the space of contractible loops 
$C^{\infty}_{\mathrm{contr}} (\T,M)$ associated to a time-periodic Hamiltonian $H \in C^{\infty}(\T \times M)$ 
has the form
\[
\A_H(x) := \int_{\D} \bar{x}^* (\omega) + \int_{\T} H(t,x(t))\, dt,
\]
where $\bar{x}\in C^{\infty}(\D,M)$ is an extension of the loop $x$ to the closed disk~$\D$, 
that is $\bar{x}|_{\partial \D}=x$; here we are identifying $\partial \D$ and~$\T$ in the standard way. 
The first integral does not depend on the choice of the extension $\bar{x}$ of~$x$ because $[\omega]$ 
vanishes on~$\pi_2(M)$. The critical points of~$\A_H$ are precisely the elements of~$\cp(H)$, the set of contractible 1-periodic orbits of~$X_H$. By the Ascoli--Arzel\`a theorem, $\cp(H)$ is a compact subset 
of $C^{\infty}_{\mathrm{contr}}(\T,M)$. 

The space $C^{\infty}(\R\times \T,M)$ is endowed with the $C^{\infty}_{\loc}$-topology, 
which is metrizable and complete. 
We shall identify $C^{\infty}(\R\times \T,M)$ with $C^{\infty}(\R, C^{\infty}(\T,M))$, and we use the notation
\[
u(s) = u(s,\cdot) \in C^{\infty}(\T,M), \qquad \forall \2 s\in \R.
\]
The additive group $\R$ acts on $C^{\infty}(\R\times \T,M)$ by translations
\[
(\sigma,u) \mapsto \tau_{\sigma} u,\qquad \mbox{where } (\tau_{\sigma} u)(s):= u(\sigma+s).
\]
Let $J$ be a smooth $\omega$-compatible almost complex structure on~$M$, meaning that
$$
g_J(\xi,\eta) := \omega(J \xi,\eta), \qquad \forall \2 \xi, \eta\in T_x M, \; \forall \2 x\in M,
$$
is a Riemannian metric on $M$. The associated norm is denoted by $|\cdot|_J$.
The $L^2$-negative gradient equation for the functional $\A_H$ is the Floer equation
\begin{equation}
\label{floer}
\partial_s u + J(u) \bigl( \partial_t u - X_H(t,u) \bigr) = 0.
\end{equation}
If $u$ is a solution of~\eqref{floer}, then the function $s \mapsto \A_H(u(s,\cdot))$ 
is non-increasing and
\begin{equation} \label{e:E}
\lim_{s\to -\infty} \A_H(u(s,\cdot)) - \lim_{s\to +\infty} \A_H(u(s,\cdot)) = E(u) := \int_{\R \times \T} \bigl|\partial_s u \bigr|_J^2\, ds \,dt.
\end{equation}
The quantity $E(u)$ defined above is called energy of the cylinder $u$.
Any $x\in \mathscr{P}(H)$ defines a stationary solution $u(s,t):=x(t)$ of~\eqref{floer}, 
which has zero energy and is called a trivial cylinder.

Now let $H \in C^{\infty}(\R \times \T \times M,\R)$ be such that $\partial_s H$, the partial derivative of~$H$ with respect to the first variable, has compact support and set
\[
H^-(t,x) := H(-s,t,x) \quad \mbox{and} \quad H^+(t,x) := H(s,t,x) \quad \mbox{for $s$ large}.
\] 
Further, let $J=\{J^s\}$ be a smooth $s$-dependent family of $\omega$-compatible almost complex structures 
such that $\partial_s J$ has compact support, and set
\[
J^-(x) := J^{-s}(x) \quad \mbox{and} \quad J^+(x) := J^s(x) \quad \mbox{for $s$ large}.
\]
If $u$ solves the $s$-dependent Floer equation
\begin{equation}
\label{floer-s}
\partial_s u + J^s(u) \bigl( \partial_t u - X_H(s,t,u) \bigr) = 0,
\end{equation}
then the energy identity reads:
\begin{equation}
\label{act-en}
\A_{H(s_0,\cdot,\cdot)} (u(s_0)) - \A_{H(s_1,\cdot,\cdot)} (u(s_1)) = 
\int_{[s_0,s_1] \times \T} \bigl| \partial_s u \bigr|_{J^s}^2\, ds \, dt - \int_{[s_0,s_1] \times \T} \partial_s H(s,t,u(s,t)) \, ds \, dt,
\end{equation}
for every $s_0<s_1$.
It follows that the function $s\mapsto \A_{H(s,\cdot,\cdot)}(u(s))$ is non-increasing 
on a neighborhood of~$-\infty$ and on a neighborhood of~$+\infty$, and that
\[
\lim_{s\to -\infty} \A_{H^-}(u(s,\cdot)) - \lim_{s\to +\infty} \A_{H^+}(u(s,\cdot)) = E(u) - \int_{\R \times \T} \partial_s H(s,t,u(s,t)) \, ds \, dt 
\]
where the energy $E(u)$ is defined as in~\eqref{e:E}, but with an $s$-dependent~$J$:
\[
E(u) = E_J(u) := \int_{\R \times \T} \bigl|\partial_s u \bigr|_{J^s}^2\, ds \,dt.
\]
Set
\[
\cu(H,J) \,:=\, \set{ u \in C^{\infty}(\R \times \T,M) }{u \mbox{ is a solution of~\eqref{floer-s} with } E(u)<\infty}.
\]
We recall that a subset $\cu$ of $C^{\infty}(\R\times \T,M)$ is said to be bounded if for every 
multi-index $\alpha\in \N^2$, $|\alpha|\geq 1$, there holds
\[
\sup_{u \in  \cu} \sup_{(s,t)\in \R\times \T}  | \partial_s^{\alpha_1} \partial_t^{\alpha_2} u(s,t) |_J < \infty.
\]
Bounded subsets are relatively compact in the $C^{\infty}_{\loc}$-topology. The next result is a special instance of Gromov compactness.

\begin{prop}
\label{comp}
Let $H = \{H^s\}$ and $J=\{J^s\}$ be as above.
\begin{enumerate}[\rm (i)]
\item The set $\cu(H,J)$ is a compact subset of $C^{\infty}(\R\times \T,M)$.
\item For $u \in \cu(H,J)$ the set
\[
\alpha\mbox{-}\lim (u) \,:=\, \set{ \lim_{n \to \infty} \tau_{s_n} u }{s_n \to -\infty 
\mbox{ is such that } \tau_{s_n} u \mbox{ converges}} 
\]
is a non-empty subset of~$\cu(H^-,J^-)$ and consists of trivial cylinders of the form $v(s,t):=x(t)$, 
for some $x \in \cp(H^-)$ with action
\[
\A_{H^-}(x) = \lim_{s\rightarrow -\infty} \A_{H^-}(u(s)).
\]
\end{enumerate}
\end{prop}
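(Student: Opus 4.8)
The plan is to combine the energy identity \eqref{act-en} with the bounded-implies-compact principle and Gromov compactness as packaged before the statement. For part~(i), the key point is to promote the a~priori finiteness of $E(u)$ into uniform $C^\infty_{\loc}$-bounds on $\cu(H,J)$. First I would fix $u\in\cu(H,J)$ and use \eqref{act-en} together with the facts that $s\mapsto\A_{H^-}(u(s))$ is non-increasing near $-\infty$, that $s\mapsto\A_{H^+}(u(s))$ is non-increasing near $+\infty$, and that $\partial_s H$ is compactly supported, to conclude that the two limits $\lim_{s\to\pm\infty}\A_{H^\pm}(u(s))$ exist in $\R$ and that the total variation of the action along $u$ is controlled by $E(u)+\|\partial_sH\|_{L^1}$. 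Because $\omega$ is symplectically aspherical, $\A_H$ takes values in a bounded interval on any $C^0$-bounded family of loops, so — together with the compactness of $M$, which forces $u$ to stay in a fixed compact set — one gets a uniform bound on $E(u)$ over all $u\in\cu(H,J)$. At this stage I would invoke elliptic bootstrapping for the Floer equation \eqref{floer-s} (the standard ``$\varepsilon$-regularity'' plus interior estimates from \cite[\S6.4]{hz94}): a uniform energy bound, together with the absence of bubbling — which is exactly where symplectic asphericity enters, since $[\omega]|_{\pi_2(M)}=0$ rules out nonconstant holomorphic spheres — yields uniform bounds on $\|\partial_su\|_{L^\infty}$, hence on all higher derivatives, so $\cu(H,J)$ is a bounded subset of $C^\infty(\R\times\T,M)$ and therefore relatively compact. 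Closedness is then routine: $C^\infty_{\loc}$-limits of solutions of \eqref{floer-s} solve \eqref{floer-s}, and the energy $E(u)=\lim_{R\to\infty}\int_{[-R,R]\times\T}|\partial_su|_{J^s}^2$ is lower semicontinuous while being bounded by the uniform constant found above, so the limit lies in $\cu(H,J)$.

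For part~(ii), I would argue as follows. Given $u\in\cu(H,J)$, the family $\{\tau_{s_n}u\}$ with $s_n\to-\infty$ lies in the compact set $\cu(H,J)$ by part~(i) (here one uses that $\tau_{s_n}u$ solves the Floer equation with the translated data, which for $s_n$ sufficiently negative is exactly the $s$-independent pair $(H^-,J^-)$, so the limits land in $\cu(H^-,J^-)$), so subsequential limits exist; thus $\alpha\text{-}\lim(u)$ is non-empty. Next, finiteness of $E(u)$ gives $\int_{(-\infty,0]\times\T}|\partial_su|_{J^s}^2<\infty$, hence $\int_{[-n-1,-n]\times\T}|\partial_su|^2\to0$; since $|\partial_s(\tau_{s_n}u)|=|\partial_su(\cdot+s_n,\cdot)|$, any limit $v=\lim\tau_{s_n}u$ satisfies $\partial_sv\equiv0$, i.e. $v(s,t)=x(t)$ is independent of $s$. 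Plugging $\partial_sv=0$ into the ($s$-independent, since we are past the support of $\partial_sJ$ and $\partial_sH$) Floer equation \eqref{floer} for $(H^-,J^-)$ gives $J^-(x)(\dot x-X_{H^-}(t,x))=0$, hence $x\in\cp(H^-)$; contractibility is inherited from $u(s_n,\cdot)$, which is contractible for every $n$. Finally, for the action value: $s\mapsto\A_{H^-}(u(s))$ is non-increasing and bounded below on a neighborhood of $-\infty$, so $\ell:=\lim_{s\to-\infty}\A_{H^-}(u(s))$ exists; and $\A_{H^-}$ is continuous on $C^\infty(\T,M)$ in the $C^1$-topology (indeed the area term is continuous on $C^0$-convergent loops that are $C^0$-close, using asphericity to make sense of the area difference via a thin cylinder), while $\tau_{s_n}u(0,\cdot)=u(s_n,\cdot)\to x$ in $C^\infty(\T,M)$, so $\A_{H^-}(x)=\lim_n\A_{H^-}(u(s_n))=\ell$.

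I expect the main obstacle to be the uniform energy bound feeding into the compactness in part~(i) — specifically, ruling out energy escaping to $\pm\infty$ along the cylinder and controlling $\A_{H^\pm}$ uniformly so that $\sup_{u}E(u)<\infty$. Once that is in place, the no-bubbling argument is exactly the symplectically aspherical case treated in \cite[\S6.4]{hz94} and requires only Gromov compactness and elementary elliptic estimates, as advertised; and part~(ii) is then essentially a bookkeeping exercise with the energy identity and the translation action. A minor technical point worth isolating is the continuity of the action functional $\A_{H^-}$ with respect to $C^\infty$-convergence of loops in the aspherical setting, which I would state as a lemma (or extract from Section~\ref{s:not}) and use both here and later.
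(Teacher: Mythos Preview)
Your argument has a genuine gap in part~(i): the claim that ``$\A_H$ takes values in a bounded interval on any $C^0$-bounded family of loops'' is false, and this is precisely the step on which your uniform energy bound rests. On a compact $M$ every loop is $C^0$-bounded, so your claim would say $\A_H$ is globally bounded on contractible loops; but already on $T^2=\R^2/\Z^2$ (symplectically aspherical) the contractible loop that lifts to a circle of radius~$n$ in~$\R^2$ has symplectic area~$\pi n^2$. Without an a~priori $C^1$-bound on the loops $u(s,\cdot)$ --- which would itself require the gradient estimates you are trying to establish --- there is no way to bound $\lim_{s\to-\infty}\A_{H^-}(u(s))$ from above and $\lim_{s\to+\infty}\A_{H^+}(u(s))$ from below uniformly in~$u$ by this route. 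So your scheme ``uniform energy bound $\Rightarrow$ (i) $\Rightarrow$ (ii)'' breaks at the first arrow.

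The paper's proof reverses the logical order to avoid exactly this circularity. It first shows that for each fixed~$c$ the slice $\cu_c(H,J)=\{E\le c\}$ is compact (bubbling plus bootstrapping, using only the bound $E\le c$). This already gives, for a single $u$, relative compactness of $\{\tau_s u : s\in\R\}$ since these translates all have energy~$E(u)$; from that one proves~(ii), i.e.\ the $\alpha$-limits are periodic orbits of~$H^-$. Only then does the uniform energy bound follow, because the asymptotic actions now lie in the compact set $\spec(H^{\pm})$, yielding
\[
E(u)\le \max_{\cp(H^-)}\A_{H^-}-\min_{\cp(H^+)}\A_{H^+}+L\|\partial_sH\|_\infty,
\]
and hence $\cu(H,J)=\cu_c(H,J)$ for this~$c$. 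Two smaller remarks on your part~(ii): the translates $\tau_{s_n}u$ do \emph{not} lie in $\cu(H,J)$ but in $\cu(\tau_{s_n}H,\tau_{s_n}J)$ (your parenthetical is closer to the truth; compactness comes from the fixed energy $E(\tau_{s_n}u)=E(u)$); and your argument that $\partial_s v\equiv 0$ via $\int_{[s_n-R,s_n+R]\times\T}|\partial_s u|^2\to 0$ is correct and is a clean alternative to the paper's route via $\tau$-invariance of $\alpha$-$\lim(u)$.
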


\begin{proof}[Outline of the proof.]
Statement (i)  is proved in Corollary~1 and Proposition~11 in Section~6.4 of~\cite{hz94} for the case that 
$H$ and~$J$ do not depend on~$s$. That proof readily generalizes to our situation. 
Statement~(ii) can be obtained by adapting Propositions~8 and~9 in \cite[\S 6.3]{hz94} and by
using Lemma~2 in \cite[\S 6.4]{hz94}. We nevertheless sketch the main steps of the proof of both statements, 
since we wish to make clear which tools are actually used. See \cite[\S 3]{Ha18} for more details.

One starts by proving that for every $c\geq 0$ the set
\[
\cu_c(H,J) \,:=\, \{ u\in \cu(H,J) \mid E(u)\leq c\}
\]
is compact. The uniform boundedness of the first derivatives requires a bubbling-off analysis, that uses the assumption that $[\omega]$ vanishes on~$\pi_2(M)$ and the uniform bound on the energy. 
Once uniform bounds on the first derivatives have been established, the bounds on all higher derivatives follow from elliptic bootstrapping. This shows that $\cu_c(H,J)$ is bounded in $C^{\infty}(\R \times \T, M)$. 
By the lower semicontinuity of the energy, that is
\[
u_n \rightarrow u \mbox{ in } C^{\infty}(\R\times \T,M) \qquad \Longrightarrow \qquad E(u) \leq \liminf_{n\rightarrow \infty} E(u_n),
\]
the set $\cu_c(H,J)$ is also closed in $C^{\infty}(\R \times \T, M)$, and hence compact.

Statement (i) will thus follow from the fact that $\cu(H,J)=\cu_c(H,J)$ when $c$ is large enough. 
In order to prove the latter fact, we need to address statement~(ii). Let $u\in \cu(H,J)$. 
That the set $\alpha$-$\lim(u)$ is not empty follows from the fact that the set 
\[
\{\tau_s u \mid s\in \R\}
\]
is relatively compact in $C^{\infty}(\R \times \T, M)$, by the same argument sketched above. 
Now assume that $v = \lim_{n \to \infty} \tau_{s_n}u$ with $s_n \to -\infty$. Since $v_n := \tau_{s_n} u$ solves 
the equation
\[
\partial_s v_n + (\tau_{s_n}J)(v_n) \bigl( \partial_t v_n - X_{\tau_{s_n} H}(s,t,v_n) \bigr) = 0,
\]
and since $\tau_{s_n} H$ converges to $H^-$ and $\tau_{s_n}J$ converges to~$J^-$, the limit~$v$ 
is a solution of the $s$-independent Floer equation defined by~$H^-$ and~$J^-$. 
Moreover, since
\[
\int_{[-T,T] \times \T} |\partial_sv|^2_{J^-} \,ds\,dt \,=\, 
\lim_{n \to \infty} \int_{[-T,T] \times \T} |\partial_s v_n|^2_{\tau_{s_n} J} \,ds\,dt \,\leq\,
\liminf_{n \to \infty} E_{\tau_{s_n} J} (v_n)
\]
for every $T>0$
and since
$E_{\tau_{s_n}J}(v_n) = E_J(u)$ for all~$n$, we have
\[
E_{J^-}(v) \,\leq\, \liminf_{n \to \infty} E_{\tau_{s_n}J}(v_n) \,=\, E_J(u) .
\]
Hence $v \in \cu(H^-,J^-)$, and it remains to show that $v$ is a trivial cylinder for~$H^-$. 
Consider the function
\[
a_{H^-} \colon C^{\infty}(\R\times \T,M) \to \R, \qquad a_{H^-}(w) = \A_{H^-}(w(0)) .
\]
Since $H(s,\cdot,\cdot)=H^-$ for $s\leq -S$, where $S$ is a sufficiently large number, 
the function 
\[
s\mapsto a_{H^-}(\tau_s u) = \A_{H^-}(u(s))
\]
is non-increasing on the interval $(-\infty,-S]$. Since $u$ has finite energy, this function is also bounded, 
and hence converges to some real number~$a$ for $s\rightarrow -\infty$. From the continuity of~$a_{H^-}$ 
we deduce that $a_{H^-}(v)=a$ for all $v \in \mbox{$\alpha$-$\lim(u)$}$. The latter set is clearly invariant 
under the action of~$\tau_s$, so we have that
\[
a_{H^-}(\tau_s v) = a_{H^-}(v) = a = \lim_{s\rightarrow -\infty} \A_{H^-}(u(s))
\]
for all $s\in \R$. The energy identity for $v$ then forces $v$ to be a trivial cylinder $v(s,t)=x(t)$ 
of action $\A_{H^-}(x)=a$. This concludes the proof of~(ii).

By the energy identity \eqref{act-en}, each $u\in \cu(H,J)$ has then the uniform energy bound
\begin{equation}
\label{unifen}
E(u) \leq \max_{x\in \mathscr{P}(H^-)} \A_{H^-}(x) -  
          \min_{x\in \mathscr{P}(H^+)} \A_{H^+}(x) + L \, \|\partial_s H\|_{\infty},
\end{equation}
where $L$ is the length of an interval outside of which $\partial_s H(\cdot,t)$ vanishes for all $t\in \T$. 
This shows that if $c$ is at least the quantity on the right-hand side of inequality~\eqref{unifen}, 
then $\cu(H,J)=\cu_c(H,J)$, and concludes the proof of~(i).  
\end{proof}

The other crucial fact that we need is the following result, which implies in particular that $\cu(H,J)$ is not empty.

\begin{prop}
\label{surj}
Let $H = \{H^s\}$ and $J=\{J^s\}$ be as above. For every $z
\in \R \times \T$ and every $m \in M$ there is at least one $u\in \cu(H,J)$ such that $u(z)=m$.
\end{prop}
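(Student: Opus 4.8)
The plan is a continuation argument: I would propagate the (evident) surjectivity of the evaluation map from an explicit, $s$-independent end of a homotopy of data to the given data, using the compactness of Proposition~\ref{comp} as the only serious analytic input. First I would reduce to the case $z=(0,0)$: translating in $s$ and in $t$ and correspondingly translating $(H,J)$ preserves the class of admissible data, and the surjectivity of $\ev_z$ for the original data is equivalent to the surjectivity of $\ev_{(0,0)}$ for the translated data. So fix $m\in M$; the goal is to produce $u\in\cu(H,J)$ with $u(0,0)=m$. Then I would choose a smooth homotopy $\{(H_\lambda,J_\lambda)\}_{\lambda\in[0,1]}$ through admissible data --- with the supports of $\partial_sH_\lambda$ and $\partial_sJ_\lambda$ contained in a fixed compact interval and $\|H_\lambda\|_{C^k}$ bounded in $\lambda$ for each $k$ --- such that $(H_1,J_1)=(H,J)$ and $(H_0,J_0)$ is \emph{independent of $s$}, say $H_0\equiv 0$ and $J_0$ a fixed $\omega$-compatible almost complex structure; one may simply take $H_\lambda=\lambda H$ and let $J_\lambda$ interpolate. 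This imitates the proof of the degenerate Arnol'd conjecture in \S 6.4 of~\cite{hz94}.

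At the end $\lambda=0$, Floer's equation~\eqref{floer-s} is the Cauchy--Riemann equation for $J_0$, and a finite-energy $J_0$-holomorphic cylinder extends across its two punctures (removal of singularities) to a $J_0$-holomorphic sphere, whose energy equals the value of $[\omega]$ on its homotopy class and hence vanishes because $[\omega]|_{\pi_2(M)}=0$; so $\partial_s u\equiv 0$, then $\partial_t u\equiv 0$, so $u$ is constant. Thus $\cu(H_0,J_0)$ is exactly the space of constant maps and $\ev_{(0,0)}\colon\cu(H_0,J_0)\to M$ is a diffeomorphism. This is where the ``rudimentary Fredholm theory'' enters: the linearization of~\eqref{floer-s} at a constant solution is the ordinary Cauchy--Riemann operator on $\C^n$-valued maps of the cylinder, which is onto with kernel the constant sections $\cong T_mM$, so the moduli space is cut out transversally and $\ev_{(0,0)}$ identifies it with $M$. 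By the implicit function theorem this persists for small $\lambda$; and since by the compactness below every element of $\cu(H_\lambda,J_\lambda)$ is $C^0$-close to a constant when $\lambda$ is small, $\ev_{(0,0)}\colon\cu(H_\lambda,J_\lambda)\to M$ is a proper local diffeomorphism onto the connected manifold $M$ with a single sheet --- a diffeomorphism --- for all sufficiently small $\lambda$.

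Now set $\Lambda:=\{\lambda\in[0,1]\mid m\in\ev_{(0,0)}(\cu(H_\lambda,J_\lambda))\}$; by the previous step $[0,\varepsilon)\subseteq\Lambda$ for some $\varepsilon>0$. The parameterized space $\widehat\cu:=\{(\lambda,u)\mid\lambda\in[0,1],\ u\in\cu(H_\lambda,J_\lambda)\}$ is compact: the energy identity~\eqref{act-en} yields the bound~\eqref{unifen}, which is uniform over the compact parameter family, and then the bubbling-off analysis (using $[\omega]|_{\pi_2(M)}=0$), elliptic bootstrapping, and lower semicontinuity of the energy --- i.e.\ the proof of Proposition~\ref{comp} read for $s$-dependent, convergent data --- show that $\widehat\cu$ is closed and bounded in $[0,1]\times C^\infty(\R\times\T,M)$. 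Hence $\widehat\cu\cap\{u(0,0)=m\}$ is compact, so $\Lambda$, being its image under the projection to $[0,1]$, is closed. Finally one argues $\Lambda=[0,1]$ by a degree/continuation argument: the evaluation map has ``degree $1$'' at the transversal end $\lambda=0$ (it is a diffeomorphism there), and this degree is a bordism invariant of the compact parameterized moduli space $\widehat\cu$ together with its evaluation $\widehat\cu\to[0,1]\times M$; hence $\ev_{(0,0)}$ is surjective onto $M$ on every slice, in particular on the slice $\lambda=1$, which is the assertion.

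The hard part is this last step. For general $(H,J)$ the moduli space $\cu(H,J)$ need not be a manifold, so ``degree'' must be interpreted with care: one option is to phrase it \v{C}ech-cohomologically, tracking the pullback of the fundamental class of $M$ along the compact family $\widehat\cu$ and using the continuity of \v{C}ech cohomology together with properness of $\widehat\cu\to[0,1]$; another is to perturb $(H_1,J_1)$ slightly so that $\cu$ is a manifold, run the classical cobordism/degree argument there, and recover a solution for the original $(H,J)$ as a $C^\infty_{\loc}$-limit via the compactness of Proposition~\ref{comp}. Either way, the only Fredholm input needed is the transversality of the moduli space at the constant solutions for $(0,J_0)$, which is elementary; the remaining ingredients --- the translation reduction, the identification of $\cu(0,J_0)$ via asphericity, and the compactness of the parameterized moduli space --- are already at hand.
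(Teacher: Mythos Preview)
Your overall strategy---homotope the data to $(0,J_0)$, observe that the evaluation is a bijection there, and propagate surjectivity by a degree/continuation argument using compactness---is exactly the strategy of the paper. The difference, and the gap in your execution, is the choice of domain.

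You work directly on the infinite cylinder $\R\times\T$. Your Fredholm claim (``the linearization \dots\ is the ordinary Cauchy--Riemann operator on $\C^n$-valued maps of the cylinder, which is onto with kernel the constant sections'') is not correct as stated: on $W^{1,p}(\R\times\T,\C^n)$ the constants do not lie in the domain, and the zero Fourier mode in $t$ prevents $\bar\partial$ from being Fredholm at all. Making sense of this requires either weighted Sobolev spaces and a Morse--Bott analysis of the degenerate asymptotics (the ``critical manifold'' at each end is all of $M$), or closing up to a sphere via removal of singularities. You invoke the latter to identify $\cu(0,J_0)$, but then your implicit-function-theorem step for small~$\lambda$ cannot be carried out on the sphere: for $\lambda>0$ your Hamiltonian $H_\lambda=\lambda H$ is \emph{nonzero} at the ends, the asymptotics are $1$-periodic orbits of $\lambda H^{\pm}$ rather than points, and there is no removable singularity. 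So the Banach-manifold framework in which your IFT lives is never specified, and any honest specification drags in exactly the Floer-theoretic machinery (exponential decay, asymptotic operators, weighted spaces) that the paper is trying to avoid.

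The paper's fix is to cap the finite cylinder $[-T,T]\times\T$ with two disks on which the Hamiltonian is homotoped to zero, obtaining a compact sphere~$S_T$ and an honest index-$0$ Fredholm problem (once the point constraint $u(z)=m$ is imposed). On~$S_T$ the homotopy to $(0,J_0)$ and the abstract degree argument (Theorem~\ref{abstract}) go through without any asymptotic analysis; only at the very end does one let $T\to\infty$ and use Gromov compactness to extract $u\in\cu(H,J)$ with $u(z)=m$. This capping trick is the missing ingredient that makes your ``rudimentary Fredholm theory'' genuinely rudimentary.
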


\begin{proof}[Outline of the proof.] The proof uses arguments from \cite[\S 6.4]{hz94}. 
Given a large positive number $T>0$, we can glue two disks to the cylinder $[-T,T]\times \T$ 
and obtain a sphere~$S_T$. The Floer equation for the pair $(H,J)$ on $[-T,T]\times \T$ can be extended 
to the two capping disks by homotoping the Hamiltonian to zero and by extending~$J$ by $J^-$ respectively~$J^+$ 
(see \cite[p.\ 231]{hz94}). This leads to spaces $\cu_T(H,J,z,m)$ of solutions~$u$ of this Floer equation on~$S_T$ with the property that $u(z)=m$. By the same argument sketched in the proof of Proposition~\ref{comp}, 
this space is compact in $C^{\infty}(S_T,M)$. It suffices to show that $\cu_T(H,J,z,m)$ is not empty 
for all large~$T$, since then any sequence $u_n \in \cu_{T_n}(H,J,z,m) $ with $T_n \rightarrow \infty$ 
has a subsequence which converges on compact sets to some $u \in \cu(H,J)$ such that $u(z)=m$, 
again by the usual compactness argument.

The space of solutions $\cu_T(H,J,z,m)$ can be seen as the set of zeroes of a smooth section of a suitable smooth Banach bundle 
$\pi \colon E \rightarrow B$. Here, $B$ is the Banach manifold of $W^{1,p}$ maps from $S_T$ to~$M$ mapping $z$ to $m$, 
where $2<p<\infty$, and the fiber of~$E$ at $u \in B$ is a Banach space of $L^p$ sections. 
By homotoping the Hamiltonian~$H$ to zero and the $S_T$-dependent $\omega$-compatible almost complex structure~$J$ 
to an $S_T$-independent one~$J_0$, we obtain a smooth 1-parameter family of smooth sections 
\[
S \colon [0,1]\times B \rightarrow E
\]
such that $\cu_T (H,J,z,m)$ is the set of zeros of $S(1,\cdot)$, while the zeros of $S(0,\cdot)$ are 
$J_0$-holomorphic spheres $u \colon S_T \rightarrow M$ such that $u(z)=m$. The assumption that $[\omega]$ vanishes 
on~$\pi_2(M)$ guarantees that the only zero of $S(0,\cdot)$ is the map that is constantly equal to~$m$. The usual compactness argument implies that the inverse image $S^{-1}(0_E)$ of the zero-section~$0_E$ of~$E$ under~$S$ 
is compact in $[0,1]\times B$. Moreover, the Fredholm results from \cite[Appendix~4]{hz94} imply that 
for each $(t,u)$ in $S^{-1}(0_E)$ the fiberwise differential of $S(t,\cdot)$ at~$u$ is a Fredholm operator 
of index~0. Finally, the fiberwise differential of $S(0,\cdot)$ at the unique zero $u\equiv m$ is an isomorphism 
(see \cite[Appendix 4, Theorem 8]{hz94}). Therefore, the section~$S$ satisfies all the assumptions of 
Theorem~\ref{abstract} in the appendix, from which we conclude that $S(1,\cdot)$ has at least one zero.
\end{proof}

\begin{rem} \label{existence}
{\rm
Note that Propositions \ref{comp} and \ref{surj} imply that for any $H\in C^{\infty}(\T\times M)$ 
the Hamiltonian vector field~$X_H$ has 1-periodic orbits. Indeed, Proposition~\ref{surj} implies that 
$\cu(H,J)$ is not empty and Proposition~\ref{comp}~(ii) then gives the existence of a 1-periodic orbit.
}
\end{rem}

\begin{rem}
\label{rich}
{\rm
By arguing as in \cite[\S 6.4]{hz94} more can be proved: Given $z \in \R \times \T$, denote by
\[
\ev_{\-2 z} \colon C^{\infty}(\R \times \T,M) \to M, \qquad \ev_{\-2 z}(u) := u(z)
\]
the evaluation map at $z$. 
Denote by $\check{H}^*$ the Alexander--Spanier cohomology functor with $\Z_2$-coefficients. 
Then the restriction of $\ev_{\-2 z}$ to $\cu(H,J)$ induces an injective homomorphism 
in cohomology:
\begin{equation} \label{e:ASinj}
\bigl( \ev_{\-2 z} |_{\cu(H,J)} \bigr)^*  \colon 
H^*(M) \cong \check{H}^*(M) \to \check{H}^* \bigl( \cu(H,J) \bigr).
\end{equation}
See \cite[\S 3.3]{Ha18}.
This fact in particular implies that the restriction of $\ev_{\-2 z}$ to $\cu(H,J)$ is surjective, 
i.e., Proposition~\ref{surj} holds.
In the case of an $s$-independent Hamiltonian, the injectivity of the map~\eqref{e:ASinj}
leads to the proof of the degenerate Arnol'd conjecture for closed symplectically aspherical manifolds, 
see \cite[Chapter 6]{hz94}.
}
\end{rem}

\section{Construction of an action selector} \label{s:minimal}

Let $H \in C^{\infty}(\T\times M)$ be a Hamiltonian. We would like to define an action selector for~$H$.

\subsection{The definition} \label{ss:def}

Denote by $\ck (M)$ the set of functions $K \in C^{\infty}(\R \times \T \times M)$ 
such that $\partial_s K$ has compact support 
and by $\cj_\omega(M)$ the set of smooth families $J = \{J^s\}$ of $\omega$-compatible almost complex structures 
on~$M$ such that $\partial_s J$ has compact support. 
Let $\ck(H)$ be the subset of those $K \in \ck(M)$ for which $K^-=H$,
and abbreviate $\cd (H) = \ck(H) \times \cj_\omega(M)$.
For $(K,J) \in \cd(H)$ let $\cu(K,J)$ be the space of finite energy solutions of Floer's equation~\eqref{floer-s}
defined by $K$ and~$J$. 
Let $(K,J) \in \cd(H)$ and assume that $\partial_s K$ and $\partial_s J$ are supported in $[s^-,s^+] \times \T \times M$. 
If $u \in \cu(K,J)$, then on $(-\infty,s^-]$ the function $s \mapsto \A_H(u(s))$ is non-increasing and bounded. 
Therefore, the function
\[
a^-_H \colon \cu(K,J) \to \R, \qquad 
a^-_H (u) \,:=\, \lim_{s\to -\infty} \A_H(u(s)) = \sup_{s \in (-\infty,s^-]} \A_H(u(s)) ,
\]
is well-defined. Being the supremum of a family of continuous functions, 
the function~$a^-_H$ is lower semi-continuous. As such, it has a minimum on the compact space~$\cu(K,J)$.

\begin{defn} \label{def:AA}
Let $H \in C^{\infty}(\T\times M)$ and $(K,J) \in \cd(H)$. We set
\[
A^-(K,J) := \min_{u \in \cu(K,J)} a_H^-(u), \qquad A(H) := \sup_{(K,J) \in \cd(H)} A^-(K,J).
\]
\end{defn}
It follows from Proposition \ref{spec} below that $A(H)$ is finite.

\begin{figure}[h]   
 \begin{center}
  \psfrag{s}{$s$}  \psfrag{s-}{$s^-$}  \psfrag{s+}{$s^+$}  \psfrag{H}{$K^- \!=H$}  \psfrag{K+}{$K^+$}  \psfrag{K}{$K$}
  \leavevmode\includegraphics{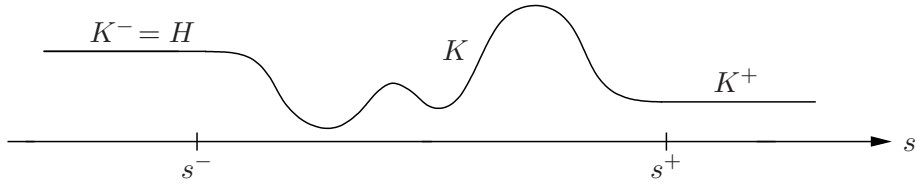}
 \end{center}
  \caption{A function $K$ deforming $H$, for $(t,x)$ fixed}   \label{fig.K}
\end{figure}

\subsection{First properties}  \label{ss:first}

Denote by
\[
\spec (H) := \{ \A_H(x) \mid x\in \mathscr{P}(H)\}
\] 
the set of critical values of $\A_H$. This set is compact, since $\mathscr{P}(H)$ is compact in~$C^{\infty}(\T,M)$ and $\A_H$ is continuous on $C^{\infty}(\T,M)$.  

Note that the number $A^-(K,J)$ belongs to $\spec (H)$. Indeed, take $u \in \cu (K,J)$ such that 
$a_H^-(u) = A^-(K,J)$. By Proposition~\ref{comp}~(ii), we find
$v$ in $\alpha$-$\lim (u)$, and $v$ is of the form $v(s,t)=x(t)$ with $x \in \cp(H)$ and $\A_H(x) = a_H^-(u)$. Hence $A^-(K,J)$ is a critical value of~$\A_H$.

Since $\spec (H)$ is compact, the supremum $A(H)$ is also a critical value of~$\A_H$. 
Therefore, we have proved the following result.

\begin{prop}[\bf Spectrality]
\label{spec}
$A(H)$ belongs to $\spec (H)$.
\end{prop}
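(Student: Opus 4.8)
The statement to prove is Proposition~\ref{spec}: that $A(H) \in \spec(H)$. The plan is to proceed exactly along the lines already sketched in the surrounding text, establishing the claim in two stages: first that each $A^-(K,J)$ lies in $\spec(H)$, and then that the supremum over $(K,J)$ does too, using compactness of $\spec(H)$.

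First I would verify that $A^-(K,J) \in \spec(H)$ for every fixed $(K,J) \in \cd(H)$. Since $a^-_H$ is lower semi-continuous on the compact space $\cu(K,J)$ (as noted in Section~\ref{ss:def}, being a supremum of continuous functions), it attains its minimum; choose $u \in \cu(K,J)$ with $a^-_H(u) = A^-(K,J)$. By Proposition~\ref{comp}~(ii), the set $\alpha\text{-}\lim(u)$ is non-empty, so pick $v \in \alpha\text{-}\lim(u)$. The same proposition tells us that $v$ is a trivial cylinder $v(s,t) = x(t)$ for some $x \in \cp(H)$ (here using $K^- = H$, so $H^- = H$) with $\A_H(x) = \lim_{s \to -\infty} \A_H(u(s)) = a^-_H(u) = A^-(K,J)$. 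Hence $A^-(K,J) = \A_H(x) \in \spec(H)$.

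Next I would promote this to $A(H)$ itself. By definition $A(H) = \sup_{(K,J) \in \cd(H)} A^-(K,J)$, a supremum of elements of $\spec(H)$. Since $\cp(H)$ is a compact subset of $C^\infty_{\mathrm{contr}}(\T,M)$ and $\A_H$ is continuous there, $\spec(H)$ is a compact (hence closed and bounded) subset of $\R$; this is recorded at the start of Section~\ref{ss:first}. Boundedness of $\spec(H)$ gives $A(H) < \infty$, and closedness of $\spec(H)$ gives $A(H) \in \spec(H)$, since the supremum of a bounded subset of a closed set is a limit point of that set and therefore belongs to it.

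There is no real obstacle here; the proposition is essentially a bookkeeping consequence of the compactness built into Proposition~\ref{comp}. The only point requiring slight care is ensuring $\cu(K,J)$ is non-empty so that the minimum defining $A^-(K,J)$ is taken over a non-empty set — but this is exactly Proposition~\ref{surj} (or Remark~\ref{existence}). Everything else is the elementary topology of compact subsets of $\R$.
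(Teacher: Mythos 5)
Your proposal is correct and follows the paper's own argument exactly: show $A^-(K,J)\in\spec(H)$ via a minimizer of $a_H^-$ and Proposition~\ref{comp}~(ii), then pass to the supremum using compactness (closedness and boundedness) of $\spec(H)$. Your added remark that non-emptiness of $\cu(K,J)$ comes from Proposition~\ref{surj} is a worthwhile point of care that the paper handles by stating that proposition beforehand.
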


Two very simple properties of the action selector $A$ are:
\begin{eqnarray}
\label{zero}
&A(H) = 0 \qquad & \mbox{if } H \equiv 0,\\
\label{e:r}
&A(H+r) =A(H)+\int_\T r(t)\,dt \qquad &
\forall \2 r \in C^\infty(\T), \; H \in C^{\infty}(\T\times M).
\end{eqnarray}
Indeed, the first property follows from the fact that for the Hamiltonian~$H \equiv 0$, 
the set~$\cp(H)$ consists of all the constant loops, which have action zero. 
The second property follows from the identities $\ck(H+r) = \ck(H)+r$ and
$a_{H+r}^-=a_H^- + \int_\T r(t)\,dt$. 
Less trivial is the following crucial result:

\begin{prop}[\bf Monotonicity]
\label{mon}
If $H_0,H_1 \in C^{\infty}(\T\times M)$ are such that 
$$
\int_{\T} \max_{x \in M}\, \bigl( H_1(t,x) - H_0(t,x) \bigr)\, dt \,\leq\, 0,
$$ 
then $A(H_1) \leq A(H_0)$. 
\end{prop}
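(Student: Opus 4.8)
The strategy is to compare the two deformation spaces $\cd(H_0)$ and $\cd(H_1)$ by interpolating between $H_1$ and $H_0$. Given $(K_0,J) \in \cd(H_0)$, I want to produce $(K_1,J) \in \cd(H_1)$ with $A^-(K_1,J) \geq A^-(K_0,J) + c$ for a suitable constant $c$; taking suprema would then give $A(H_0) \geq A(H_1) + c$, and one checks $c \geq 0$ under the stated hypothesis (after reducing to the normalized case $c=0$ via property~\eqref{e:r}). Concretely, fix a cutoff $\beta \colon \R \to [0,1]$ with $\beta(s) = 1$ for $s$ near $-\infty$ and $\beta(s)=0$ for $s$ near $+\infty$, and set
\[
K_1(s,t,x) \,:=\, \beta(s)\, H_1(t,x) + \bigl(1-\beta(s)\bigr)\, H_0(t,x) + \bigl(K_0(s,t,x) - H_0(t,x)\bigr).
\]
Then $K_1^- = H_1$, so $(K_1,J) \in \cd(H_1)$, and $K_1^+ = K_0^+$, so the positive asymptotics are unchanged; moreover $\partial_s K_1$ still has compact support. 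The point is that $K_1$ is obtained from $K_0$ by a deformation supported near $s=-\infty$ that changes $H_1$ into $H_0$ monotonically in $s$.

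The key step is to control $a^-_{H_1}(u)$ for $u \in \cu(K_1,J)$ in terms of the actions along the flow line. Using the energy identity~\eqref{act-en} for the equation~\eqref{floer-s} with the pair $(K_1,J)$, together with $E(u) \geq 0$, one gets that $s \mapsto \A_{K_1(s,\cdot,\cdot)}(u(s))$ decreases up to the correction term $-\int \partial_s K_1$. Since on $(-\infty, s^-]$ we have $K_1(s,\cdot,\cdot) = H_1 + (K_0(s,\cdot,\cdot) - H_0)$, and in the interpolation region $\partial_s K_1 = \beta'(s)(H_1 - H_0) + \partial_s K_0$, the extra term coming from the interpolation is
\[
-\int_{\R \times \T} \beta'(s)\, \bigl(H_1(t,u(s,t)) - H_0(t,u(s,t))\bigr)\, ds\, dt \,\leq\, -\int_{\R} \beta'(s)\, ds \;\cdot\; \int_{\T} \max_{x\in M}\bigl(H_1(t,x)-H_0(t,x)\bigr)\, dt,
\]
where I used $\beta' \leq 0$; the first factor equals $1$. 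By hypothesis this is $\leq 0$, so the interpolation does not increase the limiting action at $-\infty$ relative to what one would get for $K_0$. Tracking the asymptotic limits then yields $a^-_{H_1}(u) \leq a^-_{H_0}(u')$ for a corresponding solution, or more directly a lower bound on $A^-(K_1,J)$.

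There is an asymmetry to resolve: the correspondence $(K_0,J) \mapsto (K_1,J)$ does not identify $\cu(K_0,J)$ with $\cu(K_1,J)$ solution-by-solution, so the cleanest route is probably to argue in the other direction — start from $(K_1,J) \in \cd(H_1)$, build $(K_0,J) \in \cd(H_0)$ by the analogous interpolation running $H_0$ into $H_1$, and show $A^-(K_0,J) \geq A^-(K_1,J)$. Indeed for any $u \in \cu(K_0,J)$ one has $u \in \cu(K_0,J)$ with $a^-_{H_0}(u) = \lim_{s\to-\infty}\A_{H_0}(u(s))$; applying the energy identity on all of $\R$ and comparing with the value at any fixed large $s$ where $K_0 = K_1$, the monotonicity hypothesis forces $a^-_{H_0}(u) \geq$ (the corresponding $H_1$-action), whence $\min_u a^-_{H_0}(u) \geq A^-(K_1,J)$. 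Taking the supremum over $(K_1,J) \in \cd(H_1)$ and noting that every such $(K_1,J)$ arises this way (or simply that the constructed $(K_0,J)$ ranges over a subset of $\cd(H_0)$ sufficient for the estimate) gives $A(H_0) \geq A(H_1)$.

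The main obstacle I anticipate is bookkeeping the asymptotics correctly: one must make sure that the interpolation region is inserted \emph{to the left} of the support of $\partial_s K_0$ (or handle the overlap), so that near $-\infty$ the equation really is the autonomous Floer equation for $H_0$ resp. $H_1$, and that the $\alpha$-limit description from Proposition~\ref{comp}~(ii) applies verbatim to identify $a^-$ with a genuine action value. Once the cutoff is placed so that $\partial_s K_0$ and $\beta'$ have disjoint supports, the energy identity splits cleanly into the ``$K_0$-part'' and the ``interpolation part,'' and the sign of $\beta'$ together with the hypothesis $\int_\T \max_x(H_1-H_0)\,dt \leq 0$ closes the estimate. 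The compactness of $\cu(K,J)$ and lower semicontinuity of $a^-_H$ (already established) guarantee all the minima exist, so no further analytic input is needed beyond Propositions~\ref{comp} and~\ref{surj}.
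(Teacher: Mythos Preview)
Your overall strategy --- start from $(K_1,J)\in\cd(H_1)$, glue in an interpolation from $H_0$ to $H_1$ on the far left to obtain $(K_0,J)\in\cd(H_0)$, and use the energy identity together with the sign of the cutoff derivative and the hypothesis $\int_{\T}\max_x(H_1-H_0)\,dt\leq 0$ --- is exactly the one in the paper. The first paragraph of your proposal, however, runs the logic in the wrong direction (it would yield $A(H_1)\geq A(H_0)$), and it is only in the third paragraph that you settle on the correct direction. More importantly, the argument as written has a genuine gap at the step
\[
a^-_{H_0}(u)\;\geq\;\text{``the corresponding $H_1$-action''}\quad\Longrightarrow\quad \min_{u\in\cu(K_0,J)} a^-_{H_0}(u)\;\geq\;A^-(K_1,J).
\]
What the energy identity actually gives you, for $u\in\cu(K_0,J)$ and the interpolation placed on $[\lambda,\lambda+1]$, is $a^-_{H_0}(u)\geq \A_{H_1}(u(s))$ for every $s$ in the window $[\lambda+1,0]$. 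This is a bound by the $H_1$-action of a \emph{loop}, not by $a^-_{H_1}$ of an element of $\cu(K_1,J)$. The map $u$ solves the $K_1$-equation only on $[\lambda+1,\infty)$; on $(-\infty,\lambda+1]$ it solves the $H_0$- or interpolation-equation, so $u$ (or any translate of it) is \emph{not} in $\cu(K_1,J)$, and there is no ``corresponding solution'' to invoke. The inequality $\A_{H_1}(u(s))\geq A^-(K_1,J)$ simply need not hold for a fixed placement of the interpolation.

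The paper closes this gap by a limiting argument that you have not identified: one lets the interpolation recede, $\lambda=\lambda_n\to -\infty$, and argues by contradiction with an $\varepsilon$-slack. If for every $\lambda$ the minimizer $u_\lambda\in\cu(K_0^\lambda,J)$ satisfied $a^-_{H_0}(u_\lambda)<A^-(K_1,J)-\varepsilon$, then by Gromov compactness (Proposition~\ref{comp}) the sequence $(u_{\lambda_n})$ subconverges in $C^\infty_{\loc}$ to some $u\in\cu(K_1,J)$, because on every compact set the equation for $K_0^{\lambda_n}$ eventually agrees with the equation for $K_1$. Passing to the limit in $a^-_{H_0}(u_{\lambda_n})\geq \A_{H_1}(u_{\lambda_n}(s))$ for fixed $s\leq 0$ and then sending $s\to-\infty$ gives $\liminf_n a^-_{H_0}(u_{\lambda_n})\geq a^-_{H_1}(u)\geq A^-(K_1,J)$, the desired contradiction. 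This compactness step --- producing an honest element of $\cu(K_1,J)$ as a limit of solutions for the shifted problems --- is the heart of the proof, and it is precisely what is missing from your ``whence''. The obstacle is not bookkeeping of supports; it is that a single fixed $\lambda$ cannot do the job.
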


\begin{proof} 
Fix $\varepsilon>0$. We shall prove that 
\begin{equation}
\label{mon1}
\sup_{(K_0,J_0) \in \cd (H_0)} \min_{\cu (K_0,J_0)} a_{H_0}^-  \,\geq\, 
\sup_{(K_1,J_1) \in \cd (H_1)} \min_{\cu (K_1,J_1)} a_{H_1}^- - \varepsilon,
\end{equation}
and the claim will follow from the arbitrariness of~$\varepsilon$. 
Proving~\eqref{mon1} is equivalent to showing that for every $(K_1,J_1)$ in~$\cd (H_1)$ 
there exists $(K_0,J_0)$ in $\cd (H_0)$ such that
\begin{equation} \label{mon2}
\min_{\cu (K_0,J_0)} a_{H_0}^- \,\geq\, \min_{\cu(K_1,J_1)} a_{H_1}^- - \varepsilon .
\end{equation}
Up to a translation, we may assume that 
\begin{equation} \label{e:K1H1}
K_1(s,t,x) \,=\, H_1(t,x)
\quad \mbox{and} \quad
J_1(s,t,x) \,=\, J_1^-(t,x) , 
\qquad \forall \2 s \leq 0.
\end{equation}
Let $\varphi \in C^{\infty}(\R)$ be a real function such that $\varphi' \geq 0$, 
$\varphi(s)=0$ for $s \leq 0$ and $\varphi(s)=1$ for $s\geq 1$. 
For $\lambda \in \R$ we define $K_0^{\lambda} \in \ck(H_0)$ by
\begin{equation} \label{e:fi}
K_0^{\lambda} (s,t,x) \,:=\, \varphi(s-\lambda) K_1(s,t,x) + \bigl( 1 - \varphi(s-\lambda) \bigr) H_0(t,x).
\end{equation}
We claim that there exists $\lambda \leq -1$ such that~\eqref{mon2} holds with $(K_0,J_0) = (K_0^{\lambda},J_1)$. 
Arguing by contradiction, we assume that for every $\lambda \leq -1$ there is a $u_{\lambda}$ 
in~$\cu(K_0^{\lambda},J_1)$ such that
\begin{equation} \label{mon3}
a_{H_0}^-(u_{\lambda}) \,<\, \min_{\cu(K_1,J_1)} a_{H_1}^- - \varepsilon.
\end{equation}
Let $(\lambda_n) \subset (-\infty,-1]$ be such that $\lambda_n \to -\infty$. 
By Proposition~\ref{comp}~(i), $\cu(K_1,J_1)$ is compact.
Arguing by a diagonal sequence argument,
we see that after replacing $(\lambda_n)$ by a subsequence, $(u_{\lambda_n})$ 
converges to some~$u$ in~$\cu(K_1,J_1)$.

\begin{figure}[h]   
 \begin{center}
  \psfrag{s}{$s$}  \psfrag{0}{$0$}  \psfrag{l}{$\lambda$}  \psfrag{l1}{$\lambda+1$}  \psfrag{H0}{$H_0$}  
  \psfrag{H1}{$H_1$}  \psfrag{K1}{$K_1$}
  \leavevmode\includegraphics{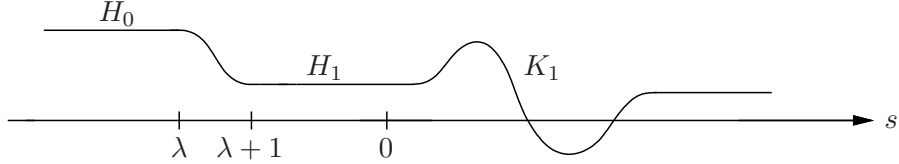}
 \end{center}
 \caption{The function $K_0^\lambda$, for $(t,x)$ fixed}  \label{fig.K0lambda}
\end{figure}

\noindent
We fix a number $s \leq 0$. 
If $\lambda_n \leq s-1$, then by~\eqref{e:K1H1} and the action-energy identity~\eqref{act-en},
\[
\begin{split}
& a_{H_0}^-(u_{\lambda_n}) \,\geq\, \A_{H_0} \bigl(u_{\lambda_n}(\lambda_n)\bigr)  
\\ 
& \,=\, \A_{H_1} \bigl(u_{\lambda_n}(s) \bigr) + \int_{[\lambda_n,s] \times \T} \bigl| \partial_\sigma u_{\lambda_n} \bigr|_{J_1^-}^2\, d\sigma\, dt -  \int_{[\lambda_n,s] \times \T} \varphi'(\sigma - \lambda_n) (H_1-H_0) (t,u_{\lambda_n}) \, d\sigma\, dt.
\end{split}
\]
By the hypothesis of the proposition and the fact that $\varphi'$ is non-negative we obtain the inequality
\[
\int_{[\lambda_n,s] \times \T} \varphi'(\sigma - \lambda_n) (H_1-H_0) (t,u_{\lambda_n}) \, d\sigma\, dt \leq \int_{[\lambda_n,s] \times \T} \varphi'(\sigma - \lambda_n) \max _{x\in M} (H_1-H_0) (t,x) \, d\sigma\, dt \leq 0,
\]
and hence the previous inequality gives us 
\[
a_{H_0}^-(u_{\lambda_n}) \geq \A_{H_1} \bigl( u_{\lambda_n}(s) \bigr).
\]
By taking the limit for $n\to \infty$, we deduce that
\[
\liminf_{n\to \infty} a_{H_0}^-(u_{\lambda_n}) \geq \A_{H_1}\bigl(u(s)\bigr),
\]
and by taking the supremum over all $s \leq 0$,
\[
\liminf_{n\to \infty} a_{H_0}^-(u_{\lambda_n}) \geq a_{H_1}^-(u).
\]
Together with \eqref{mon3}, this implies the chain of inequalities
\[
a_{H_1}^-(u) \leq \liminf_{n\to \infty} a_{H_0}^-(u_{\lambda_n}) \leq \min_{\cu(K_1,J_1)} a_{H_1}^- - \varepsilon,
\]
which is the desired contradiction because $u\in \cu(K_1,J_1)$.
\end{proof}

Monotonicity and property~\eqref{e:r} imply the following form of continuity.

\begin{prop} 
[\bf Lipschitz continuity]
\label{p:continuity}
For all $H_0, H_1 \in C^{\infty}(\T\times M)$ we have
$$
\int_\T \min_{x \in M}\, \bigl( H_1(t,x)-H_0(t,x) \bigr) \,dt \,\leq\, A(H_1)-A(H_0) 
\,\leq\, \int_\T \max_{x \in M}\, \bigl( H_1(t,x)-H_0(t,x) \bigr) \,dt .
$$
In particular, the action selector~$A$
is 1-Lipschitz with respect to the sup-norm on $C^{\infty}(\T\times M)$:
\[
\bigl| A(H_1) - A(H_0) \bigr| \,\leq\, 
\| H_1- H_0\|_\infty . 
\]
\end{prop}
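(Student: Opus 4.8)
The plan is to derive Proposition~\ref{p:continuity} directly from Monotonicity (Proposition~\ref{mon}) together with the affine-shift property~\eqref{e:r}, by a standard sandwiching argument. The key observation is that adding a time-dependent constant $r(t)$ to a Hamiltonian does not change its dynamics but shifts the action selector by $\int_\T r(t)\,dt$, so we can always ``straighten out'' the comparison between $H_0$ and $H_1$ into a situation where Monotonicity applies.

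First I would prove the upper bound $A(H_1)-A(H_0)\leq \int_\T \max_{x\in M}(H_1(t,x)-H_0(t,x))\,dt$. Set $r(t):=\max_{x\in M}\bigl(H_1(t,x)-H_0(t,x)\bigr)$, which is a smooth function of $t\in\T$ (smoothness of the max follows since $M$ is compact; if one wants to avoid any regularity discussion one can instead work with a smooth $r$ slightly above this max and let it decrease to it, but this is a routine point). Then by definition of $r$ we have $H_1(t,x)\leq H_0(t,x)+r(t)$ for all $(t,x)$, hence
\[
\int_\T \max_{x\in M}\bigl( H_1(t,x) - (H_0(t,x)+r(t)) \bigr)\,dt \,\leq\, 0 ,
\]
so Monotonicity gives $A(H_1)\leq A(H_0+r)$. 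Combining this with~\eqref{e:r}, which says $A(H_0+r)=A(H_0)+\int_\T r(t)\,dt$, yields exactly $A(H_1)\leq A(H_0)+\int_\T \max_{x\in M}(H_1-H_0)\,dt$, i.e.\ the claimed upper bound. The lower bound is obtained by interchanging the roles of $H_0$ and $H_1$: the upper bound applied to the pair $(H_1,H_0)$ gives $A(H_0)-A(H_1)\leq \int_\T \max_{x\in M}(H_0(t,x)-H_1(t,x))\,dt = -\int_\T \min_{x\in M}(H_1(t,x)-H_0(t,x))\,dt$, which rearranges to the lower bound.

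Finally, for the Lipschitz statement, observe that $\min_{x\in M}(H_1(t,x)-H_0(t,x))\geq -\|H_1-H_0\|_\infty$ and $\max_{x\in M}(H_1(t,x)-H_0(t,x))\leq \|H_1-H_0\|_\infty$ pointwise in $t$, so integrating over $\T$ (which has length $1$) and inserting into the two-sided bound just proved gives $|A(H_1)-A(H_0)|\leq \|H_1-H_0\|_\infty$. I do not anticipate a genuine obstacle here; the only mildly delicate point is the smoothness of $t\mapsto \max_{x\in M}(H_1(t,x)-H_0(t,x))$ needed to legitimately apply~\eqref{e:r}, and this is handled either by noting that Danskin-type arguments give enough regularity, or, more cheaply, by approximating from above by smooth functions and passing to the limit using that both sides of the desired inequality depend continuously on the added function.
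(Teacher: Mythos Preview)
Your proposal is correct and follows essentially the same route as the paper: define the pointwise-in-$t$ extremal shifts $c_{\pm}(t)$, sandwich $H_1$ between $H_0+c_-$ and $H_0+c_+$, and apply Monotonicity (Proposition~\ref{mon}) together with the shift property~\eqref{e:r}. The only cosmetic difference is that the paper proves both inequalities at once via the two-sided sandwich, whereas you prove the upper bound and then swap $H_0$ and~$H_1$; you are also more explicit than the paper about the (minor) regularity issue for $t\mapsto \max_{x\in M}(H_1-H_0)$ when invoking~\eqref{e:r}, and your smooth approximation from above resolves it cleanly.
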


\begin{proof}
Set
$$
c_-(t) \,=\, \min_{x \in M} \bigl( H_1(t,x)-H_0(t,x) \bigr), \quad
c_+(t) \,=\, \max_{x \in M} \bigl( H_1(t,x)-H_0(t,x) \bigr).
$$
Then 
$$
H_0(t,x) + c_-(t) \,\leq\, H_1(t,x) \,\leq\, H_0(t,x) + c_+(t), \qquad \forall \2 t \in \T,\, x \in M.
$$
Applying Proposition~\ref{mon} and~\eqref{e:r} we obtain
$$
A(H_0) + \int_\T c_-(t)\,dt \,\leq\, A(H_1) \,\leq\, A(H_0) + \int_\T c_+(t)\,dt
$$
as we wished to prove.
\end{proof}

\subsection{An equivalent definition} \label{ss:equivalent} 

By now, we know that our action selector~$A$ is spectral, monotone, and continuous. 
These properties already imply many further properties, see Proposition~\ref{p:formal} below,
and results like the unboundedness of Hofer's metric, see Section~\ref{ss:Hofer}. 
For most applications of an action selector, such as the non-squeezing theorem or the (almost)
existence of closed characteristics, one also needs that the selector is negative on functions
that are non-positive and somewhere negative.
To prove this property for our selector~$A$ we shall describe~$A$
by a minimax in which the space~$\cu (K,J)$ is replaced by 
a certain space of solutions of the Floer equation for~$H$.

Recall that $(\tau_{\sigma} u)(s) := u(\sigma+s)$.
Given $(K,J) \in \cd(H)$, consider the set 
\[
\cu_{\ess}(K,J) \,:=\, \set{ u \in C^{\infty}(\R\times \T,M)}{u = \lim_{n\to \infty} \tau_{s_n} u_n \mbox{ where } s_n \to -\infty \mbox{ and } (u_n) \subset \cu(K,J) }.
\]

\begin{ex}
Assume that neither $H$ nor $J$ depend on $s$. Then $\cu_{\ess}(H,J) = \cu (H,J)$.
\end{ex}

\proof
The inclusion $\cu_{\ess}(H,J) \subset \cu (H,J)$ holds because if $u_n$ belongs to $\cu(H,J)$ then also 
$(\tau_{s_n}u_n)$ does, and hence also $u = \lim_{n\to \infty} \tau_{s_n} u_n$ is in the same space, 
since $\cu(H,J)$ is closed.
Moreover, the inclusion $\cu(H,J) \subset \cu_{\ess} (H,J)$ holds because for $u \in \cu (H,J)$ we have 
$u_n := \tau_n u \in \cu (H,J)$
and $\displaystyle \lim_{n \to \infty} \tau_{-n}(u_n) =u$.
\proofend

As we shall see in Proposition~\ref{p:inv}, $\cu_{\ess}(K,J)$ is a compact $\tau$-invariant subspace of~$\cu(H,J^-)$. 
The space $\cu_{\ess} (K,J)$ is therefore
the space of those cylinders in~$\cu(H,J^-)$ which are essential with respect to~$K$, 
in the sense that they survive through the homotopy~$K$. 
We shall prove that the action selector
\[
A(H) \,=\, \displaystyle \sup_{(K,J) \in \cd(H)} \min_{\cu(K,J)} a_H^- \quad
\]
can be expressed as
\begin{equation} \label{eqdef}
A(H) \,=\, \sup_{(K,J) \in \cd(H)} \min_{\cu_{\ess}(K,J)} a_H,
\end{equation}
where $a_H$ is the continuous function
\[
a_H \colon C^{\infty}(\R\times \T,M) \rightarrow \R, \qquad a_H(u) := \A_H(u(0)).
\]
We begin with the following result.

\begin{prop} \label{p:inv}
The set $\cu_{\ess}(K,J)$ is a compact $\tau$-invariant subspace of $\cu(H,J^-)$. 
For every $z \in \R\times \T$ and $m\in M$ there exists $u \in \cu_{\ess}(K,J)$ such that $u(z)=m$.
\end{prop}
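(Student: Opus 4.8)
The plan is to establish the three assertions in order: first that $\cu_{\ess}(K,J) \subset \cu(H,J^-)$, then that it is compact and $\tau$-invariant, and finally the evaluation-surjectivity statement. For the first claim, I would take $u = \lim_{n\to\infty} \tau_{s_n} u_n$ with $s_n \to -\infty$ and $u_n \in \cu(K,J)$, and run the same limiting argument used in the outline of Proposition~\ref{comp}~(ii): since $\partial_s K$ and $\partial_s J$ have compact support, for any fixed compact interval $[-T,T]$ and $n$ large the shifted data $\tau_{s_n} K$ and $\tau_{s_n} J$ agree with $K^- = H$ and $J^-$ on $[-T,T]\times\T$, so $\tau_{s_n} u_n$ solves the $s$-independent Floer equation for $(H,J^-)$ there; passing to the $C^\infty_{\loc}$-limit shows $u$ solves that equation on all of $\R\times\T$. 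The finite-energy bound is inherited via lower semicontinuity of the energy together with the uniform energy bound~\eqref{unifen} for elements of $\cu(K,J)$, exactly as in the proof of Proposition~\ref{comp}. Hence $u \in \cu(H,J^-)$.

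For $\tau$-invariance: if $u = \lim_n \tau_{s_n} u_n$ then $\tau_\sigma u = \lim_n \tau_{s_n + \sigma} u_n$ with $s_n + \sigma \to -\infty$ and the same sequence $(u_n) \subset \cu(K,J)$, so $\tau_\sigma u \in \cu_{\ess}(K,J)$. For compactness: $\cu_{\ess}(K,J)$ is a subset of the compact space $\cu(H,J^-)$ (Proposition~\ref{comp}~(i)), so it suffices to show it is closed. A limit of elements of $\cu_{\ess}(K,J)$ is a limit of elements of the form $\tau_{s_n} u_n$; using metrizability of the $C^\infty_{\loc}$-topology and a diagonal argument — pick $s_n$ more and more negative and $u_n \in \cu(K,J)$ approximating the $n$-th given element well enough — one exhibits the limit itself in the form $\lim_n \tau_{s_n} u_n$, hence in $\cu_{\ess}(K,J)$.

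For the evaluation statement, fix $z = (s_0, t_0) \in \R\times\T$ and $m \in M$. By Proposition~\ref{surj}, for each $n \in \N$ there is $u_n \in \cu(K,J)$ with $u_n(s_0 - (-n), t_0) = u_n(s_0 + n, t_0) = m$; equivalently $(\tau_n u_n)(z) = m$. By compactness of $\cu(H,J^-)$ (the translates $\tau_n u_n$ lie in $\cu_{\ess}(K,J)$ once we know the set is $\tau$-invariant and contains $\cu(K,J)$, or simply apply Proposition~\ref{comp}~(i) after the first part), a subsequence of $(\tau_n u_n)$ converges in $C^\infty_{\loc}$ to some $u$; since $s_n := n \to \infty$, rather $-n \to -\infty$ — more precisely one shifts so that the "anchor" recedes to $-\infty$ — the limit $u$ lies in $\cu_{\ess}(K,J)$ by definition, and continuity of the evaluation map $\ev_z$ gives $u(z) = \lim_n (\tau_n u_n)(z) = m$.

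The main obstacle I anticipate is purely bookkeeping rather than conceptual: organizing the diagonal/subsequence arguments so that the "translation parameter tends to $-\infty$" condition in the definition of $\cu_{\ess}(K,J)$ is genuinely met at each stage (in particular in the closedness proof, where one must choose the auxiliary $s_n$ negative enough relative to where $\partial_s K$, $\partial_s J$ are supported, and in the evaluation argument, where one must correctly match the sign convention so that $z$ is pushed into the region where the data is already $s$-independent). All the analytic input — the $C^\infty_{\loc}$-convergence of shifted solutions to solutions of the limiting $s$-independent equation, the uniform energy bound, lower semicontinuity of energy, and compactness — is already available from Propositions~\ref{comp} and~\ref{surj} and their proofs, so no new estimates are required.
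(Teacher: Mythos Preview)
Your approach is essentially identical to the paper's, and all the analytic input you cite is exactly what is used there. The one genuine slip --- which you yourself flag as the likely trouble spot --- is the sign in the evaluation argument: with your choice $u_n(s_0+n,t_0)=m$ and shifts $\tau_n$, the translation parameter tends to $+\infty$, so on any compact window the shifted maps $\tau_n u_n$ eventually solve the Floer equation for $(K^+,J^+)$ rather than $(H,J^-)$, and the limit need not lie in $\cu_{\ess}(K,J)$ at all. The paper's fix (and the correct bookkeeping) is to choose $u_n \in \cu(K,J)$ with $u_n(s_0-n,t_0)=m$ and consider $\tau_{-n}u_n$; then $(\tau_{-n}u_n)(z)=u_n(s_0-n,t_0)=m$, the shift $-n\to-\infty$ matches the definition of $\cu_{\ess}(K,J)$, and the rest goes through exactly as you describe.
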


\begin{proof}
The inclusion $\cu_{\ess}(K,J) \subset \cu(H,J^-)$ is shown in the same way as the inclusion 
$\alpha$-$\lim (u) \subset \cu (H,J^-)$ in Proposition~\ref{comp}~(ii):
Let $u=\lim \tau_{s_n} u_n$ be an element of $\cu_{\ess}(K,J)$.
Since $v=\tau_{s_n} u_n$ solves the equation
\[
\partial_s v + (\tau_{s_n}J)(v) \bigl( \partial_t v - X_{\tau_{s_n} K}(s,t,v) \bigr) = 0,
\]
and since $\tau_{s_n} K$ converges to $K^-=H$ and $\tau_{s_n}J$ converges to~$J^-$, the map~$u$ 
is a solution of the $s$-independent Floer equation defined by~$H$ and~$J^-$. 
Moreover,
\[
E_{J^-}(u) \,\leq\, \liminf_{n \to \infty} E_{\tau_{s_n}J}(\tau_{s_n} u_n) \,=\, \liminf_{n\to \infty} E_J(u_n) \,\leq\, 
                      \sup_{v \in \cu(K,J)} E_J(v) \,<\, +\infty,
\]
where the finiteness of the last supremum follows from \eqref{unifen}.
Therefore, $\cu_{\ess}(K,J) \subset \cu(H,J^-)$.
If  $\sigma\in \R$, then
\[
\tau_{\sigma} u \,=\, \lim_{n\to \infty} \tau_{s_n + \sigma} u_n
\]
is in $\cu_{\ess}(K,J)$, which is therefore $\tau$-invariant. 
If
\[
v^h = \lim_{n \to \infty} \tau_{s_n^h} u_n^h, \qquad \mbox{where} \quad \lim_{n \to \infty} s_n^h = -\infty, \quad \forall \2 h \in \N,
\]
and $(v^h)$ converges to $v \in C^{\infty}(\R\times \T,M)$, then a standard diagonal argument implies the existence of 
a diverging sequence $(n_h) \subset \N$ such that
\[
\lim_{h \to \infty} \mathrm{dist}\, \bigl( \tau_{s_{n_h}^h} u_{n_h}^h,v^h \bigr) = 0, 
                                           \qquad \lim_{h\to \infty} s_{n_h}^h = -\infty,
\]
where $\mathrm{dist}$ is a distance on the metrizable space $C^{\infty}(\R\times \T,M)$. 
Therefore, $\tau_{s_{n_h}^h} u_{n_h}^h$ converges to~$v$, which hence belongs to~$\cu_{\ess}(K,J)$. 
This shows that $\cu_{\ess}(K,J)$ is a closed subspace of~$\cu(H,J^-)$. 
Since $\cu(H,J^-)$ is compact, so is $\cu_{\ess}(K,J)$.

Finally, given $z=(s,t)\in \R\times \T$, $m \in M$ and $n \in \N$, by Proposition \ref{surj} we can find 
$u_n \in \cu(K,J)$ such that $u_n(s-n,t)=m$. By compactness, a subsequence of $\tau_{-n} (u_n)$ converges to 
some $u\in \cu_{\ess}(K,J)$. Since
\[
\tau_{-n}(u_n)(z) = u_n(s-n,t) = m,
\]
we conclude that $u(z)=m$. 
\end{proof}

\begin{rem} \label{rich-ess}
{\rm
Actually, one can show that the space $\cu_{\ess}(K,J)$ satisfies the property of Remark~\ref{rich}: 
The restriction of $\ev_{\-2 z}$ to $\cu_{\ess}(K,J)$ induces an injective homomorphism in cohomology:
\[
\bigl( \ev_{\-2 z} |_{\cu_{\ess}(K,J)} \bigr)^*  \colon 
H^*(M) \cong \check{H}^*(M) \to \check{H}^* \bigl( \cu_{\ess}(K,J) \bigr).
\]
See \cite[Proposition 4.3.4]{Ha18}.
}
\end{rem}

Formula \eqref{eqdef} is an immediate consequence of the following result.

\begin{prop}
\label{cara}
$\displaystyle \min_{\cu(K,J)} a_H^- \,=\, \min_{\cu_{\ess}(K,J)} a_H$.
\end{prop}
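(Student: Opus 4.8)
The plan is to prove the equality by establishing two inequalities, using the $\tau$-invariance of $\cu_{\ess}(K,J)$ together with the monotonicity of the action along Floer trajectories. Write $c := \min_{\cu(K,J)} a_H^-$ and $c_{\ess} := \min_{\cu_{\ess}(K,J)} a_H$. Both minima are attained: for the left side this was observed in Section~\ref{ss:def} (lower semicontinuity of $a_H^-$ on the compact space $\cu(K,J)$), and for the right side $a_H$ is continuous and $\cu_{\ess}(K,J)$ is compact by Proposition~\ref{p:inv}.

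For the inequality $c_{\ess} \le c$, I would take $u \in \cu(K,J)$ with $a_H^-(u) = c$ and produce from it an element of $\cu_{\ess}(K,J)$ with $a_H$-value $\le c$. Recall that $\partial_s K$ and $\partial_s J$ are supported in $[s^-,s^+]\times\T\times M$, so for $s \le s^-$ the function $s \mapsto \A_H(u(s))$ is non-increasing with $a_H^-(u) = \lim_{s\to-\infty}\A_H(u(s))$. Pick $s_n \to -\infty$ with $s_n \le s^-$ for all $n$; using the compactness of $\{\tau_s u \mid s \in \R\}$ (cf.\ the $\alpha$-limit argument in Proposition~\ref{comp}(ii)), pass to a subsequence so that $\tau_{s_n} u$ converges to some $v$, which then lies in $\cu_{\ess}(K,J)$ by definition (taking $u_n := u$ for all $n$). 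By Proposition~\ref{comp}(ii) applied to $u$, $v$ is a trivial cylinder with $\A_H(v(0)) = \lim_{s\to-\infty}\A_H(u(s)) = a_H^-(u) = c$. Hence $c_{\ess} \le a_H(v) = c$.

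For the reverse inequality $c \le c_{\ess}$, I would take $v \in \cu_{\ess}(K,J)$ with $a_H(v) = c_{\ess}$ and find $u \in \cu(K,J)$ with $a_H^-(u) \le c_{\ess}$. Write $v = \lim_{n\to\infty}\tau_{s_n} u_n$ with $s_n \to -\infty$ and $u_n \in \cu(K,J)$. Since $\cu_{\ess}(K,J)$ is $\tau$-invariant, for every $\sigma \in \R$ the cylinder $\tau_\sigma v$ also lies in it, and since $v$ solves the $s$-independent Floer equation for $(H,J^-)$, the function $\sigma \mapsto \A_H(v(\sigma)) = a_H(\tau_\sigma v)$ is non-increasing in $\sigma$; letting $\sigma \to -\infty$ gives $\sup_{\sigma\in\R}\A_H(v(\sigma)) = \lim_{\sigma\to-\infty}\A_H(v(\sigma)) =: a$, and in particular $a \ge a_H(v) = c_{\ess}$. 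I would argue that $a \le c_{\ess}$ as well — since $\tau_\sigma v \in \cu_{\ess}(K,J)$ for all $\sigma$ and $c_{\ess}$ is the minimum of $a_H$ over that set, we get $\A_H(v(\sigma)) = a_H(\tau_\sigma v) \ge c_{\ess}$, but this is the wrong direction, so instead I would exploit the defining sequence: for each fixed $\sigma \le 0$ and $n$ large (so that $s_n + \sigma \le s^-$), the action-energy identity for $u_n$ on $(-\infty, s_n+\sigma]$ gives $a_H^-(u_n) \ge \A_H(u_n(s_n+\sigma)) = \A_{\tau_{s_n}H}((\tau_{s_n}u_n)(\sigma)) \to \A_H(v(\sigma))$ as $n\to\infty$ (using $\tau_{s_n}K \to H$). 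Hence $\liminf_n a_H^-(u_n) \ge \A_H(v(\sigma))$ for every $\sigma \le 0$; taking $\sigma \to -\infty$ yields $\liminf_n a_H^-(u_n) \ge a \ge c_{\ess}$. But the reverse is what we need. Re-examining: the useful bound runs the other way — taking $\sigma = 0$ directly gives $\liminf_n a_H^-(u_n) \ge \A_H(v(0)) = c_{\ess}$, which combined with $a_H^-(u_n) \ge c$ for all $n$ (definition of $c$) is consistent but not yet conclusive. The correct route for $c \le c_{\ess}$ is instead: since each $u_n \in \cu(K,J)$ satisfies $a_H^-(u_n) \ge c$, pass (by compactness of $\cu(K,J)$) to a subsequence with $u_n \to u \in \cu(K,J)$; by lower semicontinuity of $a_H^-$, $a_H^-(u) \le \liminf_n a_H^-(u_n)$ — again the wrong direction.

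The main obstacle, as the above vacillation shows, is getting the right trajectory in $\cu(K,J)$ whose $a_H^-$ is controlled by $c_{\ess}$; the clean fix is to observe that the minimizer $u \in \cu(K,J)$ with $a_H^-(u) = c$ produces, via the first inequality's construction, a trivial cylinder $v_0 \in \cu_{\ess}(K,J)$ with $a_H(v_0) = c$, so $c_{\ess} \le c$; and conversely, starting from any $v \in \cu_{\ess}(K,J)$, since $v \in \cu(H,J^-)$ is a genuine $(H,J^-)$-Floer cylinder with $a_H^-(v) := \lim_{\sigma\to-\infty}\A_H(v(\sigma)) \ge \A_H(v(0)) = a_H(v)$, one shows $v$ must be trivial with $a_H^-(v) = a_H(v)$ — because $v = \lim \tau_{s_n} u_n$ and the action-energy identity forces the limiting action to be achieved with zero energy, exactly as in the proof of Proposition~\ref{comp}(ii). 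Then $a_H^-$ restricted to $\cu_{\ess}(K,J)$ equals $a_H$ there, and since every element of $\cu_{\ess}(K,J)$ arising as a limit of translates of $\cu(K,J)$-elements has its action bounded below by $c = \min_{\cu(K,J)} a_H^-$ (monotonicity of action along each $u_n$, preserved in the limit), we get $c_{\ess} = \min_{\cu_{\ess}} a_H = \min_{\cu_{\ess}} a_H^- \ge c$, completing the equality. I would present it cleanly in this order: (1) both minima are attained; (2) on $\cu_{\ess}(K,J)$ the functions $a_H$ and $a_H^-$ coincide because every such cylinder is trivial (the Proposition~\ref{comp}(ii) argument); (3) $c_{\ess} \le c$ via the trivial cylinder extracted from a minimizer of $a_H^-$ on $\cu(K,J)$; (4) $c \le c_{\ess}$ via the action bound passing to the limit along the defining sequence.
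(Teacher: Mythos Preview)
Your proof of the inequality $c_{\ess}\le c$ is fine and matches the paper: take a minimizer $u\in\cu(K,J)$ of $a_H^-$, extract a trivial cylinder $v\in\alpha\text{-}\lim(u)\subset\cu_{\ess}(K,J)$ with $a_H(v)=a_H^-(u)$.

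The reverse inequality $c\le c_{\ess}$, however, is not established. After several attempts that you yourself recognize go the wrong way, you settle on the claim (your step~(2)) that \emph{every} element of $\cu_{\ess}(K,J)$ is a trivial cylinder, so that $a_H=a_H^-$ on this space. This is false. The argument from Proposition~\ref{comp}(ii) proves triviality only for elements of $\alpha\text{-}\lim(u)$ for a \emph{fixed} $u$: there the key point is that $a_{H^-}(\tau_s u)$ is monotone in~$s$ and bounded, hence convergent, forcing the limit to have constant action. For a general $v=\lim_n \tau_{s_n}u_n$ with varying~$u_n$ there is no such monotone sequence to exploit. Indeed, the Example right before Proposition~\ref{p:inv} shows that when $H$ and $J$ are $s$-independent one has $\cu_{\ess}(H,J)=\cu(H,J)$, which in general contains plenty of non-trivial Floer cylinders. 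Your step~(4) then collapses, since it rests on the identity $\min_{\cu_{\ess}}a_H=\min_{\cu_{\ess}}a_H^-$ coming from~(2).

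The paper's argument for $c\le c_{\ess}$ is short and close to ideas you already wrote down but did not assemble correctly. Take a minimizer $v\in\cu_{\ess}(K,J)$ of $a_H$, write $v=\lim_n \tau_{s_n}u_n$, and---this is the step you missed---pass to a further subsequence so that $u_n\to u$ in the compact space $\cu(K,J)$. Now fix $s\le s^-$. For $n$ large, $s_n<s\le s^-$, and monotonicity of $\A_H(u_n(\cdot))$ on $(-\infty,s^-]$ gives
\[
\A_H(u_n(s))\;\le\;\A_H(u_n(s_n))\;=\;a_H(\tau_{s_n}u_n).
\]
Letting $n\to\infty$ yields $\A_H(u(s))\le a_H(v)$, and then $s\to -\infty$ gives $a_H^-(u)\le a_H(v)=c_{\ess}$, hence $c\le c_{\ess}$. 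The point is to compare $\A_H(u_n(s))$ with $\A_H(u_n(s_n))$, not with $a_H^-(u_n)$; the latter comparison (which you tried) goes the wrong way.
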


\begin{proof}
Let $u \in \cu(K,J)$ be a minimizer of $a_H^-$. 
By Proposition~\ref{comp} (ii) there exists $v \in \mbox{$\alpha$-$\lim(u)$}$ with $v(s,t)=x(t)$ 
for some $x \in \cp(H)$, and
\[
a_H(v) \,=\, \A_H(x) \,=\, a_H^-(u) .
\]
Since $v \in \alpha$-$\lim(u) \subset \cu_{\ess}(K,J)$, we conclude
\[
\min_{\cu_{\ess} (K,J)} a_H \leq a_H(v) \,=\, a_H^-(u) \,=\, \min_{\cu(K,J)} a_H^-.
\]
Conversely, let $v \in \cu_{\ess}(K,J)$ be a minimizer of $a_H$. Then
\[
v = \lim_{n\to \infty} \tau_{s_n} u_n, \qquad \mbox{where } s_n \to -\infty \,\mbox{ and }\, (u_n) \subset \cu(K,J).
\]
Up to a subsequence, we may assume that $(u_n)$ converges to some $u \in \cu(K,J)$. 
For every fixed~$s$ belonging to a half-line $(-\infty,s^-]$ on which $\partial_s K$ vanishes, we have
\[
\A_H(u(s)) \,=\, \lim_{n\to \infty} \A_H(u_n(s)) \,\leq\, 
\lim_{n \to \infty} \A_H(u_n(s_n)) \,=\, \lim_{n\to \infty} a_H(\tau_{s_n} u_n) \,=\, a_H(v).
\]
By taking the limit for $s \to -\infty$, we find
\[
a_H^-(u) \,\leq\, a_H(v),
\]
which implies that
\[
\min_{\cu(K,J)} a_H^- \,\leq\, a_H^-(u) \,\leq\, a_H(v) \,=\, \min_{\cu_{\ess}(K,J)} a_H .
\]
\end{proof}

\subsection{Autonomous Hamiltonians}
Let $H \in C^{\infty}(M)$ be an autonomous Hamiltonian. 
In this case, the critical points of~$H$ are the constant orbits of~$X_H$, 
and in particular they are elements of~$\cp(H)$. 
In general, the vector field~$X_H$ can have other non-constant contractible orbits,  
but if this does not happen we can often calculate the value of the action selector~$A$.

\begin{prop} \label{p:two}
Let $H \in C^{\infty}(M)$ be an autonomous Hamiltonian with exactly two critical values. 
Assume also that $\cp(H)$ consists only of constant orbits.
Then 
\[
A(H) \,=\, \min_M H.
\]
\end{prop}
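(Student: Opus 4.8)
The plan is to establish the two inequalities $A(H)\le\min_M H$ and $A(H)\ge\min_M H$ separately. For the upper bound, I would use continuity (Proposition~\ref{p:continuity}) and property~\eqref{zero}: since $H\le\max_M H$ pointwise and $H\ge\min_M H$ pointwise, monotonicity together with~\eqref{zero} already gives $\min_M H\le A(H)\le\max_M H$, but this is not yet sharp. The idea for the sharper upper bound is that $A$ is spectral (Proposition~\ref{spec}), so $A(H)\in\spec(H)$. Since $\cp(H)$ consists only of constant orbits, $\spec(H)$ is the set of critical values of~$H$, which by hypothesis is a two-element set $\{c_1,c_2\}$ with $c_1<c_2$; note $c_1=\min_M H$ and $c_2=\max_M H$. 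So it suffices to rule out $A(H)=c_2=\max_M H$.

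To rule out $A(H)=\max_M H$ I would exhibit, for a suitable deformation $(K,J)\in\cd(H)$, an element $u\in\cu(K,J)$ with $a_H^-(u)<\max_M H$, thereby forcing $A^-(K,J)\le c_1<c_2$, and hence (since $A^-(K,J)$ is always in $\spec(H)$) $A^-(K,J)=c_1$ for every such deformation; taking the supremum gives $A(H)\le c_1$. The natural choice is $K(s,t,x)=\beta(s)H(x)$ for a cutoff $\beta$ with $\beta(s)=1$ for $s$ small and $\beta(s)=0$ for $s$ large, and $J$ independent of $s$. Then by Proposition~\ref{surj} there is $u\in\cu(K,J)$ with $u(z)=m$ where $m$ is a point where $H$ attains its minimum; by Proposition~\ref{comp}~(ii), $a_H^-(u)=\A_H(x)$ for some $x\in\cp(H)$, i.e.\ $a_H^-(u)\in\{c_1,c_2\}$. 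The point is to pick the deformation (or the target point) so that $a_H^-(u)$ cannot be $c_2$: since $s\mapsto\A_H(u(s))$ is non-increasing on $(-\infty,s^-]$ and then the action-energy identity~\eqref{act-en} controls the drop, and since $u$ passes through a global minimum point of $H$, the value $a_H^-(u)$ should be pinned to $c_1$. Concretely, I expect that by the energy identity the asymptotic action $a_H^-(u)$ equals $\A_H$ evaluated on the constant loop at a minimum, which is $\min_M H$, provided the constant solution through~$m$ is forced; this is where the hypothesis of exactly two critical values, so that the only loops available are constants at critical points, does the work.

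For the lower bound $A(H)\ge\min_M H$, I would again invoke Proposition~\ref{p:continuity} (or directly monotonicity with~\eqref{zero}): comparing $H$ with the constant function $\min_M H$ gives $A(H)\ge A(\min_M H)=\min_M H$ by~\eqref{zero} and~\eqref{e:r}. Combining the two bounds yields $A(H)=\min_M H$.

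The main obstacle I anticipate is the upper bound argument — specifically, showing that for an appropriate deformation there is a finite-energy Floer cylinder whose negative asymptotic orbit is a minimum of $H$ rather than a maximum. The subtlety is that Proposition~\ref{surj} only guarantees \emph{some} solution through a prescribed point, and Proposition~\ref{comp}~(ii) only tells us the $\alpha$-limit is \emph{some} constant critical orbit; one must use the through-point constraint together with the monotonicity of the action along $u$ and the fact that there are only two critical values to conclude the $\alpha$-limit has the \emph{smaller} action. I would handle this by choosing the target point $m$ to be a minimum of $H$ and observing that $a_H^-(u)=\sup_{s\le s^-}\A_H(u(s))\ge\A_H(u(s_0))$ for any $s_0$, while the energy estimate forces the total variation to be small enough (or the asymptotic value to be exactly a critical value $\le\A_H$ at the minimum), squeezing $a_H^-(u)$ down to $\min_M H=c_1$.
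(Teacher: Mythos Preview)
Your lower bound $A(H)\ge\min_M H$ is fine, and your reduction via spectrality to $A(H)\in\{\min H,\max H\}$ is correct. The gap is in ruling out $A(H)=\max_M H$, and it occurs at two levels.

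First, a quantifier problem: since $A(H)=\sup_{(K,J)}A^-(K,J)$, showing $A(H)\le c_1$ requires $A^-(K,J)\le c_1$ for \emph{every} $(K,J)\in\cd(H)$, not just for the particular choice $K=\beta(s)H$. Your clause ``$A^-(K,J)=c_1$ for every such deformation'' is asserted but not argued.

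Second, and more seriously, even for a fixed $(K,J)$ the argument does not go through. You want $u\in\cu(K,J)$ with $a_H^-(u)<c_2$, and you propose to force $u$ through a minimum point $m$ at some $z=(s_0,t_0)$ via Proposition~\ref{surj}. But $a_H^-(u)=\sup_{s\le s^-}\A_H(u(s))$ is a \emph{supremum} as $s\to-\infty$; hitting $m$ at a single finite point gives no upper bound on it. It does not even bound $\A_H(u(s_0))$, which is an integral over the whole loop $u(s_0,\cdot)$, not $H(m)$. Nothing prevents the $\alpha$-limit of such a $u$ from being a constant orbit at a \emph{maximum} of~$H$. Your closing suggestion that ``the energy estimate forces the total variation to be small enough'' is not correct: the energy identity relates the drop in action to the energy plus $\int\partial_sK$, and places no obstruction whatsoever on $a_H^-(u)=c_2$.

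The paper resolves both issues by passing to the equivalent description $A^-(K,J)=\min_{\cu_{\ess}(K,J)}a_H$ of Proposition~\ref{cara}. Arguing by contradiction, if $A(H)=\max H$ then some $(K,J)$ has $\min_{\cu_{\ess}(K,J)}a_H=\max H$. Because $\cu_{\ess}(K,J)$ is $\tau$-invariant and sits inside $\cu(H,J^-)$, every $u\in\cu_{\ess}(K,J)$ then satisfies $\A_H(u(s))=\max H$ for \emph{all}~$s$, forcing $u$ to be a trivial cylinder at a maximum point. This contradicts the surjectivity of the evaluation map on $\cu_{\ess}(K,J)$ from Proposition~\ref{p:inv}. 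The $\tau$-invariance of $\cu_{\ess}$ is exactly what upgrades ``$u$ passes through~$m$ at one instant'' to ``$u(s,t)$ equals $m$ for all $(s,t)$'', which is the missing ingredient in your approach.
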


\begin{proof}
In this case, $\A_H$ has exactly two critical values, $\min H$ and $\max H$. 
Hence $A(H)$ is one of these two numbers.
For every $(K,J) \in \cd(H)$,
\begin{equation} \label{e:inequalitiesmin}
\min_{\cu_{\ess}(K,J)} a_H \,\leq\, \max_{\cu_{\ess}(K,J)} a_H \,\leq\, \max_{\cu(H,J^-)} a_H \,=\, \max_{\cp(H)} \A_H \,=\, \max_M H
\end{equation}
and, by Proposition~\ref{cara}, the number
\[
\min_{\cu_{\ess}(K,J)} a_H = A^-(K,J)
\] 
belongs to $\spec (H) = \{ \min H, \max H\}$.
Assume by contradiction that 
\[
A(H) = \sup_{(K,J) \in \cd H)} A^-(K,J)
\]
has the value $\max H$. Then we can find $(K,J) \in \cd (H)$ such that all
inequalities in~\eqref{e:inequalitiesmin} are equalities, and in particular
\[
\min_{\cu_{\ess}(K,J)} a_H = \max_M H.
\]
This identity implies that $\cu_{\ess}(K,J)$ consists only of constant cylinders defined by the maximum points 
of~$H$. Indeed, for every $u\in \cu_{\ess}(K,J)$ we can then find a periodic orbit $x\in \mathscr{P}(H)$ 
in the set $\alpha$-$\lim (u)$ with
\[
\max H \leq a_H(u) \leq \A_H(x).
\]
Since $\A_H(x) \leq \max H$ by the assumption, this yields $a_H(u) = \max H$.
By Proposition~\ref{p:inv}, the translates $\tau_s u$ also belong to $\cu_{\ess}(K,J)$,
whence $a_H(\tau_s u) = \max H$ for all $s \in \R$.
Since we also know that $\tau_s u \in \cu (H,J^-)$, 
it follows that $u$ is a trivial cylinder $u(s,t)=m$ with $H(m)=\max H$.

What we have just proved violates the surjectivity of the evaluation map $\ev_{\-2 z}|_{\cu_{\ess}(K,J)}$ from Proposition~\ref{p:inv}.
\end{proof}

\begin{rem} \label{twocrit}
{\rm
It is easy to construct autonomous Hamiltonians which satisfy the assumptions of the above proposition.
For instance, take a symplectically embedded ball $B \subset M$ of radius $3 \gve$
and a Hamiltonian~$H$ on~$M$ with support in~$B$ that on~$B$ is a radial function
$H = f(\pi |z|^2)$, where $f \colon \R_{\geq 0} \to \R_{\leq 0}$ is negative constant on $\{ r \leq \gve\}$,
vanishes on $\{ r \geq 2 \gve\}$, and has positive derivative on $\{\gve < r < 2 \gve\}$.
Then the minimum $f(0)$ and the maximum~$0$ are the only critical values of~$H$,
and if we further impose that $f' < 1$, we see as in the introduction that all 
non-constant periodic orbits of~$X_H$ have period larger than~$1$.
Hence Proposition~\ref{p:two} implies that $A(H)<0$.

By using Hamiltonians of this sort, together with the monotonicity property of~$A$, one can easily show that 
$A(H)<0$ for every non-positive Hamiltonian $H \in C^{\infty}(\T \times M)$ which is not identically zero. 
This is proved in Section \ref{s:axiom}, in which we investigate the properties of action selectors axiomatically.
}
\end{rem}

\begin{rem} \label{rem:explicit}
{\rm
In Remark~\ref{rem:ref} we give an explicit formula for $A(H)$ for a class of Hamiltonians different 
from the one in Proposition~\ref{p:two}. 
}
\end{rem}


\section{An action selector on convex symplectic manifolds}
\label{s:convex}

A compact symplectic manifold $(M,\omega)$ is called convex if 
it has non-empty boundary and if near the boundary one can find a Liouville 
vector field~$Y$, namely such that $\cL_Y \omega = \omega$, 
which is transverse to the boundary and points outwards. 
We shall also assume that $[\omega]$ vanishes on~$\pi_2(M)$.

Since the boundary is compact, we can find $\gve >0$ such that the flow $\phi^t_Y$ 
of~$Y$ defines an embedding
\[
(1-\gve,1] \times \partial M \rightarrow M, \qquad (r,x) \mapsto \phi^{\log r}_Y(x)
\]
onto an open neighborhood $U$ of $\partial M$. 
This embedding defines a smooth positive function 
\[
r \colon U \rightarrow \R
\]
such that $r^{-1}(\{1\}) = \partial M$ and $r^{-1}((1-\gve,1)) = U \setminus \partial M$.

We consider the set $\ch (M)$ of smooth functions $H \colon \T \times M \to \R$ such that
$X_H$ has compact support in $\T \times (M \setminus \partial M)$.
In other words, for every component~$C_i$ of the boundary~$\partial M$
there exists an open neighborhood~$U_i$ of~$C_i$ in~$M$ and a function 
$h_i \in C^{\infty}(\T)$ such that $H(t,x) = h_i(t)$ for all $(t,x) \in \T \times U_i$.

The symbol $\mathscr{K}(M)$ now denotes the space of functions $K\in C^{\infty}(\R \times \T \times M)$ 
such that $X_K$ is supported in $\R \times \T \times (M\setminus \partial M)$ and $K(s,t,x)$ does not depend 
on $s$ for $s\leq s^-$ and for $s\geq s^+$, for some numbers $s^-,s^+$ depending on~$K$.

The set $\cj_\omega(M)$ now consists of all smooth families $J=\{J^s\}$ of $\omega$-compatible 
almost complex structures on~$M$ such that $\partial_s J$ is compactly supported,  
   $J(s,x)$ does not depend on~$s$   
for all $x$ in a neighborhood of~$\partial M$, 
and the equation
\begin{equation}
\label{contact}
dr \circ J = \imath_{Y} \omega
\end{equation}
holds on this neighborhood, where $r$ is the function which is induced by the 
Liouville vector field~$Y$ as above. 

For $K \in \ck (M)$ and $J \in \cj_\omega(M)$, let $\cu (K,J)$ be the set of finite energy solutions 
of the Floer equation~\eqref{floer-s} on~$M$. 
Being smooth maps defined on an open manifold (namely the cylinder $\R \times \T$), 
the elements $u\in \cu (K,J)$ are tangent to the boundary of~$M$ where they touch it. 
The next result implies, in particular, that the only elements $u \in \cu (K,J)$ 
that touch the boundary are constant maps.

\begin{lem} \label{le:conv}
Let $V'\subset U$ be an open neighborhood of $\partial M$ on which
the vector field $X_K(s,t,\cdot)$ vanishes for every $(s,t)\in \R \times \T$ and 
the almost complex structure 
   $J_0 := J(s,\cdot)$ is independent of~$s$ 
and 
satisfies~\eqref{contact}. Let $\delta>0$ be so small that the closure of the open set
\[
V := \{x\in U \;| \; r (x)> 1-\delta\}
\]
is contained in $V'$. Then the image of any $u \in \cu (K,J)$ that is not constant 
is contained in~$M\setminus V$.
\end{lem}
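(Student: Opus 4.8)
The plan is to run a maximum-principle argument for the function $r \circ u$ on the region where a non-constant $u \in \cu(K,J)$ would enter $V$. The key point is that the convexity condition \eqref{contact}, together with $\omega$-compatibility of $J_0$, forces $r \circ u$ to be subharmonic (with respect to the standard conformal structure on the cylinder $\R \times \T$) on the open set $\{(s,t) : u(s,t) \in V'\}$ where the Floer equation reduces to the $J_0$-holomorphic curve equation $\partial_s u + J_0(u)\,\partial_t u = 0$ — this uses that $X_K$ vanishes on $V'$ and $J$ is $s$-independent there. First I would record this reduction, and then compute the Laplacian of $\rho := r \circ u$ on the preimage of $V'$: using \eqref{contact} one gets $d\rho \circ i = d(r\circ u)\circ i$ expressed via $\imath_Y\omega$, and a standard computation (as in the proof that holomorphic curves cannot touch a convex boundary from inside, cf.\ \cite[\S 6.4]{hz94} and the usual convexity arguments) yields $\Delta \rho \geq 0$ wherever $u$ takes values in $V'$, with equality forcing $\partial_s u = \partial_t u = 0$ there. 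The $\omega$-compatibility enters to make the relevant quadratic form $|\partial_s u|_{J_0}^2$ appear with the right sign.

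Next I would exploit the asymptotic behavior of $u$. By Proposition \ref{comp}, $u$ is asymptotic as $s \to \pm\infty$ to $1$-periodic orbits of $X_{K^-}=X_H$ and $X_{K^+}$; since $X_K$ is supported away from $\partial M$, these limit orbits lie in $M \setminus V'$ (here one may need to shrink: the orbits lie where $X_K$ can be nonzero, hence outside the neighborhood where $X_K \equiv 0$, which contains $\overline V$). Consequently $\rho(s,t) < 1-\delta$ for $|s|$ large, so if $u$ entered $V$ somewhere, the set $\Omega := \{(s,t) : \rho(s,t) > 1-\delta\}$ would be a non-empty, bounded (in the $s$-direction) open subset of $\R \times \T$ on which $\rho$ is subharmonic, with $\rho \leq 1-\delta$ on $\partial\Omega$. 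Wait — I must be careful that $\Omega$ is relatively compact and that $\overline\Omega \subset \{u \in V'\}$; this holds because $V = \{r > 1-\delta\}$ has closure in $V'$ by hypothesis, so on $\overline\Omega$ the reduction to the holomorphic equation is valid. The maximum principle then gives $\rho \leq 1-\delta$ on all of $\Omega$, contradicting the definition of $\Omega$ unless $\Omega = \emptyset$; and $\Omega = \emptyset$ is exactly the assertion that $u(\R \times \T) \subset M \setminus V$.

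The main obstacle I anticipate is the subharmonicity computation itself: one must verify carefully that $\Delta(r \circ u) \geq 0$ for $J_0$-holomorphic $u$ under hypothesis \eqref{contact}, and that the inequality is strict (or forces $du = 0$) in the interior. This is the place where \eqref{contact} — that $dr \circ J_0 = \imath_Y \omega$ — and the fact that $Y$ is Liouville ($\cL_Y\omega = \omega$) must be combined: one writes $\Delta(r\circ u)\,ds\wedge dt$ in terms of $u^*d(\imath_Y\omega) = u^*\omega$ pulled back appropriately, and the positivity of $u^*\omega$ for a $J_0$-holomorphic curve closes the argument. A secondary technical point is handling the boundary behavior of $\rho$ near $\partial M$ if $u$ actually touches $\partial M$ (where $r$ attains its maximum $1$); but since $u$ is a smooth map into $M$ tangent to $\partial M$ where it meets it, and $\{r=1\} = \partial M$, the same subharmonicity plus maximum principle rules this out simultaneously, so no separate argument is needed beyond noting that $V' \supset \overline V$ is a genuine neighborhood on which all hypotheses hold.
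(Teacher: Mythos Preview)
Your subharmonicity set-up and the reduction of the Floer equation to $\partial_s u + J_0(u)\,\partial_t u=0$ on $u^{-1}(V')$ are correct and match the paper. The gap is your claim that $\Omega:=u^{-1}(V)$ is bounded in~$s$. You justify this by asserting that the asymptotic limit orbits ``lie where $X_K$ can be nonzero, hence outside the neighborhood where $X_K\equiv 0$''. That is false: every point $p\in V'$ is a constant $1$-periodic orbit of $X_{K^\pm}$ precisely \emph{because} $X_{K^\pm}(p)=0$, so nothing in Proposition~\ref{comp} prevents an element of $\alpha\mbox{-}\lim(u)$ (or of the $\omega$-limit set) from being such a constant lying in~$V$. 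Indeed the lemma itself permits constant $u$ with value in~$V$; the content is that a \emph{non-constant} $u$ cannot be asymptotic to one, and this is exactly what requires proof.

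The paper confronts this possibility directly. The maximum principle yields only that $\Omega$ has no \emph{bounded} component; if $\Omega\neq\emptyset$ one must handle an unbounded one. The paper first shows, via the compactness/limit-orbit argument (using that any limit orbit meeting $M\setminus V'$ lies entirely in $M\setminus V'$), that the larger set $\Omega'=u^{-1}(V')$ then contains a full half-cylinder $(-\infty,S)\times\T$, and symmetrically at $+\infty$. Mapping the cylinder biholomorphically to $\C\setminus\{0\}$ via $\varphi(s,t)=e^{2\pi(s+it)}$ and invoking the removable singularity theorem for finite-energy $J_0$-holomorphic maps, $u$ extends across the puncture(s) to a $J_0$-holomorphic sphere with image in~$U$. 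Since $\omega=d(\imath_Y\omega)$ is exact on~$U$, this sphere---and hence $u$---is constant. This removal-of-singularities step is the missing idea in your proposal; without it the case of $\Omega$ unbounded cannot be excluded.
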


\begin{proof} 
The argument is well known, but we reproduce it here for the sake of completeness.
Let $u$ be an element of $\cu (K,J)$ and set $\Omega':=u^{-1}(V')$. The conditions on~$V'$ imply 
that $u|_{\Omega'}$ is a $J_0$-holomorphic map and $\rho := r\circ u \colon \Omega' \rightarrow \R$ 
is a subharmonic function.
The open set $\Omega := u^{-1}(V)$ satisfies $\overline{\Omega} \subset \Omega'$. 
We wish to prove that if the open set~$\Omega$ is not empty, 
then $u$ is a constant map.

The subharmonic function $\rho$ takes the value $1-\delta$ on $\partial \Omega$, 
and it is strictly larger than this value on~$\Omega$. 
By the maximum principle, $\Omega$ cannot have bounded components. 

Without loss of generality, we may assume that $s$ is unbounded from below 
on~$\Omega$. We claim that in this case $\Omega'$ contains a subset of the form 
$(-\infty,S)\times \T$, 
for some $S \in \R$. If this is not the case, we can find a sequence 
$(s_n,t_n) \in \R \times \T$ such that $s_n \rightarrow -\infty$ and $u(s_n,t_n) \in M \setminus V'$. 

As in the proof of Proposition~\ref{comp}, one shows that $\{ \tau_{s_n}u \mid n \in \N \}$ is relatively
$C^\infty_{\loc}$-compact in $C^\infty (\R \times \T, M)$.
Up to replacing $(s_n,t_n)$ by a subsequence, we can therefore assume that $\tau_{s_n}u$ converges in 
$C^\infty_{\loc}$ to a finite energy solution~$v$ of Floer's equation for~$J^-$ and~$K^-$,
and as in the proof of Proposition~\ref{comp}, we see that $v$ is a trivial cylinder for $K^-$,
that is $v(s,t) = x(t)$ is a 1-periodic orbit of~$X_{K^-}$.
In particular, the sequence of curves $u(s_n) = \tau_{s_n}u(0)$ converges to $x \in \mathscr{P}(K^-)$. 
Since all the solutions of $X_{K^-}$ through points in~$V'$ are constant, $x(\T)$ is disjoint from~$V'$. 
For $n$ large enough the set $u(\{s_n\}\times \T)$ is then contained in $M \setminus V$. 
This implies that $\{s_n\}\times \T$ is 
disjoint from~$\Omega$. Then the facts that $s_n \rightarrow -\infty$ and that $s$ is unbounded 
from below on~$\Omega$ force $\Omega$ to have bounded components. 
Since we have excluded this possibility, we reach a contradiction and conclude that 
$\Omega'$ contains a subset of the form 
$(-\infty,S) \times \T$, 
for some $S\in \R$.

Therefore, the biholomorphic map
\[
\varphi \colon \R \times \T \rightarrow \C \setminus \{0\}, \qquad 
   \varphi(s,t) = e^{2\pi(s+ it)},
\]
maps $\Omega'$ onto an open set of the form $\widetilde{\Omega}'\setminus \{0\}$, where $\widetilde{\Omega}'$ 
is an open neighborhood of the origin in~$\C$. Having finite energy, 
the $J_0$-holomorphic map $\tilde{u} := u \circ \varphi^{-1}$ extends holomorphically to~$\widetilde{\Omega}'$ 
by the removal of singularity theorem, see \cite[Theorem 4.1.2]{ms04}. Therefore, $\tilde{\rho} := r\circ \tilde{u}$ is a subharmonic function on $\widetilde{\Omega}'$. 
The fact that $s$ is unbounded from below on~$\Omega$ implies that $\tilde{\rho}(0) \geq 1-\delta$.

We now define the open subset $\widetilde{\Omega}$ of~$\C$ to be $\varphi(\Omega)\cup \{0\}$ if 
$\tilde{\rho}(0) > 1-\delta$ and $\widetilde{\Omega} := \varphi(\Omega)$ if $\tilde{\rho}(0) = 1-\delta$. 
The subharmonic function $\tilde{\rho}$ is strictly larger than $1-\delta$ on~$\widetilde{\Omega}$ and equal to
$1-\delta$ on its boundary. Hence the maximum principle implies that $\widetilde\Omega$ 
is unbounded, and so is a fortiori $\widetilde{\Omega}'$. By arguing as above and applying 
the removal of singularity theorem also at~$\infty$, we can extend 
the $J_0$-holomorphic map~$\tilde{u}$ to a $J_0$-holomorphic map~$\hat{u}$ which is defined 
on the open subset $\widehat{\Omega}' := \widetilde{\Omega}' \cup \{\infty\}$ of the Riemann sphere 
$\C \cup \{\infty\}$ such that $\hat{\rho} := r\circ \hat{u}$ satisfies $\hat{\rho}(\infty)\geq 1-\delta$.

As before, we set $\widehat{\Omega}$ to be $\widetilde{\Omega}\cup \{\infty\}$ if $\hat{\rho}(\infty) > 1-\delta$ and 
$\widetilde{\Omega}$ if $\hat{\rho}(\infty) = 1-\delta$. Then $\widehat{\Omega}$ is an open subset of 
the Riemann sphere, and the subharmonic function $\hat{\rho}$ is strictly larger than 
$1-\delta$ on it and equal to $1-\delta$ on its boundary. The maximum principle now forces 
$\widehat{\Omega}$ to be the whole Riemann sphere and $\hat{\rho}$ to be constant on it. 
In particular, $\hat{u}$ is a $J_0$-holomorphic sphere taking its values in~$V \subset U$. 
The fact that $\omega= d (\imath_Y \omega)$ is exact on~$U$ implies that $\hat{u}$ is constant, 
and so is~$u$.
\end{proof}

\begin{prop} 
\label{cvx}
The set $\cu(K,J)$ is compact in $C^{\infty}(\R \times \T,M)$. 
For every $z \in \R \times \T$ and $m \in M$ there exists $u \in  \cu(K,J)$ such that $u(z)=m$.
\end{prop}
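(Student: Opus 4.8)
The plan is to reduce the convex case to the closed case via Lemma~\ref{le:conv}. First I would fix $(K,J) \in \cd(H)$ (in the convex sense) and choose $V' \subset U$, $\delta > 0$ and the set $V$ as in Lemma~\ref{le:conv}. That lemma tells us that every non-constant $u \in \cu(K,J)$ has image in the compact set $M \setminus V$, and the constant solutions take values everywhere in $M$. So the ``escape to the boundary'' phenomenon is ruled out, and every element of $\cu(K,J)$ stays in a fixed compact subset $M \setminus V$ of the \emph{interior} $M \setminus \partial M$ (after enlarging it slightly so that it contains the constant solutions landing anywhere, which is automatic since constant maps cause no trouble). Then one can run the same Gromov-compactness and Fredholm arguments as in the closed case, verbatim: the bubbling-off analysis uses only $[\omega]|_{\pi_2(M)} = 0$ and the uniform energy bound~\eqref{unifen}, which still holds since $\cp(K^\pm)$ is compact; elliptic bootstrapping gives the higher derivative bounds; lower semicontinuity of the energy gives closedness; hence $\cu(K,J)$ is compact in $C^\infty(\R \times \T, M)$. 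The point is that the a priori confinement to a compact interior region means no new analytic difficulties arise at $\partial M$.

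For the second assertion, I would imitate the proof of Proposition~\ref{surj}. Fix $z \in \R \times \T$ and $m \in M$. For large $T > 0$, glue two disks to $[-T, T] \times \T$ to form a sphere $S_T$, extend the Floer data by homotoping $K$ to its $s$-independent ends $K^\pm$ (which lie in $\ch(M)$) and by capping $J$ with $J^\pm$, and consider the space $\cu_T(K,J,z,m)$ of solutions on $S_T$ with $u(z) = m$. The same confinement argument via the maximum principle (Lemma~\ref{le:conv} applied on the capping disks as well, using that $\omega$ is exact near $\partial M$ and the $J^\pm$ satisfy~\eqref{contact}) shows these solutions avoid $V$, so they live in a compact interior region; hence $\cu_T(K,J,z,m)$ is $C^\infty$-compact. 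Then the homotopy-of-sections argument goes through unchanged: view $\cu_T(K,J,z,m)$ as the zero set of a section $S(1,\cdot)$ of a Banach bundle over the space of $W^{1,p}$ maps $S_T \to M$ sending $z$ to $m$, homotope $K$ to $0$ and $J$ to an $S_T$-independent $J_0$ to obtain a family $S \colon [0,1] \times B \to E$, note that $S^{-1}(0_E)$ is compact (again by the usual confinement and Gromov-compactness argument), that the fiberwise differentials are index-$0$ Fredholm operators by \cite[Appendix~4]{hz94}, and that at the unique zero $u \equiv m$ of $S(0,\cdot)$ the differential is an isomorphism. Theorem~\ref{abstract} in the appendix then yields a zero of $S(1,\cdot)$, i.e., $\cu_T(K,J,z,m) \neq \emptyset$ for all large $T$. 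Letting $T_n \to \infty$ and extracting a $C^\infty_{\loc}$-convergent subsequence of $u_n \in \cu_{T_n}(K,J,z,m)$ produces the desired $u \in \cu(K,J)$ with $u(z) = m$.

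The main obstacle is verifying the confinement of solutions on the capped spheres $S_T$: one must check that Lemma~\ref{le:conv}, whose proof is written for cylinders $\R \times \T$, applies equally to solutions on $S_T$ — but this is really the same maximum-principle argument (a $J_0$-holomorphic map into a region where $\omega$ is exact and $dr \circ J_0 = \imath_Y \omega$ cannot touch the level sets of $r$ from inside without being constant), now applied on a domain that is a sphere rather than a cylinder, which is if anything easier since there is no end to worry about. Apart from this, everything is a routine transcription of Sections~\ref{s:not}--\ref{s:minimal} with ``$M$ closed'' replaced by ``$M$ convex and all Floer solutions confined to a fixed compact subset of the interior.'' One should also remark that the uniform energy bound~\eqref{unifen} survives because $\cp(K^-)$ and $\cp(K^+)$ are compact: indeed $K^\pm \in \ch(M)$ have $X_{K^\pm}$ supported in the interior, so their $1$-periodic orbits stay in a compact interior set and $\cp(K^\pm)$ is $C^\infty$-compact by Ascoli--Arzel\`a exactly as $\cp(H)$ was in Section~\ref{s:not}.
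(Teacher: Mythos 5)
Your proposal follows the same strategy as the paper: Lemma~\ref{le:conv} confines all non-constant finite-energy solutions to the compact interior region $M\setminus V$, so the Gromov-compactness argument of Proposition~\ref{comp} and the Fredholm/homotopy argument of Proposition~\ref{surj} carry over. One detail deserves correction, though. In your surjectivity argument you assert that the solutions on $S_T$ through $m$ ``avoid $V$, so they live in a compact interior region''; this is false precisely when $m\in V$ (in particular when $m\in\partial M$), since the relevant solution is then the constant map at~$m$, which of course meets~$V$. Moreover, running the Banach-bundle argument with target a manifold with boundary is awkward when $m$ is on or near $\partial M$. The paper disposes of this by a case distinction: for $m\in V$ the constant map $u\equiv m$ is already an element of $\cu(K,J)$ with $u(z)=m$ (because $X_K$ vanishes and $J$ is $s$-independent there), so nothing needs to be proved; and for $m\in M\setminus V$ one runs your Fredholm argument on the \emph{open} manifold $M\setminus\partial M$, where Lemma~\ref{le:conv} (applied to the whole homotopy from $(K,J)$ to $(0,J_0)$) supplies the compactness that would otherwise fail on a non-closed target. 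With that adjustment your argument matches the paper's.
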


\begin{proof}
Compactness is proved as in Proposition~\ref{comp}. Let $V$ be an open neighborhood
of~$\partial M$ satisfying the condition of Lemma~\ref{le:conv} for the pair~$(K,J)$ 
and for all elements of a smooth homotopy joining $(K,J)$ to~$(0,J_0)$, 
where the almost complex structure~$J_0$ does not depend on~$t$
and satisfies~\eqref{contact} on~$U$. If $m \in V$, then the constant map taking the value~$m$ belongs to 
$\cu(K,J)$. If $m \in M \setminus V$, then we can find a map $u \in  \cu(K,J)$ such that $u(z)=m$ 
arguing as in the proof of Proposition~\ref{surj}. 
Indeed, in this proof we may replace the closed manifold~$M$ by the open 
manifold~$M \setminus \partial M$ because the necessary compactness for the spaces of 
solutions~$u$ of the various Floer equations involved satisfying $u(z)=m$ 
is guaranteed by Lemma~\ref{le:conv}.
\end{proof}

Thanks to the above result, the action selector 
\[
A \colon \mathscr{H}(M) \rightarrow \R
\]
can be defined as in the closed case:
\[
A(H) := \sup_{(K,J) \in \mathscr{D}(H)} \min_{u\in \cu(K,J)} a_H^-(u),
\]
where $\mathscr{D}(H) := \mathscr{K}(H) \times \mathscr{J}_{\omega}(M)$, with $\mathscr{K}(H)$ denoting the set of all 
$K \in \mathscr{K}(M)$ such that $K^-=H$. The same properties that we have proved 
in the closed case hold also in the present setting.

\begin{rem}[\bf Exhaustions] \label{rem:exhaust}
{\rm
Consider a symplectic manifold $(M,\omega)$ that can be exhausted by compact convex symplectic manifolds: 
$M = \bigcup_{i=1}^\infty M_i$ where $M_1 \subset M_2 \subset \dots$ are compact convex submanifolds of~$M$. 
Also assume that $[\omega] |_{\pi_2(M)} =0$.
Examples are $(\R^{2n},\omega_0)$, cotangent bundles with their usual symplectic form 
and, more generally, Weinstein manifolds. 

Given a function $H \colon \T \times M \to \R$ with compactly supported Hamiltonian vector field~$X_H$, 
choose $i$ so large that the support of~$X_H$ is contained in the interior of~$M_i$.
Then $A(H;M_j)$ is well-defined for $j \geq i$.
These action selectors are sufficient for proving several results on exhaustions $(M,\omega)$,
like Gromov's non-squeezing theorem or the Weinstein conjecture for displaceable energy surfaces of contact type. 

Alternatively, one can define one single action selector for $(M,\omega)$ as follows.
While in general it is not clear whether the sequence $(A(H;M_j))$ stabilizes, or whether it is monotone, 
it is certainly bounded, as it takes values in the spectrum of~$H$. Hence we can define
\[
A(H) \,:=\, \liminf_{j \rightarrow \infty} A(H;M_j) \,\in\, \R .
\]
One readily checks that $A(H)$ is a minimal action selector on the space of functions $H \colon \T \times M \to \R$ 
with $X_H$ of compact support in the sense of Definition~\ref{def:axiom} below.
} 
\end{rem}


\section{Axiomatization and formal consequences} \label{s:axiom}

It is useful to define an action selector by a few properties (``axioms'')
and to formally derive other properties from these axioms. 
In this way, it becomes clearer which properties of an action selector are fundamental 
and which other properties are just formal consequences of these fundamental ones.
The axiomatic approach also makes clear that properties that hold
for some action selectors, but do not follow from the axioms, 
rely on the specific construction of the selectors for which they hold. 
For example, the ``triangle inequality'' 
\[
\sigma (H_1\# H_2) \geq \sigma (H_1) + \sigma (H_2)
\]
and the minimum formula 
\[
\sigma (H_1+H_2) = \min \left\{ \sigma (H_1), \sigma (H_2) \right\}
\]
for functions supported in disjoint incompressible 
Liouville domains, both hold for the Viterbo selector and the PSS selector,
but are unknown for general minimal selectors.

\subsection{Axiomatization}
An attempt to axiomatize action selectors was made in~\cite{FrGiSch05},
and a very nice and slender set of four axioms was given in~\cite{HuLeSe15}.
We here give an even smaller list of axioms, that retains the first two axioms in~\cite{HuLeSe15},
but alters their non-triviality axiom and discards the minimum formula axiom.

Throughout this section we assume that $(M,\omega)$ is connected and symplectically aspherical 
(i.e.\ $[\omega] |_{\pi_2(M)} =0$).
If $M$ is closed, set $\ch (M) = C^\infty(\T \times M, \R)$,
and if $M$ is open (i.e.\ not closed), let $\ch (M)$ as in Section~\ref{s:convex}
be the set of functions in $C^\infty (\T \times M,\R)$ such that $X_H$ has compact support
in the interior of $\T \times M$.
The spectrum $\spec (H)$ of $H \in \ch (M)$ is again the set of critical values of the action functional~$\A_H$.

\begin{lem} \label{le:nowhere}
The spectrum $\spec (H)$ is a compact subset of $\R$ with empty interior.
\end{lem}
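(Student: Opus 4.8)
The claim has two parts: compactness of $\spec(H)$ in $\R$, and emptiness of its interior. The first part is already established in the excerpt (it is noted that $\cp(H)$ is compact in $C^\infty(\T,M)$ by Ascoli--Arzel\`a, and $\A_H$ is continuous, so $\spec(H) = \A_H(\cp(H))$ is compact). So the only real content is that $\spec(H)$ has empty interior, i.e.\ it is nowhere dense --- equivalently, $\spec(H)$ contains no nontrivial interval.

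The standard approach is an infinite-dimensional Sard-type argument for the action functional, or more precisely a ``Sard--Smale''-style statement. However, since the paper is deliberately avoiding heavy Floer-theoretic and transversality machinery, I expect the cleanest route here is the classical fact that the set of critical values of the action functional is measure zero (has empty interior). The plan is to invoke the theorem that for $H \in C^\infty(\T \times M)$ the critical value set $\spec(H)$ is a closed set of Lebesgue measure zero in $\R$; this goes back to work on the Hamiltonian action functional and can be proved, for instance, by a finite-dimensional reduction (generating functions / broken geodesic approximation of the action) combined with the classical Sard theorem, or by the Sard--Smale theorem applied to a suitable Fredholm setup. First I would recall that $\spec(H)$ is compact. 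Then I would cite or sketch that it has measure zero: one reduces the variational problem for $\A_H$ on the loop space to a finite-dimensional problem (e.g.\ by discretizing the loop into sufficiently many pieces, or by using a local generating-function description near each periodic orbit), obtaining that locally the critical values of $\A_H$ coincide with the critical values of a smooth function on a finite-dimensional manifold, and then Sard's theorem finishes it. A closed set of measure zero has empty interior, giving the claim.

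Alternatively --- and this may be what the authors do, since it fits the ``only Gromov compactness'' philosophy --- one can argue more softly: since $\cp(H)$ is compact, it suffices to show that near each $x_0 \in \cp(H)$ the local image of the action takes up no interval, which reduces to the finite-dimensional Sard argument on a Gromoll--Meyer-type neighborhood. Either way the key input is Sard's theorem after a finite-dimensional reduction.

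The main obstacle, and the step requiring care, is the finite-dimensional reduction itself: making precise that the critical values of $\A_H$ on the (infinite-dimensional) contractible loop space coincide locally with those of a genuine finite-dimensional smooth function, so that the classical Sard theorem applies. This is a well-known but slightly technical construction; I would either cite it from the literature (e.g.\ the treatment of the action functional in Hofer--Zehnder or via generating functions) or give the short discretization argument: for a $1$-periodic orbit, subdivide $[0,1]$ into $N$ intervals with $N$ large, so that the time-$1/N$ flow of $X_H$ is close to the identity on a neighborhood, express the action of nearby ``broken'' loops as a smooth function on $M^N$ (or an open subset thereof), check that the critical points of this finite-dimensional function near the diagonal-type point correspond to genuine periodic orbits and that the critical values agree with $\A_H$, and then apply Sard. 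Since $\spec(H)$ is already known to be compact and hence closed, measure zero immediately upgrades to empty interior.
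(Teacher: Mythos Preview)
Your proposal is correct for closed $M$, and in fact supplies the argument behind what the paper simply cites as ``well known.'' But you have missed the actual content of the lemma in its context.

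The lemma is stated in Section~5, where $M$ may be open (noncompact or with boundary) and $\ch(M)$ consists of Hamiltonians whose vector field $X_H$ has compact support in $\T \times \Int(M)$. In that setting your compactness argument fails outright: $\cp(H)$ contains every constant loop at every point outside the support of $X_H$, so $\cp(H)$ is not compact when $M$ is noncompact, and you cannot write $\spec(H)=\A_H(\cp(H))$ and invoke Ascoli--Arzel\`a. Your finite-dimensional Sard reduction likewise presupposes a compact ambient manifold for the discretized problem.

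The paper's proof is precisely the reduction step you do not address. One chooses a compact submanifold with boundary $K\subset \Int(M)$ with $\supp X_H \subset \T\times \Int(K)$, cites the well-known fact (your Sard argument) that $\spec(H|_{\T\times K})$ is compact with empty interior, and then shows $\spec(H)=\spec(H|_{\T\times K})$. The last equality is the only work: any $1$-periodic orbit $x$ not contained in $K$ is a constant loop (since $X_H$ vanishes there), and since $H$ is locally a function of $t$ alone on $M\setminus K$ and $M$ is connected, there is a point $y\in\partial K$ with $H(t,y)=H(t,x)$ for all $t$, whence $\A_H(x)=\A_H(y)\in\spec(H|_{\T\times K})$.

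So your Sard sketch is fine and is exactly what the paper black-boxes for the compact piece, but the paper's own proof is entirely about the open case, which your proposal does not cover.
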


\proof
Since the support $S$ of $X_H$ is compact in $\Int (\T \times M) = \T \times \Int (M)$,
we find a compact submanifold with boundary $K \subset \Int (M)$ such that
$$
S \,\subset\, \T \times \Int (K) \,\subset\, \T \times K \,\subset\, \T \times \Int (M) . 
$$
It is well known that $\spec ( H |_{\T \times K})$ is compact and has empty interior.
It therefore suffices to show that $\spec (H) = \spec ( H |_{\T \times K})$.
The inclusion $\spec (H) \supset \spec ( H |_{\T \times K})$ is clear. 
So assume that $x$ is a 1-periodic orbit of~$X_H$ that is not contained in~$K$.
Since $H$ is locally a function of time on $\T \times (M \setminus K)$,
the orbit~$x$ is constant.
Since $M$ is connected, there exists $y \in \partial K$ such that $H(y,t) = H(x,t) = h(t)$ for all $t \in \T$.
Hence $\A_H(y) = \A_H(x) = \int_\T h(t)\, dt$.
\proofend

\begin{defn} \label{def:axiom}
{\rm
An action selector for a connected symplectically aspherical manifold $(M,\omega)$ is a map $\sigma \colon \ch (M) \to \R$
that satisfies the following two axioms.

\m \ni
{\bf A1 (Spectrality)}
$\sigma (H) \in \spec (H)$ for all $H \in \ch(M)$.

\m \ni
{\bf A2 ($C^\infty$-continuity)}
$\sigma$ is continuous with respect to the $C^\infty$-topology on $\ch(M)$.

\m \ni
An action selector is called minimal if, in addition, 

\m \ni
{\bf A3 (Local non-triviality)}
There exists a function $H \in \ch (M)$ with $H \leq 0$ and support in a symplectically
embedded ball in~$M$ such that $\sigma (H) <0$.
}
\end{defn}

\begin{rem}
{\rm
Assume that $\sigma \colon \ch(M) \to \R$ satisfies the spectrality axiom~A1.
Then $C^\infty$-continuity of~$\sigma$ is equivalent to $C^0$-continuity of~$\sigma$,
and continuity of~$\sigma$ implies its monotonicity, 
see Assertions~5 and~4 of Proposition~\ref{p:formal} below.
On the other hand, it is not clear if monotonicity of~$\sigma$, together with spectrality, 
implies its continuity, 
but this is so if $\sigma$ in addition has the shift property $\sigma (H+c)=\sigma(H)+c$ for all $H$ 
and $c \in \R$,
cf.\ the proof of Proposition~\ref{p:continuity}.
}
\end{rem}

Our selector $A$ on closed or convex symplectically aspherical manifolds
is indeed a minimal action selector, since it is spectral by Proposition~\ref{spec}, 
$C^\infty$-continuous since even Lipschitz continuous with respect to the $C^0$-norm by Proposition~\ref{p:continuity}, 
and non-trivial by Proposition~\ref{p:two} and Remark~\ref{twocrit}.
We note that the proof of monotonicity of~$A$ can be readily altered near the end to show directly that $A$
is $C^\infty$-continuous. 
In Proposition~\ref{p:formal} below we list many other properties of 
(minimal) action selectors,
some of which we have already verified
for $A$.

\subsection{Formal consequences}

For $H \in \ch (M)$ we abbreviate
$$
E^+(H) \,=\, \int_{\T} \max_{x \in M} H(t,x) \,dt , 
\qquad
E^-(H) \,=\, \int_{\T} \min_{x \in M} H(t,x) \,dt .
$$
The Hofer norm of $H$ is defined as
\begin{equation}  \label{e:Hofer}
\|H\| \,=\, E^+(H) - E^-(H) \,=\, \int_{\T} \left( \max_{x \in M} H(t,x) - \min_{x \in M} H(t,x) \right) dt .
\end{equation}
We also recall that the function 
\[
(H_1 \# H_2)(t,x) := H_1(t,x) + H_2(t, (\phi_{H_1}^t)^{-1}(x))
\]
generates the isotopy $\phi_{H_1}^t \circ \phi_{H_2}^t$.

A compact submanifold $U$ of $(M,\omega)$ is called a
{\it Liouville domain}\/ if $(U,\omega)$ is convex (see Section~\ref{s:convex} for the definition)
and if the corresponding Liouville vector field is defined on all of~$U$,
not just near the boundary~$\partial U$.
Examples are starshaped domains in~$\R^{2n}$ or 
fiberwise starshaped neighborhoods of the zero section of a cotangent bundle~$T^*Q$.
The domain~$U$ is {\it incompressible}\/ if the map $\iota_* \colon \pi_1(U) \to \pi_1(M)$ induced by inclusion is injective.
The above examples are incompressible Liouville domains.

Following \cite{Vi92} and \cite{HuLeSe15} we have
\begin{prop} \label{p:formal}
Assume that $(M,\omega)$ is connected and symplectically aspherical.
Then every action selector $\sigma$ on~$\ch (M)$ has the following properties.

\m \ni
{\bf 1. Zero:}
$\sigma (H)=0$ if $H \equiv 0$.

\m \ni 
{\bf 2. Shift:}
$\sigma (H + r) = \sigma (H) + \int_\T r(t)\,dt$ if $r \colon \T \to \R$ is a function of time.

\m \ni
{\bf 3. Coordinate change:}
If $\psi$ is a symplectomorphism of $(M,\omega)$ that is isotopic to the identity through symplectomorphisms,  
then 
$\sigma(H) = \sigma(H \circ \psi)$.

\m \ni
{\bf 4. Monotonicity:}
$\sigma (H_1) \leq \sigma (H_2)$\, if\,
$H_1 \leq H_2$.

\m \ni
{\bf 5. Lipschitz continuity:}
$E^- (H_1-H_2) \leq \sigma (H_1) - \sigma (H_2) \leq E^+(H_1-H_2)$.
In particular, $E^-(H) \leq \sigma (H) \leq E^+(H)$.

\m \ni
{\bf 6. Energy-Capacity inequality:}
$| \sigma (H_1) | \leq \| H_2 \|$
if $\phi_{H_2}$ displaces an open set $U \subset M$ such that $H_1$ is supported in~$\T \times U$.

\m \ni
{\bf 7. Composition:}
$\sigma (H_1) + E^-(H_2) \leq \sigma \left(H_1 \# H_2 \right) \leq \sigma (H_1) + E^+(H_2)$.

\m \ni
If, in addition, $\sigma$ is a minimal action selector, then:

\m \ni
{\bf 8. Non-degeneracy:}
If $H \leq 0$ and $H \neq 0$, then $\sigma (H) < 0$.

\m \ni
{\bf 9. Non-positivity:}
If $H$ has support in an incompressible Liouville domain,  
then $\sigma (H) \leq 0$.
In particular, $\sigma (H) =0$ for all non-negative Hamiltonians which are supported in an incompressible Liouville domain.
\end{prop}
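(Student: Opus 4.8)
The plan is to reduce the nine assertions to two sources of input: the two axioms~A1 and~A2 together with Lemma~\ref{le:nowhere}, and, for~(8) and~(9), Axiom~A3. The order I would follow is: first~(1)--(3) and the shift property; then the Lipschitz estimate~(5), from which~(4) is immediate; then~(7); then~(6); and finally~(8) and~(9). The only genuinely non-formal step is~(5), and that is where I expect the real work to lie.

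\emph{The elementary assertions.} The common mechanism for~(1), (2), (3) is this: if $\{H_\tau\}_{\tau\in[0,1]}$ is a $C^\infty$-continuous path in $\ch(M)$ and $\ell\colon[0,1]\to\R$ is continuous with $\spec(H_\tau)=\spec(H_0)+\ell(\tau)$ for all $\tau$, then $\tau\mapsto\sigma(H_\tau)-\ell(\tau)$ is continuous by~A2 and takes values in the compact nowhere dense set $\spec(H_0)$ by~A1 and Lemma~\ref{le:nowhere}, hence is constant, so $\sigma(H_1)=\sigma(H_0)+\ell(1)$. For~(1): $\spec(H)=\{0\}$ when $H\equiv0$, so~A1 alone gives $\sigma(0)=0$. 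For~(2): take $H_\tau=H+\tau r$, for which $X_{H_\tau}=X_H$ and $\spec(H_\tau)=\spec(H)+\tau\int_\T r$, so $\ell(\tau)=\tau\int_\T r$. For~(3): choose a smooth isotopy $\psi_\tau$ of symplectomorphisms with $\psi_0=\id$, $\psi_1=\psi$, and set $H_\tau=H\circ\psi_\tau$; since each $\psi_\tau$ is symplectic and $[\omega]|_{\pi_2(M)}=0$, the map $x\mapsto\psi_\tau^{-1}\circ x$ is an action-preserving bijection $\cp(H)\to\cp(H_\tau)$, so $\spec(H_\tau)=\spec(H)$ and $\ell\equiv0$.

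\emph{Monotonicity and Lipschitz continuity --- the core.} I would prove~(5) and deduce the rest: for $H_1\le H_2$ one has $E^+(H_1-H_2)\le0$, so~(5) gives~(4), and the sup-norm bound $|\sigma(H_1)-\sigma(H_2)|\le\|H_1-H_2\|_\infty$ is~(5) applied after sandwiching $H_1$ between $H_2\pm\|H_1-H_2\|_\infty$ and using the shift property. For~(5), fix $H_0,H_1$, put $D=H_1-H_0$, and consider the segment $H_\tau=H_0+\tau D$ and the continuous function $f(\tau)=\sigma(H_\tau)$. After a harmless $C^\infty$-small perturbation of $(H_0,H_1)$ (using~A2 and the continuity of $E^\pm$) one may assume the family is generic, so that $\bigcup_\tau\{\tau\}\times\spec(H_\tau)$ is a locally finite union of smooth arcs with birth/death points; since $\frac{d}{d\tau}\A_{H_\tau}(x_\tau)=\int_\T D(t,x_\tau(t))\,dt$ lies in $[E^-(D),E^+(D)]$ along any smooth family of orbits, every such arc has slope in $[E^-(D),E^+(D)]$. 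Because $f$ is continuous and spectral, it locally coincides with one of these arcs, and because Lemma~\ref{le:nowhere} forbids $\spec(H_\tau)$ from containing an interval, $f$ cannot drift away from the arc structure; hence all one-sided derivatives of $f$ lie in $[E^-(D),E^+(D)]$, and integrating gives $E^-(D)\le\sigma(H_1)-\sigma(H_0)\le E^+(D)$. Making this last step rigorous --- in particular controlling the behaviour of $f$ across the birth/death points so that no slope is violated --- is the delicate heart of the proposition.

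\emph{The formal consequences and the minimal case.} Assertion~(7) follows from~(4) and~(2): from the pointwise inequality $H_1(t,x)+\min_{y\in M}H_2(t,y)\le(H_1\#H_2)(t,x)\le H_1(t,x)+\max_{y\in M}H_2(t,y)$, which rests on $(H_1\#H_2)(t,x)=H_1(t,x)+H_2(t,(\phi_{H_1}^t)^{-1}(x))$, monotonicity followed by the shift gives the claimed two-sided bound. Assertion~(6) is the classical energy--capacity inequality and follows formally from~(7) applied to $H_1\#H_2$ once one knows that $\sigma(H_1\#H_2)=\sigma(H_2)$; the latter is the standard displacement fact, which for an abstract selector is again a homotopy argument in the spirit of~(2): along a suitable path from $H_2$ to $H_1\#H_2$ the spectrum stays inside $\spec(H_2)$ because the $H_1$-part is pushed off itself, so Lemma~\ref{le:nowhere} pins $\sigma$ down, and combining with $\sigma(H_2)\in[E^-(H_2),E^+(H_2)]$ (from~(5)) yields $|\sigma(H_1)|\le\|H_2\|$. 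For~(8) I would first upgrade~A3: given $H_0\le0$ supported in a Darboux ball $B^{2n}(R)$ with $\sigma(H_0)<0$, the symplectic rescalings $H_0^\lambda(z):=\lambda^2H_0(z/\lambda)$ ($\lambda\in(0,1]$) are supported in $B^{2n}(\lambda R)$ and satisfy $\spec(H_0^\lambda)=\lambda^2\spec(H_0)$, so $\lambda\mapsto\lambda^{-2}\sigma(H_0^\lambda)$ is continuous into the nowhere dense set $\spec(H_0)$, hence constant, giving $\sigma(H_0^\lambda)=\lambda^2\sigma(H_0)<0$ for every $\lambda$; moving these copies around by symplectomorphisms (via~(3)) and concentrating them in time makes available, inside any prescribed open region, non-positive Hamiltonians with negative selected value. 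Given $H\le0$ with $H\not\equiv0$, either $E^+(H)<0$ and then $\sigma(H)\le E^+(H)<0$ by~(5), or one places such a small copy $G\le0$ above $H$ (so $H\le G$) in a region where $H$ is strictly negative, and monotonicity gives $\sigma(H)\le\sigma(G)<0$. Finally~(9) follows from the standard incompressible-Liouville-domain argument: using the Liouville flow one rescales $H$ to Hamiltonians $\widehat H_t$ with $\spec(\widehat H_t)=e^t\spec(H)$ supported arbitrarily close to the skeleton, and combining this with monotonicity against a non-positive cap and the action estimate for orbits inside a Liouville domain forces $\sigma(H)\le0$; the ``in particular'' statement is then $0=\sigma(0)\le\sigma(H)\le0$ for non-negative such~$H$, by~(4).
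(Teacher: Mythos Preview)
Your treatment of (1)--(7) and (8) follows essentially the same route as the paper: the spectrum-is-nowhere-dense/continuity mechanism for (1)--(3), the bifurcation-diagram argument from \cite{HuLeSe15} for (5) (with (4) as a corollary), the Lipschitz bound for (7), and the Darboux/Liouville rescaling of the function from A3 for (8). Your sketch of (8) is slightly loose on time-dependence --- the step ``concentrating them in time'' hides the non-trivial identity $\sigma(\alpha(t)G(x)) = \sigma(cG)$ with $c=\int_\T \alpha$, which the paper proves by showing the spectra coincide along the obvious homotopy --- but the idea is there.

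Your argument for (9), however, has a genuine gap. You propose to rescale $H$ by the Liouville flow to $\widehat H_\tau$ supported near the skeleton and then bound it above by a ``non-positive cap''. But $H$ is an \emph{arbitrary} Hamiltonian supported in the incompressible Liouville domain~$U$; it may well be positive somewhere, and then so is each $\widehat H_\tau$, so no non-positive function can dominate it. The paper's argument is different and uses the minimality axiom in an essential way: one caps $\widehat H_\tau$ (for $\tau$ very negative) by a \emph{non-negative} function $F$ that equals a small constant $\varepsilon$ on a neighborhood of the skeleton, vanishes near $\partial U$, has only the two critical values $0$ and $\varepsilon$, and has no non-constant $1$-periodic orbits. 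By spectrality, $\sigma(F)\in\{0,\varepsilon\}$. The crux --- and this is exactly where A3 enters --- is to rule out $\sigma(F)=\varepsilon$: one takes a function $G$ as in your proof of (8), supported in $M\setminus U_{-1}$ with $-\varepsilon\le G\le 0$, so that $F-\varepsilon\le G$; then monotonicity and the shift give $\sigma(F)=\sigma(F-\varepsilon)+\varepsilon\le\sigma(G)+\varepsilon<\varepsilon$, forcing $\sigma(F)=0$. Now $\widehat H_\tau\le F$ gives $\sigma(\widehat H_\tau)\le 0$, and the scaling identity $\sigma(\widehat H_\tau)=e^{\tau}\sigma(H)$ yields $\sigma(H)\le 0$. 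Without this construction of $F$ and the use of A3 to pin down $\sigma(F)=0$, the conclusion does not follow; your ``action estimate for orbits inside a Liouville domain'' does not substitute for it.
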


\m
\ni
{\it Outline of the proof.}
Most properties are proved in~\cite[\S 3.1]{HuLeSe15}.
We focus on the new parts.
The first seven properties follow from the spectrality and the continuity axiom, 
together with the fact that the spectrum has empty interior.
This is immediate for Properties 1, 2,~3.
Properties 4,~5, 6 are proved in~\cite{HuLeSe15} for closed~$M$.
Their proof goes through for open~$M$ if one imposes the admissibility condition
in Lemma~21 of~\cite{HuLeSe15} only on a compact submanifold with boundary~$K \subset \Int (M)$
that contains the support of all the vector fields~$X_{H_s}$.
For Property~7 we compute, using Lipschitz continuity, 
\[
\sigma (H_1 \# H_2) - \sigma (H_1) \,\leq\, E^+(H_1 \# H_2 -H_1) \,=\, E^+(H_2 \circ (\phi_{H_1}^t)^{-1})
\,=\, E^+(H_2)
\]
and similarly 
\[
\sigma (H_1 \# H_2) - \sigma (H_1) \geq E^-(H_1 \# H_2 -H_1) = E^-(H_2).
\]
For the proof of Properties~8 and~9 we need two lemmas.
Let $U \subset M$ be a Liouville domain (the case $U=M$ is not excluded) and let $Y$ be the corresponding Liouville vector field.
Since $\overline U$ is compact and $Y$ points outwards along the boundary,
the flow $\phi^t_Y \colon U \to U$ of~$Y$ exists for all $t \leq 0$. 
The property $\cL_Y \omega = \omega$ of~$Y$
integrates to the conformality condition $(\phi_Y^t)^* \omega = e^t \omega$ for $t \leq 0$.
For each $\tau \leq 0$ define the Liouville subdomain
$U_\tau = \phi_Y^{\tau} (U)$, and for a Hamiltonian $H \colon \T \times M \to \R$ 
with support in~$U$ define the Hamiltonian
\[
H_\tau (t,x) \,:=\, 
\left\{
\begin{array} {ll}
e^\tau H (t,\phi_Y^{-\tau}(x)) & \mbox{if }\; x \in U_\tau, \\
0                         & \mbox{if }\; x \notin U_\tau.
\end{array}
\right.
\]
Then the support of $H_\tau$ lies in $U_\tau$.
The following lemma, that goes back to~\cite[proof of Prop.\ 5.4]{Pol14},
is taken from \cite[\S 3.2]{HuLeSe15}.

\begin{lem}  \label{le:Liouville}
Let $H \colon \T \times M \to \R$ be a Hamiltonian with support contained in 
a disjoint union of incompressible Liouville domains. Then
$\sigma (H_\tau) = e^\tau  \sigma(H)$ for all $\tau \leq 0$.
\end{lem}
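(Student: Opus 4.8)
The plan is to show the two inequalities $\sigma(H_\tau) \le e^\tau \sigma(H)$ and $\sigma(H_\tau) \ge e^\tau \sigma(H)$ separately, but in fact it suffices to prove one of them for \emph{all} pairs (Liouville domain, Hamiltonian with support inside it), since applying the resulting inequality with $H_\tau$ in place of $H$ and with the inner domain $U_\tau$, and using the flow identity $(H_\tau)_{-\tau} = H$ together with $U_\tau$ being again a disjoint union of incompressible Liouville domains, yields the reverse inequality after multiplying by $e^{-\tau} > 0$. So I would aim to prove, say, $\sigma(H_\tau) \le e^\tau \sigma(H)$ for every $\tau \le 0$, and then bootstrap.

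The heart of the matter is a scaling identity for the action spectrum together with a continuity argument. First I would record the relationship between periodic orbits and actions of $H$ and of $H_\tau$: the conformality $(\phi_Y^\tau)^*\omega = e^\tau \omega$ shows that $x \mapsto \phi_Y^\tau \circ x$ is a bijection between the contractible $1$-periodic orbits of $X_H$ that lie in~$U$ and those of $X_{H_\tau}$ that lie in~$U_\tau$, and that under this bijection $\A_{H_\tau}(\phi_Y^\tau \circ x) = e^\tau \A_H(x)$; meanwhile the constant orbits outside the supports all carry action $\int_\T h_i(t)\,dt$ coming from the locally-time-dependent values near $\partial U$ (which are $0$ here since $H$ and $H_\tau$ are supported in the interior of the Liouville domains). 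Consequently $\spec(H_\tau) \cap (\R\setminus\{0\}$-part$) = e^\tau \,\spec(H)$, with the ``boundary'' value $0$ possibly present in both. The key consequence is that $e^\tau \sigma(H) \in \spec(H_\tau)$.

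Now the argument is the standard deformation-and-spectrality trick, exactly as in the cited reference~\cite[\S 3.2]{HuLeSe15}: consider the path $\tau \mapsto H_\tau$ for $\tau \in [\tau_0, 0]$ (a fixed $\tau_0 \le 0$), which is continuous in the $C^\infty$-topology, hence $\tau \mapsto \sigma(H_\tau)$ is continuous by axiom~A2. The function $g(\tau) := e^{-\tau}\sigma(H_\tau)$ is then continuous, satisfies $g(0) = \sigma(H)$, and by spectrality takes values in $e^{-\tau}\spec(H_\tau) \subseteq \spec(H) \cup \{0\}$ for each~$\tau$; since $\spec(H)$ has empty interior (Lemma~\ref{le:nowhere}) and $0 \in \spec(H)$ as well (the constant orbits outside the support have action~$0$), the set $\spec(H) \cup \{0\} = \spec(H)$ is a compact set with empty interior, and a continuous function into such a set is locally constant, hence constant on the connected interval $[\tau_0,0]$. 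Therefore $g(\tau) = \sigma(H)$, i.e.\ $\sigma(H_\tau) = e^\tau \sigma(H)$, for all $\tau \le \tau_0$; letting $\tau_0 \to -\infty$ gives the claim for all $\tau \le 0$.

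The main obstacle, and the only place requiring genuine care, is the spectral computation $e^{-\tau}\spec(H_\tau) \subseteq \spec(H) \cup\{0\}$: one must verify that every $1$-periodic orbit of $X_{H_\tau}$ either sits inside $U_\tau$ (and is then accounted for by the conformal rescaling of an orbit of $X_H$) or sits in the region where $H_\tau$ is locally a function of time only, where it is constant with action $0$; the incompressibility hypothesis enters to guarantee that contractibility is preserved under the identification $x \mapsto \phi_Y^\tau \circ x$, i.e.\ a loop contractible in~$U$ maps to a loop contractible in~$M$ and conversely (so that no spurious action values are gained or lost from topological sectors). Once this bookkeeping is in place, the continuity/locally-constant argument is routine.
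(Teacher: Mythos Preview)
Your proposal is correct and matches the paper's approach exactly: establish the spectral identity $\spec(H_\tau)=e^\tau\spec(H)$, then use spectrality and $C^\infty$-continuity to see that $\tau\mapsto e^{-\tau}\sigma(H_\tau)$ is a continuous function into the nowhere dense set $\spec(H)$, hence constant and equal to $\sigma(H)$.

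Two minor corrections. First, the opening bootstrap paragraph is unnecessary and not quite well-posed (the identity $(H_\tau)_{-\tau}=H$ asks for a positive deformation parameter, outside the stated setup); your direct argument already yields equality, so just drop it. Second, your account of where incompressibility enters is slightly off: contractibility in~$M$ is automatically preserved under $x\mapsto\phi_Y^\tau\circ x$, since $\phi_Y^\tau$ is isotopic to the identity on~$U$. What incompressibility actually buys is that a loop in~$U$ which is contractible in~$M$ is contractible in~$U$, so one may choose a capping disk $\bar x$ lying entirely in~$U$; only then does the conformality $(\phi_Y^\tau)^*\omega=e^\tau\omega$ (or equivalently Stokes with the Liouville primitive $\lambda=\iota_Y\omega$ on~$U$) give the needed rescaling $\int_\D(\phi_Y^\tau\circ\bar x)^*\omega=e^\tau\int_\D\bar x^*\omega$ of the area term in the action.
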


For the proof one shows that $\spec (H_\tau) = e^\tau \spec (H)$, and so the claim follows from 
the spectrality and continuity axioms of~$\sigma$.

\begin{lem} \label{le:Gauto}
If $G$ is autonomous with $G \leq 0$ and $G \neq 0$, then $\sigma (G) <0$.
\end{lem}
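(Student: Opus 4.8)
The strategy is to reduce the statement about an arbitrary non-positive autonomous $G$ to the concrete model Hamiltonians of Remark~\ref{twocrit}, for which Proposition~\ref{p:two} (together with the minimality axiom, which we will actually use only indirectly, via Property~8 --- wait, we are proving Property~8, so rather via~A3 and Lemma~\ref{le:Liouville}) gives a negative value. More precisely, since $G \leq 0$ and $G \not\equiv 0$, there is a point $p \in M$ with $G(p) < 0$ and, by continuity, a small symplectically embedded ball $B$ around $p$ on which $G \leq -c < 0$ for some $c>0$. I would first construct inside $B$ a radial model Hamiltonian $F$ of the type in Remark~\ref{twocrit}: supported in $B$, non-positive, with $F' < 1$ so that all non-constant periodic orbits have period $>1$, with exactly two critical values $\min F = F(0)$ and $0$, and --- crucially --- with $F \geq G$ on all of $M$ (this can be arranged by taking $B$ small and $\|F\|$ small, using only that $G < 0$ on $B$).

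\textbf{Main steps.} Step 1: By hypothesis on $\sigma$ being a \emph{minimal} action selector, axiom~A3 gives \emph{some} non-positive $H_0$ supported in a symplectically embedded ball with $\sigma(H_0) < 0$. Step 2: Using the coordinate-change property (Property~3 of Proposition~\ref{p:formal}) and Lemma~\ref{le:Liouville} applied to the ball (an incompressible Liouville domain), rescale and move $H_0$ so that it is supported in our ball $B$ and still has $\sigma < 0$; call the result $F$. Actually a cleaner route: take $F$ to be the explicit radial model of Remark~\ref{twocrit} and invoke Proposition~\ref{p:two} directly, but that proposition is stated for the specific selector $A$, whereas here $\sigma$ is arbitrary. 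So the honest argument must route through A3 and the formal properties: from any witness $H_0$ of A3, scale it down by Lemma~\ref{le:Liouville} so that its support sits in $B$, then use the coordinate change (symplectic isotopy carrying the embedded ball to $B$) to land with a Hamiltonian $F \leq 0$ supported in $B$, with $\sigma(F) < 0$, and --- after a further Liouville rescaling with small $\tau$ --- with $\|F\|$ as small as we like, hence $F \geq G$ on $M$. Step 3: Apply monotonicity (Property~4): $G \leq F$ forces $\sigma(G) \leq \sigma(F) < 0$.

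\textbf{Main obstacle.} The delicate point is Step 2: arranging that the A3-witness, after the symplectic moves we are allowed (Liouville rescaling, symplectic isotopy, shift), dominates the given $G$ pointwise. The symplectically embedded ball from A3 need not be related to a neighborhood of $p$, but since $G<0$ on a whole ball $B$ around $p$, and Lemma~\ref{le:Liouville} lets us shrink the support of the witness into an arbitrarily small ball with $\sigma$ scaling by $e^\tau \to 0$, we can first shrink, then transport by an ambient symplectic isotopy (Property~3) into a tiny ball well inside $B$, and finally shrink once more so that the (now very small) sup-norm of the witness is $\leq c = -\max_B G$. One must check that "shrinking" via $\phi_Y^\tau$ indeed decreases the sup-norm and keeps the function non-positive --- both are immediate from the formula $H_\tau = e^\tau H \circ \phi_Y^{-\tau}$ --- and that the isotopy used in Property~3 can be taken with support in $B$, which is standard since any two symplectically embedded balls of small enough radius in a connected manifold are related by a symplectic isotopy supported in a slightly larger ball. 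With these in hand the pointwise bound $G \leq F$ holds and monotonicity closes the argument.
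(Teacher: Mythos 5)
Your proposal is correct and follows essentially the same route as the paper: take the witness $H$ of Axiom~A3, shrink it by the Liouville flow of its ball so that $\sigma(H_\tau)=e^\tau\sigma(H)<0$ (Lemma~\ref{le:Liouville}) and its sup-norm becomes small, transport it by a Hamiltonian isotopy into a region where $G$ is strictly negative (Property~3), and conclude by monotonicity from $G\leq H_\tau\circ\psi^{-1}$. The paper performs the isotopy first and then shrinks, and does not need the isotopy to be supported near $B$ (Property~3 allows any ambient Hamiltonian isotopy), but these are cosmetic differences.
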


\begin{proof}
Choose a non-empty open set $U \subset M$ such that $G |_{\overline U} < 0$.
Let $H$ and $B \subset M$ be a function and a symplectically embedded ball as in Axiom~A3, 
and let $0 \in B$ be the center of~$B$.
Take $x \in U$, and choose a Hamiltonian isotopy~$\psi$ of~$M$ with $\psi (0) =x$.
Then we find $\tau < 0$ such that $\psi (B_\tau) \subset U$.
Choosing $\tau$ smaller if necessary, we have $G \leq H_\tau \circ \psi^{-1}$.
Using Properties 3 and~4 and Lemma~\ref{le:Liouville} we obtain
\[
\sigma (G) \leq \sigma (H_\tau \circ \psi^{-1}) = \sigma (H_\tau) = e^{\tau} \sigma (H) <0.
\] 
\end{proof}

Property 8 now readily follows: 
Given $H \leq 0$ with $H \neq 0$ we find $t_0 \in (0,1)$ and $x_0 \in M$ with $H(t_0,x_0) < 0$. 
We can thus construct a function of the form $\alpha (t) \1 G(x)$ with $\alpha$ a non-negative
bump function around~$t_0$ and $G$ as in Lemma~\ref{le:Gauto} such that $H \leq \alpha G$.
Then $\sigma (H) \leq \sigma (\alpha G) = \sigma (c G) < 0$, where $c = \int_0^1 \alpha (t) \, dt >0$.
Indeed, the first inequality holds by monotonicity, and the last inequality by Lemma~\ref{le:Gauto}.
To see the equality $\sigma (\alpha G) = \sigma (cG)$, choose a smooth family of functions $\alpha_s(t)$, $s \in [0,1]$,
such that $\alpha_0(t) = \alpha(t)$, $\alpha_1(t) = c$ is constant, and $\int_0^1 \alpha_s(t)\, dt = c$ for all~$s$.
Then the Hamiltonian functions $H_s (t,x) := \alpha_s(t) \1  G(x)$ all generate the same time-1 map.
Lemma~\ref{le:sigmaindep} below combined with the shift property~2 now yield 
$\sigma (\alpha G) = \sigma (H_0) = \sigma (H_1) = \sigma (cG)$.
Let us give a direct and more elementary proof of $\sigma (H_0) = \sigma (H_1)$:
Since $X_{H_s} = \alpha_s(t) \1 X_G$, the time-1 orbits of~$H_s$ are reparametrisations 
of each other. 
Let $x_0$ be a 1-periodic orbit of $H_0$, and denote by $x_s$ the 1-periodic orbit of~$H_s$
with the same trace.
Then the area term $\int_{\D} \bar{x}_s^* \1 \omega$ of the action~$\A_{H_s}(x_s)$
does not depend on~$s$, since we can take the same disc for each~$s$, 
and the same holds for the Hamiltonian term
$$
\int_{\T} H_s(t,x_s(t))\, dt \,=\, \int_{\T} \alpha_s(t) \1  G(x_s(t)) \,dt \,=\, 
G(x_1(0)) \int_{\T} \alpha_s(t)\,dt \,=\, G(x_1(0)) \2 c
$$
since the autonomous Hamiltonian $cG$ is constant along its orbit~$x_1$.
It follows that $\spec (H_s)$ does not depend on~$s$.
Since this set has empty interior and $\sigma$ is continuous, $\sigma (H_s)$ neither depends on~$s$.

\s
We now prove Property~9. Let $U\subset M$ be a Liouville domain and choose $\varepsilon >0$ so small 
that there is a smooth function $F \colon M \to \R$ such that 
\[
F(x) = \varepsilon \quad  \mbox{on } U_{-2}, \qquad F(x) = 0 \quad \mbox{on  } M \setminus U_{-1}, 
\]
such that $\varepsilon$ and $0$ are the only critical values of $F$, and such that~$X_F$
has no non-constant 1-periodic orbits.
By spectrality, $\sigma (F) \in \{0,\varepsilon \}$.
Take $G$ as in Lemma~\ref{le:Gauto} with support in $M \setminus U_{-1}$ and such that $G \geq -\varepsilon$.
Then $F-\varepsilon \leq G$ and hence $\sigma (F-\varepsilon) \leq \sigma (G) < 0$.
Together with the shift property, $\sigma (F) = \sigma (F-\varepsilon)+\varepsilon < \varepsilon$, 
whence $\sigma (F) =0$.

Given $H \in \ch (U)$ we find $\tau <0$ so small that $H_\tau \leq F$. 
Then $\sigma (H_\tau) \leq \sigma (F) =0$ by monotonicity, and so $\sigma (H) = e^\tau \sigma (H_\tau) \leq 0$ by Lemma~\ref{le:Liouville}.
\proofend

\begin{figure}[h]   
 \begin{center}
  \psfrag{f}{$f$} \psfrag{G}{$G$} \psfrag{H}{$H_\tau$} \psfrag{e}{$\varepsilon$}
  \psfrag{-e}{$-\varepsilon$} \psfrag{U2}{$\partial U_{-2}$} \psfrag{U1}{$\partial U_{-1}$}  \psfrag{U}{$\partial U$}
  \leavevmode\includegraphics{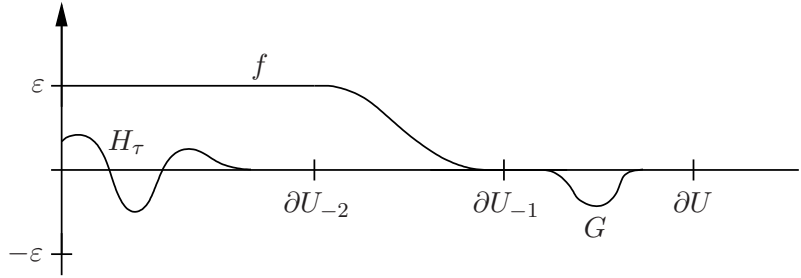}
 \end{center}
 \caption{The functions $f$, $G$, and $H_\tau$}   \label{fig.fGH}
\end{figure}

\subsection{Path independence}   \label{ss:path}
Let $\sigma \colon \ch (M) \to \R$ be an action selector.
Every function $H \in \ch (M)$ generates a Hamiltonian diffeomorphism~$\phi_H^1$.
Does $\sigma$ induce a map $\Ham (M,\omega) \to \R$ on the group formed by these diffeomorphisms?

If two functions in $\ch (M)$ differ by a constant, they have the same time-1 map.
In this paragraph we therefore restrict $\sigma$ to normalized functions:
If $M$ is closed, $H$ is normalized if $\int_M H(t,\cdot ) \,\omega^n =0$ for all $t \in \T$.
If $M$ is open, we fix an end~$e$ of~$\Int (M)$ and say that $H$ is normalized if 
for each $t \in \T$ the function $H(t,\cdot)$ vanishes on~$e$; 
notice that this normalization depends on the choice of~$e$.
Write $H_0 \sim H_1$ if $H_0,H_1$ are the endpoints of a smooth path~$H_s$ of normalized functions 
that all generate the same Hamiltonian diffeomorphism.

\begin{lem} \label{le:sigmaindep}
If $H_0 \sim H_1$, then $\sigma (H_0) = \sigma (H_1)$.
\end{lem}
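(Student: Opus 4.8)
The plan is to prove the stronger fact that the action spectrum $\spec(H_s)$ does not depend on $s$ at all; granting this, the continuity axiom and Lemma~\ref{le:nowhere} finish the proof at once. To establish the invariance of the spectrum, I would argue as follows. Since all $H_s$ generate the same Hamiltonian diffeomorphism $\phi := \phi_{H_s}^1$, the $1$-periodic orbits of $X_{H_s}$ are exactly the loops $x_s^p(t) := \phi_{H_s}^t(p)$, with $p$ running over the fixed-point set $\mathrm{Fix}(\phi)$, which is the same for all $s$. For a fixed $p$, the map $(s,t) \mapsto x_s^p(t)$ is a free homotopy of loops, so $x_0^p$ is contractible if and only if $x_s^p$ is contractible for every $s$. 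Hence $\spec(H_s) = \{\A_{H_s}(x_s^p) \mid p \in \mathrm{Fix}(\phi),\ x_0^p \text{ contractible}\}$, and it suffices to show that $s \mapsto \A_{H_s}(x_s^p)$ is constant for each such $p$.

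Fix such a $p$, abbreviate $x_s := x_s^p$, and choose capping discs $\bar x_s$ depending smoothly on $s$. A short computation with Stokes' theorem, using $\partial_t x_s = X_{H_s}(t,x_s)$, gives
\[
\frac{d}{ds}\,\A_{H_s}(x_s) \,=\, \int_\T (\partial_s H_s)(t,x_s(t))\,dt .
\]
To see that this integral vanishes I would introduce the vector field $Z_s^t := (\partial_s \phi_{H_s}^t) \circ (\phi_{H_s}^t)^{-1}$ on $M$, which has compact support in $\Int M$ (as $\phi_{H_s}^t$ is the identity outside a fixed compact set) and vanishes identically for $t \in \{0,1\}$, because $\phi_{H_s}^0 = \id$ and $\phi_{H_s}^1 = \phi$ are both independent of $s$. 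One checks that $Z_s^t$ is Hamiltonian --- the de Rham class of the closed $1$-form $\iota_{Z_s^t}\omega$ is independent of $t$ by the zero-curvature relation for the two-parameter flow $\phi_{H_s}^t$, hence equals its value $0$ at $t=0$; let $K_s^t$ be its normalized Hamiltonian, so that $K_s^0 = K_s^1 = 0$. The same zero-curvature relation then reads $\partial_s H_s(t,\cdot) = \partial_t K_s^t + dK_s^t(X_{H_s(t,\cdot)})$, up to an additive constant which vanishes under the chosen normalization. Evaluated along $x_s$, the right-hand side is precisely $\frac{d}{dt}\bigl(K_s^t(x_s(t))\bigr)$, since $\dot x_s = X_{H_s}(t,x_s)$; hence its integral over $\T$ equals $K_s^1(x_s(1)) - K_s^0(x_s(0)) = 0$. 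Therefore $\A_{H_s}(x_s)$ is constant in $s$, and $\spec(H_s) = \spec(H_0) =: \Sigma$ for all $s$.

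To conclude, note that by the continuity axiom $s \mapsto \sigma(H_s)$ is continuous on $[0,1]$, while by the spectrality axiom it takes its values in $\spec(H_s) = \Sigma$, which has empty interior by Lemma~\ref{le:nowhere}. The image of the connected interval $[0,1]$ is then a connected subset of $\R$ with empty interior, hence a single point, and so $\sigma(H_0) = \sigma(H_1)$.

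The one genuinely delicate step is the vanishing of $\int_\T (\partial_s H_s)(t,x_s(t))\,dt$: one has to check that $Z_s^t$ is Hamiltonian, keep the signs in the zero-curvature relation straight, and verify that the normalization constant really drops out --- using the mean-zero normalization in the closed case, and vanishing near the distinguished end in the open case, where one should also make sure this normalization is compatible with the compact support of $Z_s^t$. The rest is routine; alternatively, the invariance of the spectrum along the homotopy $H_s$ may be quoted from~\cite{Sch00}.
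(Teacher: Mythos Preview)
Your proof is correct and follows the same overall strategy as the paper: establish that $\spec(H_s)$ is independent of~$s$, then conclude by continuity, spectrality, and the fact that the spectrum has empty interior. The paper outsources the spectrum-invariance step to \cite[\S 3.1]{Sch00} and \cite[Cor.\ 6.2]{FrSch07}, additionally remarking that the needed input (existence of a contractible $1$-periodic orbit in the closed case, of a constant orbit of action zero near the chosen end in the open case) is guaranteed by Propositions~\ref{comp} and~\ref{surj}; you instead spell out the zero-curvature computation directly, which is essentially the content of those references. Your treatment is self-contained and the delicate points you flag (Hamiltonian nature of $Z_s^t$, handling of the normalization constant in both the closed and open cases) are exactly the ones that need care; your final remark that one could alternatively quote~\cite{Sch00} is precisely what the paper does.
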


\proof
The claim follows from the continuity of $\sigma$ if one knows that the sets $\spec (H_s)$
are independent of~$s$.
This in turn easily follows 
for closed~$M$ if the flow of~$H_0$ has a contractible 1-periodic orbit, see \cite[\S 3.1]{Sch00}, and 
for open~$M$ if the flow of~$H_0$ has a constant orbit of action zero, see \cite[Cor.\ 6.2]{FrSch07}.
The existence of such an orbit for closed~$M$
follows from Propositions~\ref{comp} and~\ref{surj}, see Remark~\ref{existence},
and for open~$M$ is obvious (take a point in the end~$e$ off the support of~$H_0$).
\proofend

The lemma implies that $\sigma$ descends from the set of normalized Hamiltonians to the universal cover $\widetilde \Ham (M,\omega)$,
where for open $M$ we denote by $\Ham (M,\omega)$ the group of Hamiltonian diffeomorphisms of~$M$ generated by normalized Hamiltonians. 
Does $\sigma$ further descend to $\Ham (M,\omega)$?
In other words, is it true that $\sigma (G) = \sigma (H)$ if $\phi_G = \phi_H$ for normalized $G , H$\,?
This is so if one knows that $\phi_G = \phi_H$ for normalized $G,H$ implies that $\spec (G) = \spec (H)$,
and that $\sigma$ satisfies the triangle inequality, see~\cite[proof of Prop.\ 7.1]{FrSch07}.
The first requirement always holds true.

\begin{lem} \label{le:specind}
Let $(M,\omega)$ be a symplectically aspherical manifold. 
If $\phi_G = \phi_H$ for normalized Hamiltonians $G,H \in \ch (M)$, then
$\spec (G) = \spec (H)$.
\end{lem}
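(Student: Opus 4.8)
Since the hypothesis and conclusion are symmetric in $G$ and $H$, it suffices to prove $\spec(G)\subseteq\spec(H)$. The plan is to fix a contractible $1$-periodic orbit $x$ of $X_G$ and put $p:=x(0)$; since $\phi:=\phi_G^1=\phi_H^1$ fixes $p$, the curve $y(t):=\phi_H^t(p)$ is automatically a $1$-periodic orbit of $X_H$, and I must show that $y$ is contractible with $\A_H(y)=\A_G(x)$. The point is that the two Hamiltonian isotopies $\{\phi_G^t\}$ and $\{\phi_H^t\}$ both run from $\id$ to $\phi$, so they differ by the Hamiltonian loop $\ell^t:=\phi_G^t\circ(\phi_H^t)^{-1}$, which is generated by a normalized Hamiltonian $F$ with $\phi_F^1=\id$. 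By the standard description of the actions of the orbits of a composition of Hamiltonian isotopies (see \cite[\S 3.1]{Sch00}), both the contractibility of $y$ and the identity $\A_G(x)=\A_H(y)+\A_F(v)$ — where $v(t):=\ell^t(p)$ is the $1$-periodic orbit of $X_F$ through $p$ — will follow once one knows the following sub-lemma: on a symplectically aspherical manifold, every normalized Hamiltonian $F$ with $\phi_F^1=\id$ has all $1$-periodic orbits of $X_F$ contractible and of action $0$. So the whole problem is reduced to this sub-lemma.

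To prove the sub-lemma, I would argue as follows. Since $\phi_F^1=\id$, every point $q\in M$ is the initial value of a $1$-periodic orbit $w_q(t):=\phi_F^t(q)$ of $X_F$. The set of $q$ for which $w_q$ is contractible is open and closed in $M$ and, by Remark~\ref{existence}, non-empty (for open $M$ one takes a point in the fixed end, off the support of $X_F$), hence it is all of $M$ by connectedness. Each $w_q$ is a critical point of $\A_F$, so $q\mapsto\A_F(w_q)$ has vanishing differential and is therefore constant, say equal to $c_0$; thus $\spec(F)=\{c_0\}$. Averaging $\A_F(w_q)$ over $q\in M$, the Hamiltonian contribution becomes (a multiple of) $\int_{\T}\int_M F(t,\cdot)\,\omega^n\,dt$, which vanishes because $F$ is normalized and each $\phi_F^t$ preserves $\omega^n$; hence $c_0$ equals the mean over $q\in M$ of the symplectic areas of discs capping the loops $w_q$ (these areas being well defined since $[\omega]|_{\pi_2(M)}=0$). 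This mean is precisely the value of the action homomorphism $\pi_1(\Ham(M,\omega))\to\R$ on the class of $\ell$, and it vanishes for symplectically aspherical manifolds; in the open case one evaluates at a point in the end and invokes the corresponding result of \cite{FrSch07}. Therefore $c_0=0$, and the sub-lemma follows.

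I expect the one genuine obstacle to be this last vanishing — that the action homomorphism $\pi_1(\Ham(M,\omega))\to\R$ is trivial on a symplectically aspherical manifold. This is the only non-elementary step, and the only place where the hypothesis $[\omega]|_{\pi_2(M)}=0$ is used in an essential way; I would cite \cite[\S 3.1]{Sch00} for it rather than reprove it, the remainder of the argument being routine manipulation of Hamiltonian flows and cappings together with Remark~\ref{existence} and the connectedness of $M$.
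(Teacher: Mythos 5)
Your reduction is the same one the paper uses: everything comes down to showing that a normalized Hamiltonian loop has all its $1$-periodic orbits contractible and of action zero, i.e.\ that the area (``action'') homomorphism $\pi_1(\Ham(M,\omega))\to\R$ vanishes. The elementary parts of your argument are fine: the set of $q$ with $w_q$ contractible is open, closed and non-empty; $q\mapsto\A_F(w_q)$ is locally constant because each $w_q$ is a critical point of $\A_F$; averaging over $M$ kills the Hamiltonian term by normalization; and for open $M$ evaluation at a point of the fixed end settles the matter, as in \cite{FrSch07}.

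The gap is in the closed case, at exactly the step you flag: the vanishing of the action homomorphism cannot be cited from \cite[\S 3.1]{Sch00}. What that section provides is the invariance of the spectrum along a \emph{path} of normalized Hamiltonians with fixed time-one map (this is what Lemma~\ref{le:sigmaindep} uses); the vanishing on all of $\pi_1(\Ham(M,\omega))$ is Schwarz's Theorem~1.1, and his Floer-theoretic proof requires the additional hypothesis $c_1|_{\pi_2(M)}=0$, which the present lemma does not assume. Since this vanishing \emph{is} the content of the lemma --- the rest is bookkeeping --- deferring it to a reference that does not cover it leaves the proof incomplete. The paper's argument is devoted precisely to this point: from the loop one builds a Hamiltonian fibration $E\to S^2$ with coupling form $\omega_E$, observes that the claim amounts to $\int_{S^2}s^*\omega_E=0$ for a section $s$, and deduces this from McDuff's monodromy results \cite{McDuff10}, namely that for a fibered almost complex structure the closed $2n$-dimensional moduli space of holomorphic sections has evaluation map of non-vanishing degree. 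To complete your proof you must either add the hypothesis $c_1|_{\pi_2(M)}=0$ and invoke Schwarz's argument, or reproduce this fibration-plus-McDuff argument.
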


\proof
This is again easy to verify for $M$ open~\cite[Cor.\ 6.2]{FrSch07}.
For $M$ closed the proof is more difficult.
Under the additional assumption that also the first Chern class of~$(M,\omega)$ vanishes on~$\pi_2(M)$, 
the proof is given by Schwarz~\cite[Theorem\ 1.1]{Sch00}.
One can dispense with this assumption thanks to results of McDuff~\cite{McDuff10}.
We give a rough outline of the argument. 

Let $\gamma$ be the loop in $\Ham (M,\omega)$ obtained by first going along $\phi_G^t$ 
and then along~$\phi_H^{1-t}$, for $t \in [0,1]$. 
To $\gamma$ one associates a bundle~$E$ with fiber~$M$ and base~$S^2$ by gluing two copies of the
trivial bundle $M \times D$ over the closed disk along their boundaries
via the loop~$\gamma$.
The total space~$E$ comes with a closed $2$-form~$\omega_E$, the so-called coupling form, 
that restricts to~$\omega$ on each fiber.
The assertion of the lemma will follow if we can show that 
\begin{equation} \label{e:S0}
\int_{S^2} s^* \omega_E \,=\, 0
\end{equation}
for one and hence any section $s \colon S^2 \to E$, see~\cite[Lemma~4.6]{Sch00}.

Let $J$ be an almost complex structure on~$E$ that is $\omega$-compatible on each fibre 
and such that the projection $E \to S^2$ is $J$-$i$-holomorphic, where $i$ is the usual complex
structure on~$S^2$.
Since $[\omega]$ vanishes on~$\pi_2(M)$, for a generic choice of~$J$ the space~$\mathcal{M}(J)$ of
holomorphic sections $s \colon S^2 \to E$ is a closed manifold, and its dimension is~$2n$.
Further, for given $z \in S^2$ the evaluation map 
$$
\mathcal{M}(J) \to M, \quad \ev_{z} (u) = u(z)
$$
has non-vanishing degree, see \cite[p.\ 117]{McDuff10}.
Now \eqref{e:S0} follows exactly as in the proof of Corollary~4.14 in~\cite{Sch00}.
\proofend

On the other hand, we do not know whether the triangle inequality holds for our action selector~$A$.
At least for Liouville domains, one can go around the triangle inequality 
and prove the following result.

\begin{prop} \label{p:[H]}
Assume that $(M,\omega)$ is a Liouville domain.
Then for any action selector~$\sigma$ on~$\ch (M)$ it holds that
$\sigma (G) = \sigma (H)$ whenever $G,H$ are normalized Hamiltonians with $\phi_G = \phi_H$.
\end{prop}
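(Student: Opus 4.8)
The plan is to reduce the statement to path-independence (Lemma~\ref{le:sigmaindep}) by showing that on a Liouville domain, any two normalized Hamiltonians $G,H$ with $\phi_G^1 = \phi_H^1$ can in fact be joined by a smooth path of normalized Hamiltonians generating $\phi_G^1$ at every time~$1$; equivalently, the loop $\gamma(t) = \phi_G^t \circ (\phi_H^t)^{-1}$ in $\Ham(M,\omega)$ is contractible with a contraction lifting to a loop of \emph{normalized} Hamiltonians based at the zero function. In other words, the point is to prove that $\pi_1\bigl(\Ham(M,\omega)\bigr)$ acts trivially on $\sigma$, and for a Liouville domain this should follow because $\Ham(M,\omega)$ (generated by Hamiltonians whose vector fields are compactly supported in the interior and which are normalized by vanishing on the fixed end~$e$) is in a suitable sense ``contractible at infinity'': every Hamiltonian loop can be pushed, via the Liouville flow $\phi_Y^\tau$, into an arbitrarily small Liouville subdomain $U_\tau$.

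The key steps, in order, would be: (1) Given normalized $G,H$ with $\phi_G^1=\phi_H^1$, form the loop $\gamma$ and let $R(t,x)$ be a normalized Hamiltonian generating it, so $R$ has compactly supported vector field in $\Int(M)$. (2) Use the conformal rescaling from Section~\ref{s:axiom}: for $\tau \le 0$ the rescaled loop generated by $R_\tau$ (with $R_\tau(t,x) = e^\tau R(t, \phi_Y^{-\tau}(x))$ on $U_\tau$ and $0$ elsewhere) is a Hamiltonian loop supported in the small domain $U_\tau$, and by Lemma~\ref{le:Liouville} its spectral data rescale by $e^\tau$. Since $\spec(R)$ has empty interior (Lemma~\ref{le:nowhere}) and $\spec(R_\tau)=e^\tau\spec(R)$ while these spectra must depend continuously on the deformation parameter, one concludes $\spec(R)=\{0\}$ — hence the area terms of all contractible $1$-periodic orbits of~$R$ vanish. (3) Deduce from this that $\spec(G)=\spec(H)$: indeed $G$, $H$, and $R$ are related by the composition formula for Hamiltonians generating $\phi_G^t$, $\phi_H^t$, and $\gamma$, and the action of a $1$-periodic orbit of~$G$ differs from that of the corresponding orbit of~$H$ precisely by a term governed by $\spec(R)$ (this is the standard computation underlying \cite[\S3.1]{Sch00} and \cite[Cor.~6.2]{FrSch07}), so $\spec(R)=\{0\}$ forces $\spec(G)=\spec(H)$. (4) Finally invoke continuity of~$\sigma$ along the straight-line path $sH+(1-s)G$ (or any smooth path of normalized functions from $G$ to~$H$), noting that, although these intermediate functions need not generate $\phi_G^1$, a more careful argument gives a path of normalized Hamiltonians whose spectra are all equal to the common value $\spec(G)=\spec(H)$; since this set has empty interior, $\sigma$ is constant along it, giving $\sigma(G)=\sigma(H)$.

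The main obstacle I expect is step~(4): unlike in Lemma~\ref{le:sigmaindep}, where the path $H_s$ already consists of Hamiltonians generating the same diffeomorphism, here a priori we only control the endpoints. The cleanest way around this is to run the argument of~\cite[proof of Prop.~7.1]{FrSch07} but replacing their use of the triangle inequality by the explicit computation in step~(2)--(3): one shows directly that the loop $\gamma$ has vanishing ``action spectrum'' and hence that concatenating a Hamiltonian isotopy with the loop~$\gamma$ does not change the spectrum, so that $\spec(G)=\spec(H)$; then $\sigma(G)$ and $\sigma(H)$ both lie in this common nowhere-dense set, and a connectedness/continuity argument along the family obtained by inserting $\gamma$ run at speeds $s\in[0,1]$ pins them to the same value. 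The Liouville structure is used only to guarantee, via the rescaling $\phi_Y^\tau$ and Lemma~\ref{le:Liouville}, that the loop~$\gamma$ is ``spectrally trivial''; everything else is formal manipulation with the spectrality and continuity axioms.
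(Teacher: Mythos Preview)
Your step~(4) is where the argument breaks, and neither of your proposed fixes works. The straight-line path $sH+(1-s)G$ has no reason to have constant spectrum, as you note. The family ``inserting $\gamma$ run at speed $s\in[0,1]$'' fares no better: if $R^{(s)}$ generates $t\mapsto \gamma(st)$, then $G\# R^{(s)}$ has time-$1$ map $\phi_G\circ\gamma(s)$, which equals $\phi_G$ only for $s\in\{0,1\}$, so the intermediate spectra need not agree and continuity of~$\sigma$ gives you nothing. More basically, your steps (1)--(3) aim at $\spec(G)=\spec(H)$, but that is already Lemma~\ref{le:specind}; the whole difficulty is precisely to pass from equal spectra to equal selector values without the triangle inequality, and for that you need an actual path, not just a spectral statement.

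The paper uses the Liouville rescaling in the right place --- to build the path, not to study $\spec(R)$. Set $L=G^-\# H$, so that $H=G\# L$ identically and $L$ generates the loop $\phi_G^{-t}\phi_H^t$. For each $\tau\le 0$ the rescaled Hamiltonian $L_\tau$ (your $R_\tau$, extended by $e^\tau h_j(t)$ near the boundary components) still generates a loop, since $\phi_{L_\tau}^1=\phi_Y^\tau\circ\phi_L^1\circ\phi_Y^{-\tau}=\id$. Hence $\{G\# L_\tau\}_{\tau\le 0}$ is a smooth family of normalized Hamiltonians \emph{all generating $\phi_G$}, so Lemma~\ref{le:sigmaindep} gives $\sigma(H)=\sigma(G\# L_0)=\sigma(G\# L_\tau)$ for every~$\tau$. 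Now the Composition property~7 (valid for any action selector, no triangle inequality required) yields
\[
\sigma(H)\,=\,\sigma(G\# L_\tau)\,\le\,\sigma(G)+E^+(L_\tau)\,=\,\sigma(G)+e^\tau E^+(L),
\]
and letting $\tau\to-\infty$ gives $\sigma(H)\le\sigma(G)$; symmetry finishes. The point you missed is that the Liouville flow should be used to make the loop \emph{small in the Hofer norm} while keeping its time-$1$ map the identity --- this is what produces the bridge from $H$ to~$G$.
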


\proof
The claim is shown for $(\R^{2n},\omega_0)$ in~\cite[Proposition 11 in \S 5.4]{hz94}.
Their proof can be adapted to Liouville domains.
We give a somewhat streamlined argument. 

Let $L$ be a normalized Hamiltonian such that $\phi_L = \id$, that is, 
$\phi_L^t$, $t \in [0,1]$, is a loop in $\Ham (M,\omega)$.
The ends of $\Int (M)$ are in bijection with the components $N_1, \dots, N_k$ of the boundary of~$M$.
By assumption, $L(x,t) = h_j(t)$ for $x$ near~$N_j$, and one of these functions vanishes.
Denote the Liouville vector field on~$M$ again by~$Y$, and
for $\tau \leq 0$ set $M_\tau = \phi_Y^\tau (M)$. 
Then the function 
$$
L_\tau (t,x) \,=\, 
\left\{
\begin{array} {ll}
e^\tau L(t, \phi_Y^{-\tau}(x))& \mbox{if }\; x \in M_\tau, \\ [0.1em]
e^\tau h_j(t)                 & \mbox{if }\; x \in \displaystyle \bigcup_{\tau < t \leq 0} \phi_Y^t(N_j)
\end{array}
\right.
$$    
is smooth and normalized, and it generates the loop
$$
\phi_{L_\tau}^t (x) \,=\, 
\left\{
\begin{array} {ll}
\phi_Y^{\tau} \circ \phi_{L}^t \circ \phi_Y^{-\tau} (x) & \mbox{if }\; x \in M_\tau, \\
x                         & \mbox{if }\; x \notin M_\tau.
\end{array}
\right.
$$
Assume now that $\phi_G = \phi_H$. Let $G^-(t,x) = -G(t,\phi_G^t(x))$ be the function generating $\phi_G^{-t}$. 
Then $L := G^- \# H$ generates the loop $\phi_G^{-t} \circ \phi_H^t$.
Now note that 
$$
H \,\sim\, G \# G^- \# H \,=\, G \# L \,\sim\, G \# L_\tau .
$$ 
Together with Lemma~\ref{le:sigmaindep} and Property~7 we obtain
$$
\sigma (H) \,=\, \sigma (G \# L_\tau) \,\leq\, \sigma (G) + E^+(L_\tau) \,=\, \sigma (G) + e^\tau E^+(L)
$$ 
for every $\tau \leq 0$.
Hence $\sigma (H) \leq \sigma (G)$. In the same way, $\sigma (G) \leq \sigma (H)$.
\proofend

\begin{cor} \label{c:2m}
The conclusion of Proposition~\ref{p:[H]} also holds for all compact
2-dimensional symplectic manifolds $(M,\omega)$ with $M$ not diffeomorphic to the 2-sphere.
\end{cor}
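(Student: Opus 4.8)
I would split the argument according to whether $M$ has boundary, since exactly one of two external facts about surfaces is used in each case.

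If $\partial M\neq\emptyset$, the plan is to observe that $(M,\omega)$ is a Liouville domain and then quote Proposition~\ref{p:[H]} directly. A compact connected symplectic surface with non-empty boundary has $H^2(M;\R)=0$, so $\omega$ is exact; moreover one can choose a primitive $\lambda$ whose restriction to each boundary circle is a positive multiple of the boundary orientation — for instance by prescribing $\lambda$ on $\omega$-adapted collars of the boundary components as $\bigl(\int_0^r h_i+C_i\bigr)\,d\theta$ with $C_i>0$ and $\sum_iC_i=\int_M\omega$, then extending arbitrarily to a $1$-form $\tilde\lambda$ on $M$ and correcting by $\mu$ where $\omega-d\tilde\lambda=d\mu$ with $\mu$ vanishing near $\partial M$ (such $\mu$ exists because $\omega-d\tilde\lambda$ vanishes near $\partial M$, integrates to $0$, and $H^2(M,\partial M;\R)\cong\R$ via integration). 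Then the vector field $Y$ with $\imath_Y\omega=\lambda$ is a globally defined Liouville vector field transverse to, and pointing out of, $\partial M$, so $(M,\omega)$ is a Liouville domain — in particular exact, hence symplectically aspherical — and Proposition~\ref{p:[H]} gives $\sigma(G)=\sigma(H)$ whenever $\phi_G=\phi_H$ for normalized $G,H\in\ch(M)$.

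If $M$ is closed, then $M\not\cong S^2$ forces $M$ to be a closed orientable surface of genus $g\geq 1$, so $\pi_2(M)=0$ and $(M,\omega)$ is symplectically aspherical. Here the key input is that $\Ham(M,\omega)$ is simply connected: for $g\geq 2$ because $\mathrm{Diff}_0(M)$ is contractible (Earle--Eells) and $\mathrm{Symp}_0(M,\omega)\simeq\mathrm{Diff}_0(M)$ by Moser's theorem, whence $\Ham(M,\omega)\simeq\mathrm{Symp}_0(M,\omega)$; and for $g=1$ because $\Ham(T^2,\omega)$, the kernel of the flux homomorphism on $\mathrm{Symp}_0(T^2)\simeq T^2$, is contractible. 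Granting this, let $G,H\in\ch(M)$ be normalized with $\phi_G=\phi_H=:\psi$. The paths $t\mapsto\phi^t_G$ and $t\mapsto\phi^t_H$ run from $\id$ to $\psi$ in $\Ham(M,\omega)$, so the concatenation of the first with the reverse of the second is a null-homotopic loop (as $\pi_1(\Ham(M,\omega))=0$), and I would extract from a null-homotopy a smooth homotopy rel endpoints between the two isotopies. For each value~$s$ of the homotopy parameter this homotopy gives a Hamiltonian isotopy from $\id$ to $\psi$, generated by a normalized Hamiltonian $K_s\in\ch(M)$ depending smoothly on~$s$, with $\phi^1_{K_s}=\psi$, $K_0=G$ and $K_1=H$; thus $G\sim H$ in the sense of Section~\ref{ss:path}, and Lemma~\ref{le:sigmaindep} yields $\sigma(G)=\sigma(H)$. (Lemma~\ref{le:sigmaindep} applies because the flow of $K_0$ has a contractible $1$-periodic orbit, by Remark~\ref{existence}.)

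The only non-formal ingredients are the two surface-topology facts — that a compact symplectic surface with boundary is a Liouville domain, and that $\Ham(\Sigma_g,\omega)$ is simply connected for $g\geq 1$ — and I expect the main points to get right to be the (minor) care in arranging the boundary-positive primitive in the first case and the correct invocation of the Earle--Eells/Moser package in the second; everything else is an immediate appeal to Proposition~\ref{p:[H]}, Lemma~\ref{le:sigmaindep} and Remark~\ref{existence}.
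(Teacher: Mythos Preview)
Your proof is correct and follows exactly the paper's strategy: split into the closed case (where $\pi_1(\Ham(M,\omega))=0$ for surfaces of genus $\geq 1$, so $\sigma$ descends from $\widetilde\Ham$ to $\Ham$ via Lemma~\ref{le:sigmaindep}) and the case with boundary (where $(M,\omega)$ is a Liouville domain, so Proposition~\ref{p:[H]} applies). The paper is terser, citing \cite[\S 7.2.B]{Pol01} for the vanishing of $\pi_1(\Ham)$ and \cite[Exercise 3.5.30]{McSa17} for the Liouville structure, while you spell out these surface-topology facts in more detail.
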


\proof
If $(M,\omega)$ is closed and different from the sphere, then the fundamental group of $\Ham (M,\omega)$
is trivial~\cite[\S 7.2.B]{Pol01}, and the claim follows.
If $M$ is not closed, 
then $(M,\omega)$ is a Liouville domain, see \cite[Exercise 3.5.30]{McSa17},
and so the claim follows from Lemma~\ref{p:[H]}. 
\proofend


\section{Three applications of the existence of a minimal action selector}  \label{s:3app}

In this section we illustrate by three examples 
how the existence of a minimal action selector provides short and elementary proofs of theorems in 
symplectic geometry and Hamiltonian dynamics.
Our examples are Gromov's non-squeezing theorem, 
the existence of periodic orbits near displaceable energy surfaces, 
and the unboundedness of Hofer's metric. 

\subsection{Gromov's non-squeezing theorem}

In the standard symplectic vector space $(\R^{2n}, \omega_0)$ with $n \geq 2$ and $\omega_0 = \sum_{j=1}^n dx_j \wedge dy_j$
we consider the open ball $B^{2n}(r)$ of radius $r$ and 
the cylinder $Z^{2n}(R) = B^2(R) \times \R^{2n-2} = \{ x_1^2 + y_1^2 < R^2\}$.
For any $r>0$ the ball $B^{2n}(r)$ embeds into $Z^{2n}(R)$ by a volume preserving embedding;
just take a suitable diagonal linear map of determinant one.
Every symplectic embedding~$\varphi \colon B^{2n}(r) \to Z^{2n}(R)$ is volume preserving, since 
$$
\varphi^* (\omega_0^n) \,=\,(\varphi^* \omega_0)^n \,=\, \omega_0^n ,
$$
but there is no such embedding if $r>R$.
This celebrated theorem of Gromov~\cite{Gro85} shows that symplectic mappings are much more rigid than volume preserving mappings.

\begin{thm} \label{t:Gromov}
If $r>R$ there exists no symplectic embedding of the ball $B^{2n}(r)$ into the cylinder $Z^{2n} (R)$.
\end{thm}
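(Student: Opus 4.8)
The plan is to derive Theorem~\ref{t:Gromov} from the existence of the minimal action selector~$A$ on $(\R^{2n},\omega_0)$ (see Remark~\ref{rem:exhaust} and Proposition~\ref{p:formal}) by the classical route through a symplectic capacity. For an open subset $U\subseteq\R^{2n}$ I would set
\[
c(U)\,:=\,\sup\bigl\{-A(H)\ \big|\ H\in\ch(\R^{2n}),\ H\le 0,\ \supp H\subset\T\times U \text{ compact}\bigr\}.
\]
Monotonicity under inclusions, $c(U)\le c(V)$ for $U\subseteq V$, is immediate from the definition. Moreover $c$ is invariant under symplectic embeddings: a symplectic embedding $\varphi\colon U\hookrightarrow\R^{2n}$ maps each Hamiltonian~$H$ admissible for~$U$ to a Hamiltonian $\varphi_*H$ admissible for~$\varphi(U)$ with $A(\varphi_*H)=A(H)$, because $\varphi$, restricted to a neighbourhood of the compact set $\supp H$, extends to a compactly supported Hamiltonian diffeomorphism~$\Phi$ of~$\R^{2n}$ isotopic to the identity (here one uses that the space of symplectic embeddings of a ball into~$\R^{2n}$ is connected, together with the isotopy extension theorem), and then $\varphi_*H=\Phi_*H$ and Property~3 of Proposition~\ref{p:formal} give $A(\varphi_*H)=A(\Phi_*H)=A(H)$. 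Consequently, if $\varphi\colon B^{2n}(r)\hookrightarrow Z^{2n}(R)$ is a symplectic embedding then
\[
c\bigl(B^{2n}(r)\bigr)\,=\,c\bigl(\varphi(B^{2n}(r))\bigr)\,\le\,c\bigl(Z^{2n}(R)\bigr),
\]
and the theorem reduces to the two estimates $c(B^{2n}(r))\ge\pi r^2$ and $c(Z^{2n}(R))\le\pi R^2$.

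For the lower bound I would test $c(B^{2n}(r))$ against the radial Hamiltonians $H_f(z)=f(\pi|z|^2)$ of the introduction. Given $\varepsilon>0$ small, choose $f\colon[0,\infty)\to(-\infty,0]$ with $f(0)=-\pi r^2+\varepsilon$, with $f\equiv 0$ on $[s_0,\infty)$ for some $s_0<\pi r^2$, and with $0\le f'<1$ and $f'>0$ exactly on the interval $(0,s_0)$; such an~$f$ exists once $\varepsilon$ is small, since one may then take $s_0\in(\pi r^2-\varepsilon,\pi r^2)$. Then $H_f\le 0$ is compactly supported in $B^{2n}(r)$, and by~\eqref{e:Af} all $1$-periodic orbits of~$X_{H_f}$ are constant, so $\spec(H_f)=\{-\pi r^2+\varepsilon,\,0\}$. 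By spectrality $A(H_f)\in\{-\pi r^2+\varepsilon,\,0\}$, and the non-degeneracy property~8 of Proposition~\ref{p:formal} rules out the value~$0$; hence $A(H_f)=-\pi r^2+\varepsilon$ and $c(B^{2n}(r))\ge\pi r^2-\varepsilon$. Letting $\varepsilon\to 0$ gives $c(B^{2n}(r))\ge\pi r^2$.

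For the upper bound, let $H\le 0$ have compact support in $Z^{2n}(R)=B^2(R)\times\R^{2n-2}$; then $\supp H$ is contained in a bounded set whose first two coordinates $(x_1,y_1)$ lie in $B^2(R)$. I would displace $\supp H$ by a shear in the first symplectic plane: take $G=\beta\cdot g(y_1)$, where $g$ is constant outside a compact set, satisfies $g'(y_1)>2\sqrt{R^2-y_1^2}$ for $|y_1|<R$, and $\beta$ is a cut-off function equal to~$1$ on a large ball containing $\supp H$ and all its relevant translates. Since $\int_{-R}^{R}2\sqrt{R^2-y^2}\,dy=\pi R^2$, one may arrange $\|G\|<\pi R^2+\varepsilon$, and $\phi_G^1$ displaces the open set $\{H\neq 0\}$, since each of its points is pushed in the $x_1$-direction out of $B^2(R)$ in the $(x_1,y_1)$-plane. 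The energy--capacity inequality (property~6 of Proposition~\ref{p:formal}) then gives $|A(H)|\le\|G\|<\pi R^2+\varepsilon$, hence $-A(H)\le\pi R^2$ and $c(Z^{2n}(R))\le\pi R^2$. Together with the previous paragraph and the monotonicity above, this yields $\pi r^2\le\pi R^2$, i.e.\ $r\le R$, contradicting the hypothesis $r>R$.

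The main obstacle is the symplectic invariance of~$c$, i.e.\ the equality $A(\varphi_*H)=A(H)$ for a symplectic embedding~$\varphi$; this hinges on extending a ball embedding to an ambient compactly supported Hamiltonian diffeomorphism, which is a standard fact independent of the non-squeezing theorem. The two other ingredients --- the determination of $\spec(H_f)$ for radial functions and the Hofer-norm bound for the shear --- are routine verifications.
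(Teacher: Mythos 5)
Your proof is correct and follows essentially the same route as the paper: a minimal action selector yields a symplectic capacity, the ball's capacity is bounded below by radial Hamiltonians via spectrality and non-degeneracy (Property~8), the cylinder's is bounded above by the energy--capacity inequality with a displacing shear, and the two are compared via the extension-after-restriction principle together with the coordinate-change property. The only cosmetic differences are that the paper interposes an ellipsoid $E(R_1,\dots,R_n)$ between $\varphi\bigl(B^{2n}(r-\tfrac{\varepsilon}{2})\bigr)$ and $Z^{2n}(R)$ and displaces that bounded set, whereas you displace the compact support of each test Hamiltonian directly inside the cylinder, and that the paper works on a fixed large ball $\overline{B}^{2n}(\rho)$ rather than with the exhaustion-based selector on $\R^{2n}$.
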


\proof
Let $\varphi \colon B^{2n}(r) \to Z^{2n}(R)$ be a symplectic embedding.
Fix $\gve \in (0,r)$, and choose
a symplectic ellipsoid 
$$
E(R_1, \dots, R_n) = \Biggl\{ (x_1, y_1, \dots, x_n, y_n) \,\Bigg|\, \sum_{j=1}^n \frac{x_j^2 + y_j^2}{R_j^2} <1 \Biggr\}
$$
such that
\begin{equation} \label{e:BEZ}
\varphi \left( B^{2n}(r- \tfrac \gve 2 )\right) \,\subset\, E(R_1, \dots, R_n) \,\subset\, Z^{2n}(R) .
\end{equation}
By the elementary Extension after restriction principle from~\cite{EkHo89}, see also \cite[Appendix~A]{Sch05},
there exists a compactly supported Hamiltonian function~$G$ on $\T \times \R^{2n}$ such that
$\phi_G = \varphi$ on $B^{2n}(r-\gve)$.
Choose $\rho$ so large that the support of~$G$ is contained in $B^{2n}(\rho)$.
Let $\sigma$ be a minimal action selector on the convex symplectic manifold $(M,\omega) = (\overline B^{2n}(\rho), \omega_0)$.
For every open subset $U$ of the interior of~$M$ we set
$$
c_\sigma (U) \,=\, \sup \left\{ | \sigma (H)| \mid H \text{ has compact support in $\T \times U$} \right\} .
$$
We will prove that
\begin{eqnarray*}
\pi(r-\gve)^2
\,\stackrel{\raisebox{.5pt}{\tiny \textcircled{\raisebox{-.9pt} {1}}}}{\leq}\,
c_\sigma (B^{2n}(r-\gve)) 
& \stackrel{\raisebox{.5pt}{\tiny \textcircled{\raisebox{-.9pt} {2}}}}{=} &
c_{\sigma} (\phi_G (B^{2n}(r-\gve))) \\
& \stackrel{\raisebox{.5pt}{\tiny \textcircled{\raisebox{-.9pt} {3}}}}{\leq} &
c_\sigma (E(R_1, \dots, R_n)) 
\,\stackrel{\raisebox{.5pt}{\tiny \textcircled{\raisebox{-.9pt} {4}}}}{\leq}\, 
\pi (R_1 +\gve)^2
\,\stackrel{\raisebox{.5pt}{\tiny \textcircled{\raisebox{-.9pt} {5}}}}{\leq}\,
\pi (R +\gve)^2 .   
\end{eqnarray*}
Theorem~\ref{t:Gromov} then follows since $\gve>0$ can be chosen arbitrarily small.

\s
Choose a smooth function $f \colon [0,+\infty) \to \R$ with support in $[0,\pi(r-\gve)^2)$ such that 
\[
f(0) = \min f<0, \qquad f'(s)\in [0,1) \quad \forall \2 s \in  [0,+\infty), \qquad f(s) = 0 \quad \mbox{if } f'(s)=0.
\]
While all orbits of the Hamiltonian flow of the function
$H_f \colon \R^{2n} \to \R$, $H_f(z) = f (\pi |z|^2)$, are closed, only those on the sphere of radius~$\sqrt{s/\pi}$ 
with $f'(s) \in \Z$ have period one.
Hence the spectrum of~$H_f$ contains only~$0$ and~$\min H_f$.
By the non-degeneracy property~8 in Proposition~\ref{p:formal}, $\sigma (H_f) <0$ and so 
$\sigma (H_f) = \min H_f$ by the spectrality axiom.
Since we can choose $f$ such that $\min H_f$ is as close to $-\pi (r-\gve)^2$ as we like, 
inequality~\textcircled{\raisebox{-.9pt} {1}} follows.

Equality~\textcircled{\raisebox{-.9pt} {2}} follows from the coordinate change property~3 in 
Proposition~\ref{p:formal}.

Inequality~\textcircled{\raisebox{-.9pt} {3}} follows from the first inclusion in~\eqref{e:BEZ}:
We can use more Hamiltonian functions in $E(R_1, \dots, R_n)$ than in 
$\phi_G (B^{2n}(r-\gve)) \subset \varphi (B^{2n}(r- \tfrac \gve 2 )) \subset E(R_1, \dots, R_n)$.

It is easy to construct a compactly supported Hamiltonian function~$K_1$ on~$\R^2$
such that $\| K_1 \| \leq \pi(R_1 + \gve)^2$ and such that $\phi_{K_1}$ displaces~$B^2(R_1)$,
see \cite[p.\ 171]{hz94}. Let $K$ be a compactly supported cut-off of the function 
$(x_1, y_1, \dots, x_n,y_n) \mapsto K_1(x_1,y_1)$ such that $\phi_K$ displaces $E(R_1, \dots, R_n)$ and $\|K\|=\|K_1\|$.
Choosing $\rho$ larger if necessary, we can assume that the support of~$K$ is contained in~$B^{2n}(\rho)$.
The energy-capacity inequality~6 in Proposition~\ref{p:formal} now implies that 
$c_{\sigma}(E(R_1, \dots, R_n)) \leq \pi (R_1+ \gve)^2$.

Finally, the second inclusion in~\eqref{e:BEZ} shows that $\pi (R_1+ \gve)^2 \leq \pi (R+\gve)^2$.
\proofend

\subsection{Existence of periodic orbits near a given energy surface}

The search for periodic orbits of prescribed energy is a traditional topic of 
celestial mechanics and therefore also of Hamiltonian dynamics.
We consider an autonomous Hamiltonian $H \colon M \to \R$ on a symplectic manifold $(M,\omega)$,
and assume that $c$ is a regular value of~$H$ with compact energy surface $S_c = H^{-1}(c)$.
By preservation of energy, $S_c$ is invariant under the Hamiltonian flow of~$H$.
Examples by Ginzburg~\cite{Gi97} and Herman~\cite{He99} show that $S_c$ may carry no periodic orbit.
We therefore look for periodic orbits on nearby energy surfaces $S_{c'} = H^{-1}(c')$.

\begin{thm} \label{t:dense}
Let $(M,\omega)$ be a compact symplectically aspherical symplectic manifold, which is either closed or convex, 
and assume that the compact and regular energy surface $S_c = H^{-1}(c)$ is disjoint from $\partial M$ and 
displaceable, namely
there exists a smooth function $K \colon \T \times M \to \R$ 
with support in $\T \times (M \setminus \partial M)$ such that $\phi_K(S_c) \cap S_c = \emptyset$. 
Then there exists a sequence $c_j \to c$ of regular values of~$H$ 
such that every energy surface $S_{c_j}$ carries a periodic orbit of the flow of~$H$. 
\end{thm}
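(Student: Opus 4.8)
Here is my plan for proving Theorem~\ref{t:dense}.

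\textbf{Setup and strategy.} The plan is to run the classical ``almost existence'' argument of Hofer--Zehnder, now powered by the minimal action selector~$\sigma$ constructed in the previous sections rather than by the Hofer--Zehnder capacity. First I would normalize the situation: since $S_c$ is compact and regular and disjoint from~$\partial M$, I can choose a small interval $I = (c-\delta, c+\delta)$ of regular values of~$H$ whose union of level sets $\bigcup_{e \in I} S_e$ is a compact ``thickening'' $\Sigma \cong (c-\delta,c+\delta) \times S_c$ contained in $M \setminus \partial M$ and (after shrinking~$\delta$) displaced by~$\phi_K$, i.e.\ $\phi_K(\Sigma) \cap \Sigma = \emptyset$. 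Using the flow of~$H$ one parametrizes a neighborhood of~$S_c$ so that $H$ becomes, up to a diffeomorphism, the projection onto the $I$-factor. The goal is to show that the set of $e \in I$ for which $S_e$ carries a periodic orbit is dense in~$I$.

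\textbf{The core argument by contradiction.} Suppose instead that there is a nonempty open subinterval $J = (a,b) \subset I$ such that \emph{no} $S_e$ with $e \in J$ carries a periodic orbit of~$X_H$. I would then consider Hamiltonians of the form $F = f \circ H$, where $f \colon \R \to \R$ is smooth, supported in a small neighborhood of $[a,b]$, non-positive (or suitably signed), with $f$ very negative and very steep on $[a,b]$ but $|f'|$ as large as we wish on the region where $S_e$ has no closed orbit; by the chain rule $X_F = (f' \circ H)\, X_H$, so every nonconstant $1$-periodic orbit of~$X_F$ lies on a level set $S_e$ where $X_H$ has a periodic orbit whose period is $1/f'(e)$ --- and by our assumption such orbits can only occur on level sets outside~$J$. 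Choosing $f$ so that $f' \equiv 0$ there forces $F$ to have trivial spectrum apart from its constant values: concretely one arranges, exactly as in the radial-function discussion in the introduction and in Remark~\ref{twocrit}, that $\spec(F) \subset \{0, \min F\}$ and that one can make $|\min F|$ arbitrarily large. By the non-degeneracy property~8 of Proposition~\ref{p:formal} together with spectrality (axiom~A1), this yields $\sigma(F) = \min F$, hence $|\sigma(F)|$ can be made arbitrarily large while $F$ remains supported in~$\T \times \Sigma$.

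\textbf{Deriving the contradiction from the energy--capacity inequality.} On the other hand, $F$ is supported in $\T \times U$ where $U$ is a small open neighborhood of~$\Sigma$ that is still displaced by~$\phi_K$ (shrinking things once more if necessary). The energy--capacity inequality, property~6 of Proposition~\ref{p:formal}, then gives $|\sigma(F)| \leq \|K\|$, a fixed finite number independent of~$f$. This contradicts the unboundedness of $|\sigma(F)|$ just obtained. Therefore the assumption was false: periodic orbits occur on a dense set of levels $e \in I$, and by Sard's theorem almost every such~$e$ is a regular value; extracting a sequence $c_j \to c$ of regular values carrying periodic orbits completes the proof.

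\textbf{Expected main obstacle.} The routine parts (Sard, the thickening $\Sigma$, the chain-rule identity for $X_{f \circ H}$) are standard. The step requiring the most care is the construction of the profile function~$f$ so that simultaneously (a) $F = f \circ H$ has no nonconstant $1$-periodic orbit --- which uses both the ``no closed orbit on $S_e$ for $e \in J$'' hypothesis and a bound $|f'| < $ (reciprocal of the minimal period) on the complementary levels, the latter requiring a lower bound on periods that one gets from compactness of~$\Sigma$ and a limiting argument --- and (b) $\min F$ is as negative as desired, so that property~8 bites. This is the symplectic analogue of the argument on p.\ 171 and in \S 4.2 of~\cite{hz94}, and the only genuine input beyond bookkeeping is the a priori lower bound on periods of closed orbits on a compact family of regular energy surfaces, which follows from the fact that $X_H$ is non-vanishing and bounded there.
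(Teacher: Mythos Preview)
Your argument is correct and uses the same two ingredients as the paper (non-degeneracy, property~8, and the energy--capacity inequality, property~6), but combines them differently. The paper proceeds directly rather than by contradiction: for each $\varepsilon>0$ it builds a single profile $f_\varepsilon$ supported in $I_\varepsilon=(c-\varepsilon,c+\varepsilon)$ with exactly the two critical values $0$ and $-\|K\|-1$, observes that properties~8 and~6 force $-\|K\|-1 < \sigma(f_\varepsilon \circ H) < 0$, and concludes from spectrality that $f_\varepsilon \circ H$ must have a \emph{non-constant} $1$-periodic orbit in $H^{-1}(I_\varepsilon)$, since the constant orbits contribute only the actions $0$ and $-\|K\|-1$ to the spectrum. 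That orbit reparametrizes to a periodic orbit of~$X_H$ on some level in~$I_\varepsilon$, and letting $\varepsilon\to 0$ finishes. Your route instead assumes a gap interval~$J$ without closed orbits, uses this hypothesis to pin down $\spec(F)=\{0,\min F\}$ exactly, and lets $|\min F|\to\infty$ to contradict the energy--capacity bound. Both work; the paper's version is a shade slicker because it never needs to control the full spectrum of the auxiliary Hamiltonian or invoke a contradiction, while yours is closer in spirit to the classical Hofer--Zehnder capacity argument of \cite[\S 4.2]{hz94}.

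Two minor cleanups. First, your ``main obstacle'' about a lower bound on periods is unnecessary: simply take $f$ with support contained in~$\overline{J}$ itself, so that $f'\neq 0$ only on levels carrying no closed orbit of~$X_H$, and no period estimate is needed at all. Second, Sard's theorem plays no role here, since by construction every value in~$I$ is already regular.
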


By applying this result to sufficiently large balls or disc bundles we obtain the existence 
of nearby periodic orbits for compact regular hypersurfaces in~$\R^{2n}$ and 
(under the displaceability assumption) in cotangent bundles.

\b \ni
{\it Proof of Thereom~\ref {t:dense}.}
Since $c$ is a regular value and $S_c$ is disjoint from $\partial M$, 
we find an open interval $I = (c-\varepsilon, c+\varepsilon)$ of regular values of~$H$ 
such that the union $U = \coprod_{c' \in I}S_{c'}$ of diffeomorphic hypersurfaces forms an open neighbourhood
of~$S_c$ in~$M\setminus \partial M$.
Choose a smooth function $K \colon \T \times M \to \R$ with support in $\T \times (M \setminus \partial M)$ 
such that $\phi_K(S_c) \cap S_c = \emptyset$.
Then $\phi_K$ displaces a whole neighbourhood of~$S_c$.
We can therefore choose $\varepsilon$ smaller if necessary, such that
$\phi_K (U) \cap U = \emptyset$. 
Let $f_\gve \colon \R \to \R$ be a smooth non-positive function with support in~$I$
whose only critical values are~$0$ and $-\|K\|-1$, see Figure~\ref{fig.fe}.

\begin{figure}[h]   
 \begin{center}
  \psfrag{R}{$\R$}  \psfrag{ce-}{$c-\varepsilon$}  \psfrag{ce+}{$c+\varepsilon$}  
  \psfrag{K}{$-\|K\|-1$} \psfrag{fe}{$f_{\varepsilon}$} 
  \leavevmode\includegraphics{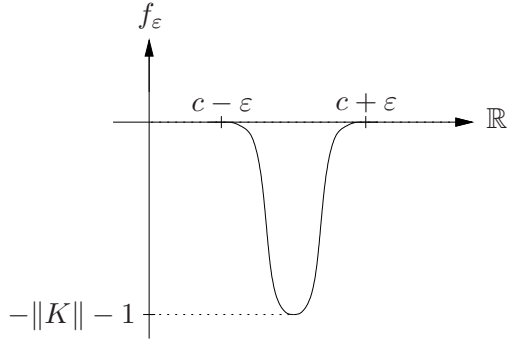}
 \end{center}
  \caption{The function $f_{\varepsilon}$}   \label{fig.fe}
\end{figure}

\noindent
The function $H_\gve := f_\gve \circ H$ has support in~$U$.
Let $\sigma$ be a minimal action selector for~$M$ (it exists by our construction 
in Sections~\ref{s:minimal} and~\ref{s:convex} and by Remark~\ref{rem:exhaust}).
By the non-degeneracy property~8 in Proposition~\ref{p:formal},
$\sigma (H_\gve) < 0$.
Further, the energy-capacity inequality~6 in Proposition~\ref{p:formal} 
shows that $|\sigma (H_\gve)| \leq \|K\|$.
Hence 
$$
-\|K\|-1 < \sigma (H_\gve) < 0 .
$$
Since the only critical values of $H_\gve$ are $0$ and $-\|K\|-1$, and since $\sigma (H_\gve)$ belongs 
to the spectrum of~$H_\gve$, it follows that $H_\gve$ has a non-constant $1$-periodic orbit in~$U$. 
A constant reparametrization of this orbit is a periodic orbit~$\gamma$ of~$H$, and $H(\gamma) \in I$.
Since $\gve >0$ was arbitrary, the theorem follows.
\proofend

Theorem~\ref{t:dense} can be improved in two directions:
First, an elementary additional argument shows that the set of energies $c' \in (c-\gve,c+\gve)$ at which 
the flow of~$H$ has a periodic orbit actually forms a set of full Lebesgue measure.
Secondly, assume in addition that near~$S_c$ one can find a Liouville 
vector field~$Y$ transverse to~$S_c$. In this case, $S_c$ is called of contact type.
Using the local flow $\phi_Y^t$ of~$Y$ we define another foliation $\coprod_{c' \in I}\widetilde S_{c'} =: \widetilde U$ with central leaf  
$\widetilde S_c = S_c$ by 
$$
\widetilde S_{c'} \,:=\, \phi_Y^{c'-c}S_c .
$$
We now look at the ``tautological'' function $\widetilde H \colon \widetilde U \to \R$ given by 
$\widetilde H(x) = c'$ if $x \in \widetilde S_{c'}$.
The restrictions of the Hamiltonian flow of $\widetilde H$ to~$S_{c}$ and $\widetilde S_{c'}$ are conjugate under $\phi_Y^{c'-c}$
up to a constant time-change. By Theorem~\ref{t:dense} we find $c'$ such that the flow of~$\widetilde H$
has a periodic orbit on $\widetilde S_{c'}$.
Hence the flow of~$\widetilde H$ also has a periodic orbit on~$S_{c}$.
A reparametrisation of this orbit is a periodic orbit of the flow of our original function~$H$.
This result proves a special case of the Weinstein conjecture on the existence of a periodic Reeb orbit on 
any closed contact manifold.
We refer to Sections~4.2 and 4.3 of~\cite{hz94} for detailed proofs of these improvements.

\subsection{Unboundedness of Hofer's metric}  \label{ss:Hofer}

By Darboux's theorem, every symplectic manifold $(M,\omega)$ locally looks like the standard symplectic vector space
of the same dimension, and so there are no local geometric invariants of $(M,\omega)$.
However, on the group $\Ham (M,\omega)$ of Hamiltonian diffeomorphisms there is a bi-invariant Finsler metric,
the so-called Hofer metric,
which is defined by
$$
d (\phi, \id) \,=\, \inf_{H} \|H\|,
$$
where $H$ varies over those $H \in \ch (M)$ with $\phi_H^1 = \varphi$ and where $\|H\|$
is the Hofer norm defined by~\eqref{e:Hofer}. 
The only difficult point in verifying that $d$ is indeed a metric is its non-degeneracy.
For closed or convex symplectically aspherical manifolds, this can be done by using any minimal action selector.
We leave this nice exercise to the reader. 

Note that in the above infimum we can ask the Hamiltonian~$H$ to be normalized 
as in Section~\ref{ss:path}, which for $M$ closed means that 
\[
\int_M H(t,\cdot)\, \omega^n \,=\, 0 \qquad \forall \2 t \in \T.
\]
Indeed, any Hamiltonian can be normalized by adding a suitable function of~$t$, and this operation neither affects the Hamiltonian vector field nor the Hofer norm.
Symplectic geometers use their metric intuition to prove results on the metric space $(\Ham (M,\omega), d)$,
which in turn help understanding the dynamics and the symplectic topology of the underlying manifold~$(M,\omega)$,
see for instance~\cite{Pol01}.  
A first question one can ask on a metric space is whether it has bounded diameter.
The following result was proven by Ostrover~\cite{Ost03}
under the assumption that also the first Chern class vanishes on~$\pi_2(M)$,
and by McDuff~\cite{McDuff10} without this assumption.
They both used the PSS selector.

\begin{thm}
Let $(M,\omega)$ be a closed symplectically aspherical manifold. 
Then the Hofer metric on $\Ham (M,\omega)$ is unbounded.
\end{thm}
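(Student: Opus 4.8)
The plan is to derive the theorem from a minimal action selector $\sigma$ on $(M,\omega)$ --- one exists by the construction of Section~\ref{s:minimal} --- by means of the lower bound
\[
d(\phi,\id)\;\geq\;\dist\bigl(0,\spec(\phi)\bigr),\qquad\phi\in\Ham(M,\omega),
\]
where $\spec(\phi):=\spec(H)$ for any normalized Hamiltonian $H$ with $\phi_H^1=\phi$; this is well defined since $\spec(G)=\spec(H)$ whenever $G,H$ are normalized with $\phi_G=\phi_H$, by the monodromy result of Lemma~\ref{le:specind}. Granting this estimate, the theorem follows once we exhibit Hamiltonian diffeomorphisms $\phi_n$ whose action spectra escape every neighbourhood of~$0$, i.e.\ $\dist(0,\spec(\phi_n))\to\infty$.

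To prove the estimate, fix $\phi$ and a normalized Hamiltonian $H$ with $\phi_H^1=\phi$. By the spectrality axiom~A1 of Definition~\ref{def:axiom} one has $\sigma(H)\in\spec(H)$, which by Lemma~\ref{le:specind} equals $\spec(\phi)$, so $|\sigma(H)|\geq\dist(0,\spec(\phi))$. On the other hand, since $H$ is normalized on the closed manifold $M$ we have $\min_xH(t,x)\leq 0\leq\max_xH(t,x)$ for every $t\in\T$, so, writing $\|H\|=E^+(H)-E^-(H)$ for the Hofer norm, $-\|H\|\leq E^-(H)\leq 0\leq E^+(H)\leq\|H\|$; combined with the Lipschitz continuity property of Proposition~\ref{p:formal}, which in particular gives $E^-(H)\leq\sigma(H)\leq E^+(H)$, this yields $|\sigma(H)|\leq\|H\|$. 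Hence $\|H\|\geq\dist(0,\spec(\phi))$, and taking the infimum over all normalized Hamiltonians generating $\phi$ --- which, as noted in Section~\ref{ss:Hofer}, computes $d(\phi,\id)$ --- gives the asserted inequality. It is precisely the fact that the action spectrum is a genuine function of $\phi\in\Ham(M,\omega)$, i.e.\ Lemma~\ref{le:specind}, that is being used to pass from normalized Hamiltonians back to diffeomorphisms.

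The remaining, and decisive, step is the construction of the $\phi_n$. The torus $(\T^{2n},\omega_0)$ is the guiding model: for $H=\lambda\sin(2\pi x_1)$ the Hamiltonian flow is a shear in the $y_1$--direction, every non-constant $1$--periodic orbit winds non-trivially around the $y_1$--circle and is therefore non-contractible, so $\cp(H)$ consists of the constant orbits along $\{x_1=\tfrac14\}$ and $\{x_1=\tfrac34\}$, whence $\spec(\phi_H^1)=\{\lambda,-\lambda\}$ and $\dist(0,\spec(\phi_H^1))=|\lambda|$ is arbitrarily large. For a general closed symplectically aspherical~$M$ one runs the considerably more delicate argument of Ostrover~\cite{Ost03} and McDuff~\cite{McDuff10}: one builds Hamiltonians that are large in the $C^0$--norm but whose flow is tame enough that every contractible $1$--periodic orbit is a constant orbit lying on a single regular level on which the Hamiltonian has absolute value growing with~$n$. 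The main obstacle --- and the technical core of the whole theorem --- is to control the \emph{non-constant} contractible $1$--periodic orbits, i.e.\ to rule out that any of them carries an action value close to~$0$; this is what dictates the choice of the underlying geometric configuration. Once such $\phi_n$ are available, the lower bound above gives $d(\phi_n,\id)\to\infty$, so the Hofer metric on $\Ham(M,\omega)$ is unbounded.
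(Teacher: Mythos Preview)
The reduction you give --- the inequality $d(\phi,\id) \geq \dist(0,\spec(\phi))$ via spectrality and the bound $|\sigma(H)| \leq \|H\|$ for normalized $H$ --- is correct and is essentially the same mechanism the paper uses (the paper tracks $\sigma(G_s)$ along a one-parameter family using continuity of $\sigma$ together with $\spec(G_s)=\spec(H)+s$, but your version works just as well). The genuine gap is that you do not carry out the construction of the diffeomorphisms~$\phi_n$. You cite Ostrover and McDuff and describe the goal as producing Hamiltonians ``whose flow is tame enough that every contractible $1$-periodic orbit is a constant orbit lying on a single regular level on which the Hamiltonian has absolute value growing with~$n$,'' but this is \emph{not} what Ostrover's construction does, and on a general closed aspherical~$M$ it is not clear that such Hamiltonians exist at all. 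Your torus example relies on the non-contractibility of the non-constant orbits, a feature that has no analogue on, say, a simply connected~$M$.

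The paper (following Ostrover) proceeds concretely and quite differently from your description: take a small ball $B\subset M$ displaced by $h=\phi_H^1$ for a normalized autonomous~$H$, and a normalized function~$f$ equal to~$1$ on $M\setminus B$. Set $\phi_s = h \circ \phi_f^s$. The point is that, since $h(B)\cap B=\emptyset$, any contractible $1$-periodic orbit of the concatenated flow must start outside~$B$, where $f\equiv 1$; hence these orbits are exactly the (in general \emph{non-constant}) contractible $1$-periodic orbits of~$H$, and each picks up the additive term~$s$ in its action from the $sf$-part of the flow. This yields $\spec(\phi_s)=\spec(H)+s$, whence $\dist(0,\spec(\phi_s))\to\infty$. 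Without this explicit construction (or an equivalent one), the proof is incomplete; the passage ``one runs the considerably more delicate argument of Ostrover and McDuff'' is doing all the work and, as written, mischaracterises that argument.
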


\proof
Let $B \subset M$ be a symplectically embedded ball in~$M$, so small that there exists a Hamiltonian
diffeomorphism $h$ of~$M$ with $h (B) \cap B = \emptyset$. 
We can assume that $h$ is the time-1 map of an autonomous and normalized Hamiltonian~$H$.
Let $f \colon M \to \R$ be a function such that $f = 1$ on $M \setminus B$ and $\int_Mf \,\omega^n = 0$.

\begin{figure}[h]   
 \begin{center}
  \psfrag{f}{$f$}  \psfrag{h}{$h$} \psfrag{1}{$1$} \psfrag{B}{$B$}
  \leavevmode\includegraphics{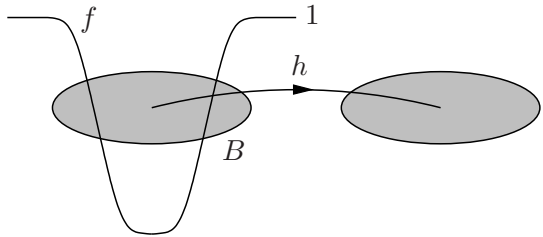}
 \end{center}
 \caption{The function $f$ and the map $h$}  
 \label{fig.1fh}
\end{figure}

For $s \in \R$ consider the Hamiltonian diffeomorphism
$$
\phi_s \,=\, h \circ \phi_{sf} \,=\, h \circ \phi_f^s .
$$
Let $G_s$ be any normalized Hamiltonian generating $\phi_s$.
We shall prove that
\begin{equation} \label{e:specGH}
\spec (G_s) \,=\, \spec (H) + s.
\end{equation}
Now let $\sigma$ be any (not necessarily minimal) action selector on~$\ch (M)$.
Since $\spec H$ has empty interior and $\sigma$ is continuous, 
\eqref{e:specGH} implies that $\sigma (G_s) = s_0+s$ for some $s_0 \in \R$ and every $s \in \R$.
Further, since $G_s$ is normalized, 
$E^-(G_s) \leq 0 \leq E^+(G_s)$ for every $s$.
Property~5 in Proposition~\ref{p:formal} thus implies that
$$
\| G_s \| \,=\, E^+(G_s) - E^-(G_s) \,\geq\, E^+(G_s) \,\geq\,\sigma (G_s) = s_0+s .
$$
This holds for all normalized Hamiltonians generating $\phi_s$, and so $d(\phi_s,\id) \geq s_0+s \to +\infty$ as $s \to +\infty$.

In order to prove \eqref{e:specGH} we use that by Lemma~\ref{le:specind} 
the set $\spec (G_s)$ does not depend on the specific choice of the normalized Hamiltonian~$G_s$ generating~$\phi_s$.
It therefore suffices to prove~\eqref{e:specGH} for the ``natural'' Hamiltonian generating $h \circ \phi_{sf}$
given by
$$
\widehat G_s(t,x) \,=\, \left\{
\begin{array} {ll}
\alpha (t+\frac 12) \2 s \2 f(x)         &\, \mbox{if }\, t \in \bigl[ 0, \frac 12 \bigr], \\ [0.3em] 
\alpha (t) \2 H(x)     &\, \mbox{if }\,  t \in \bigl[\tfrac 12, 1 \bigr] ,
\end{array}\right.
$$
that first generates the map $\phi_{sf}$ in time $\frac 12$ and then generates the map~$h$ in time~$\frac 12$,
yielding $h \circ \phi_{sf}$ in time~$1$.
Here, $\alpha \colon \R \to \R$ is a smooth non-negative function with support in~$(\frac 12,1 )$ and $\int_\R \alpha(t) \, dt =1$.
At first reading one should take $\alpha \equiv 2$, but this would result in a Hamiltonian~$\widehat G_s$ not
smooth at $t = \frac 12$.

Since $h(B) \cap B = \emptyset$, the contractible 1-periodic orbits of $\phi_{\widehat G_s}^t$
are exactly the contractible 1-periodic orbits of $\phi_{\alpha H}^t$:
Such an orbit $\gamma$ must start outside the ball~$B$ and does not move for $t \in \bigl[0,\frac 12 \bigr]$,
hence $f \equiv 1$ along $\gamma$.
The autonomous Hamiltonian~$H$ is also constant along~$\gamma$.
After reparametrization, $\gamma$ corresponds to a 1-periodic $\gamma_H$ of $\phi_H^t$.

Given such an orbit $\gamma$ and a disc $\overline \gamma$ that restricts to $\gamma$ along its boundary,
we compute the actions
\begin{eqnarray*}
\A_{\widehat G_s}(\gamma) &=&
\int_{\overline \gamma} \omega + s \int_0^\frac{1}{2} \alpha (t+\tfrac 12)\, f(\gamma(t)) \,dt 
                                         + \int_\frac{1}{2}^1 \alpha (t)\, H(\gamma(t)) \,dt \\
&=&
\int_{\overline \gamma} \omega + s + \int_0^1 H(\gamma_H(t)) \,dt \\
&=& \A_H(\gamma_H) +s .
\end{eqnarray*}
Claim \eqref{e:specGH} follows.
\proofend


\section{Further directions and open problems} \label{s:further}

In this section we describe a few modifications of the construction in Section~\ref{s:minimal}.
For the proofs of the claims made we refer to~\cite{Ha18}.
To fix the ideas we assume that the symplectically aspherical manifold $(M,\omega)$ is closed.
 
\subsection{Smaller deformation spaces}
\label{s:smaller}

Our definition of an action selector admits several variations.

\subsubsection{Smaller classes of functions deforming $H$}
The set $\ck (H)$ is a large class of deformations of~$H$,
and it might be useful to consider smaller classes.

\begin{defn}
{\rm 
A subset $\ck' (M)= \bigcup_H \ck'(H)$ of $\ck(M)$ is {\it admissible}\/
if the following holds:
For any pair $H_0 \geq H_1$ and for any $K_1 \in \ck'(H_1)$ with 
$\supp \partial_s K_1\subset [s^-,s^+]$,
every $K_0 \in \ck (H_0)$ with $\partial_s K_0 \leq 0$ for $s \leq s^-$
and $K_0=K_1$ for $s \geq s^-$
belongs to~$\ck'(H_0)$.
}
\end{defn}

For every admissible set $\ck' (M) \subset \ck(M)$ and $\cd'(H) := \ck'(H) \times \cj_\omega(M)$,
$$
A'(H) \,=\, \sup_{(K,J) \in \cd'(H)} \min_{\cu (K,J)} a_H^-
$$  
defines a minimal action selector.

Examples of admissible sets are given by the monotone
decreasing deformations ($\partial_s K \leq 0$),
and for every real number~$c$ by the set $\ck_c (M) = \{ K \in \ck(M) \mid K^+ = c\}$.
Of course, $A' \leq A$ for every admissible subset $\ck'(M)$ of~$\ck(M)$.
For the classes $\ck_c(M)$ equality holds, see~\cite[Prop.\ 4.5.2]{Ha18}: 

\begin{prop} \label{p:down}
For every $c \in \R$ we have
$A_c(H) = A(H)$ for all $H \in C^\infty (\T \times M,\R)$.
\end{prop}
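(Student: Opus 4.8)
The plan is to show both inequalities $A_c(H) \le A(H)$ and $A_c(H) \ge A(H)$. The first is immediate, since $\ck_c(M)$ is an admissible subclass of $\ck(M)$: indeed $\cd_c(H) \subset \cd(H)$, so the supremum defining $A_c(H)$ is taken over a smaller set, giving $A_c(H) \le A(H)$. For the reverse inequality, I would argue that every pair $(K,J) \in \cd(H)$ can be modified, without decreasing $\min_{\cu(K,J)} a_H^-$, to a pair $(\widetilde K,J) \in \cd_c(H)$. Concretely, given $K \in \ck(H)$ with $\partial_s K$ supported in $[s^-,s^+]$, so that $K(s,\cdot,\cdot) = K^+$ for $s \ge s^+$, I would glue to $K$ on a half-line $[s^+,\infty)$ a further monotone \emph{decreasing} homotopy carrying $K^+$ to the constant $c' := \min_x(\text{something dominating } K^+)$ and then constant; more simply, since only the behaviour near $-\infty$ matters for $a_H^-$, concatenate $K$ with a monotone homotopy from $K^+$ down to some constant $\le \min_{t,x} K^+$, and then (using the shift property, or rather redoing the gluing) to the prescribed constant $c$. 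Call the result $\widetilde K$; then $\widetilde K^- = H$ and $\widetilde K^+ = c$, so $(\widetilde K,J) \in \cd_c(H)$.

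The key point is then that $\min_{\cu(\widetilde K,J)} a_H^- \ge \min_{\cu(K,J)} a_H^-$. This is exactly the mechanism used in the proof of Proposition \ref{mon} (Monotonicity): one takes a minimizer $u \in \cu(\widetilde K,J)$, and because the extra piece of homotopy appended for $s \ge s^+$ is monotone decreasing in $s$, the action-energy identity \eqref{act-en} shows that following $u$ out to $s \to +\infty$ only decreases $\A$, so $u$ restricted to a neighbourhood of $+\infty$ lands near $\cp(c) = \cp(\text{const})$ with action converging to the constant value; meanwhile on $(-\infty,s^-]$ nothing has changed. More to the point, I expect the cleanest route is the one parallel to Monotonicity: instead of modifying for $s$ large, observe that one may realize $(\widetilde K, J)$ by a limiting/gluing argument so that any $u \in \cu(\widetilde K,J)$ gives rise, by the diagonal-sequence compactness of Proposition \ref{comp}(i) applied to a family $\widetilde K^\lambda$ with the decreasing tail pushed to $+\infty$, to an element of $\cu(K,J)$ with no larger $a_H^-$ value. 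Either way, monotonicity of $\A_H(u(s))$ on $(-\infty,s^-]$ plus the energy identity does the work.

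The main obstacle I anticipate is bookkeeping the gluing so that $\widetilde K$ genuinely lies in $\ck_c(H)$ while controlling $a_H^-$: one must ensure the appended decreasing homotopy does not introduce new low-action cylinders emanating from below the existing $\min_{\cu(K,J)} a_H^-$. But a decreasing homotopy cannot do this — along it the action is non-increasing in $s$, so any finite-energy cylinder for $\widetilde K$ has its $\alpha$-limit already governed by the unchanged part near $-\infty$, i.e.\ by $H$, with action $\le a_H^-$ of the corresponding $\cu(K,J)$-cylinder obtained in the limit. Hence $A_c(H) \ge \min_{\cu(K,J)} a_H^-$ for every $(K,J) \in \cd(H)$, and taking the supremum over $\cd(H)$ gives $A_c(H) \ge A(H)$. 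Combined with the trivial inequality this yields $A_c(H) = A(H)$, as claimed; the argument is uniform in $c \in \R$.
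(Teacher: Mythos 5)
The easy inequality $A_c(H)\le A(H)$ is fine. For the reverse inequality you oscillate between two arguments, and the one you present as the ``key point'' has a genuine gap. You claim that for a \emph{fixed} $\widetilde K\in\ck_c(H)$, obtained by appending a monotone tail to $K$, one has $\min_{\cu(\widetilde K,J)}a_H^-\ge\min_{\cu(K,J)}a_H^-$, and you justify this by noting that the action is non-increasing along the appended decreasing tail and that the $\alpha$-limit of any $\widetilde K$-cylinder is a $1$-periodic orbit of~$H$. Neither observation proves the inequality: an element of $\cu(\widetilde K,J)$ is not an element of $\cu(K,J)$ (it solves a different equation for $s$ large), so there is no comparison map between the two solution spaces along which to transport $a_H^-$; and the $\alpha$-limit statement holds for \emph{every} deformation in $\ck(H)$, hence carries no information about which critical levels the two deformations can or cannot shake off. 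For a fixed $\widetilde K$ the claimed inequality has no evident reason to hold: changing the tail changes which half-infinite $K$-trajectories extend to finite-energy cylinders, which is exactly the effect the whole construction is designed to exploit. The monotonicity of the appended tail is a red herring (and a homotopy from $K^+$ to an arbitrary constant $c$ need not be decreasing anyway).

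The correct mechanism is the one you mention only in passing as the ``cleanest route'': let $\widetilde K^\lambda$ agree with $K$ for $s\le\lambda$ and interpolate from $K^+$ to the constant $c$ on $[\lambda,\lambda+1]$, fix $\varepsilon>0$, and argue by contradiction exactly as in the proof of Proposition~\ref{mon}. If for every $\lambda$ there were $u_\lambda\in\cu(\widetilde K^\lambda,J)$ with $a_H^-(u_\lambda)<\min_{\cu(K,J)}a_H^--\varepsilon$, then (using an energy bound uniform in $\lambda$, which holds because the tail is a translate of a fixed homotopy) a diagonal subsequence $u_{\lambda_n}$ with $\lambda_n\to+\infty$ converges in $C^\infty_{\loc}$ to some $u\in\cu(K,J)$; since all the $\widetilde K^{\lambda_n}$ equal $H$ on $(-\infty,s^-]$, one gets $a_H^-(u)\le\liminf_n a_H^-(u_{\lambda_n})\le\min_{\cu(K,J)}a_H^--\varepsilon$, a contradiction. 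The $\varepsilon$ is not optional: the limiting argument only yields the inequality up to $\varepsilon$ for $\lambda$ large, after which one takes the supremum over $(K,J)\in\cd(H)$ and lets $\varepsilon\to0$. This argument makes no use of the tail being monotone. With this replacement your proof closes; note that the paper itself defers the proof to~\cite{Ha18}, and the argument just described is the natural one, parallel to the paper's proof of monotonicity.
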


For functions $K \in \ck_0 (H)$,
the removal of singularity theorem
shows that the elements of~$\cu (K,J)$ are actually open disks
which are $J^+$-holomorphic near the origin and satisfy the Floer equation 
on a collar of the boundary equipped with cylindrical coordinates. 
These are exactly the objects which are used in the PSS~isomorphism from~\cite{PiSaSch94},
see Section~\ref{ss:PSS} below.

\subsubsection{Smaller classes of almost complex structures}
Given an $\omega$-compatible almost complex structure $J$ on~$M$ that does not depend on~$s$,
define
\[
A_J(H) \,=\, \sup_{K \in \ck (H)} \min_{\cu(K,J)} a_H^- . 
\]
While we do not know if $A_J(H)$ depends on $J$,
the number 
$\sup_{J} A_J(H)$
is of course independent of~$J$.
All of the functions $A_J(H)$ and $\sup_J A_J(H)$ on $C^\infty (\T \times M)$ are minimal action selectors, 
by the same (and sometimes easier) arguments as for~$A(H)$. 
We have chosen to give the construction for~$A(H)$ since this is more natural given
our deformation approach. 
Clearly, 
$$
A(H) \,\geq\, \sup_{J} A_J(H) \,\geq\, A_J (H) \quad \mbox{ for every $J$ and $H$.}
$$
Are these inequalities all equalities?
A class of Hamiltonian functions for which $A(H) = A_J(H)$ for every~$J$ is given in Remark~\ref{rem:ref}.
A somewhat different class is given by the intersection of the Hamiltonians in Proposition~\ref{p:two}
and the proposition below.

For the selectors $A_J$ and hence also for $\sup_J A_J$ we have
the following variant of Proposition~\ref{p:two}, 
which is Proposition~4.4.2 in~\cite{Ha18}. 
Its proof appeals to the transversality and gluing analysis from Floer theory.

\begin{prop} \label{p:morse}
Let $H \in C^{\infty}(M)$ be an autonomous Hamiltonian such that $X_H$ has no non-constant contractible 
closed orbits of period $T \in (0,1]$.
Then for every $\omega$-compatible $J$,
\[
A_J(H) \,=\, \min_M H.
\]
\end{prop}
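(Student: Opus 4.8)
\medskip
\noindent
\emph{Plan of the proof.}
The inequality $A_J(H)\ge\min_M H$ is formal. Indeed $A_J$ is a minimal action selector, so Property~5 of Proposition~\ref{p:formal} gives $A_J(H)\ge E^-(H)=\int_\T\min_M H\,dt=\min_M H$, the last equality because $H$ is autonomous. (Alternatively: the hypothesis forces $\cp(H)=\crit(H)$, a non-constant contractible $1$-periodic orbit having period $1\in(0,1]$; so by Proposition~\ref{comp}~(ii) every $u\in\cu(K,J)$ has $a_H^-(u)=\A_H(x)\ge\min_M H$ for some $x\in\crit(H)$, whence $\min_{\cu(K,J)}a_H^-\ge\min_M H$ for every $(K,J)$.) Since $a_H^-\ge\min_M H$ on each $\cu(K,J)$, the reverse inequality $A_J(H)\le\min_M H$ amounts to showing that for \emph{every} $K\in\ck(H)$ the space $\cu(K,J)$ contains a finite-energy Floer cylinder $u$ with $a_H^-(u)=\min_M H$, i.e.\ one emanating at $s\to-\infty$ from the constant orbit at a global minimum point of~$H$. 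This is the statement to establish.

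For one distinguished homotopy it is immediate: if $p$ is a global minimum point of~$H$ and $\beta\colon\R\to[0,1]$ is non-decreasing with $\beta\equiv1$ near $-\infty$ and $\beta\equiv0$ near $+\infty$, then $K_\beta(s,t,x):=\beta(s)H(x)$ lies in $\ck(H)$ and the constant cylinder $u\equiv p$ solves its Floer equation, since $X_{K_\beta}(s,t,p)=\beta(s)X_H(p)=0$, with $a_H^-(u)=\min_M H$. The content of the proposition is the persistence of such a solution under an \emph{arbitrary} change of homotopy, and for this the plan is to pass to Floer theory. After a $C^2$-small autonomous perturbation — which preserves the absence of contractible non-constant orbits of period $T\in(0,1]$ — one may assume $H$ non-degenerate with a unique global minimum~$p$, and then $p$ is a cycle in the Floer chain complex $\CF_*(H,J)$ of smallest action, so its class $[p]\in\HF_0(H,J)$ is non-zero. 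Asphericity ($[\omega]|_{\pi_2(M)}=0$) excludes bubbling and the period hypothesis excludes breaking along non-constant periodic orbits, so $\HF_*(H,J)\cong H_*(M;\Z_2)$ and, for every $K\in\ck(H)$ (perturbing the endpoint data too if necessary, or reducing to $K^+=0$ by the variant of Proposition~\ref{p:down} for~$A_J$), the continuation map $\Phi_K$ determined by counting rigid elements of $\cu(K,J)$ is an isomorphism. Hence the cycle $\Phi_K(p)$ is non-zero; being a sum of endpoint orbits $y$ with coefficient equal to the number of rigid cylinders in $\cu(K,J)$ running from~$p$ to~$y$, at least one such cylinder exists, and it has $a_H^-=\min_M H$. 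Letting the perturbation tend to zero and using the compactness and lower-semicontinuity estimates of Section~\ref{s:not} — as in the proof of Proposition~\ref{mon} — one recovers, for the original data $(H,J)$, a cylinder $u\in\cu(K,J)$ with $a_H^-(u)=\min_M H$.

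The main obstacle is exactly this Floer-theoretic core: realizing the perturbed Floer and continuation moduli spaces as transversally cut-out manifolds of the expected dimension and gluing them, so that $\HF_*(H,J)$ and the maps $\Phi_K$ are well defined and each $\Phi_K$ is an isomorphism. This is unavailable through the elementary Gromov compactness used in Sections~\ref{s:not}--\ref{s:minimal}. In particular the simple argument behind Proposition~\ref{p:two} does not adapt: once $\A_H$ has critical values above $\min_M H$, the bound $a_H^-\ge\min_M H$ is compatible with $\cu_{\ess}(K,J)$ consisting entirely of cylinders asymptotic to critical points strictly above $\min_M H$, so the surjectivity of $\ev_z$ on $\cu_{\ess}(K,J)$ yields no contradiction. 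A complete proof is given in~\cite[Proposition~4.4.2]{Ha18}.
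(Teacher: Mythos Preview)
Your proposal matches the paper's treatment: the paper does not prove this proposition but only remarks that ``its proof appeals to the transversality and gluing analysis from Floer theory'' and cites \cite[Proposition~4.4.2]{Ha18}, exactly as you do. Your sketch --- the formal lower bound $A_J(H)\ge\min_M H$, then a Floer-theoretic argument (perturbation to non-degeneracy, non-triviality of the class of the global minimum, continuation maps producing a cylinder from the minimum for every~$K$) --- is precisely the route the paper alludes to, and you correctly flag that it lies outside the elementary compactness toolkit of Sections~\ref{s:not}--\ref{s:minimal}.
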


\subsection{Action selectors associated to other cohomology classes} 

By using the result stated in Remark~\ref{rich}, one can construct spectral values
$A(\xi,H) \in \spec (H)$ for every non-zero cohomology class $\xi \in H^*(M;\Z_2)$. 
In the case $\xi = 1 \in H^0(M;\Z_2)$, the value $A(1,H)$ agrees with~$A(H)$.
These spectral values are monotone and continuous in~$H$ and hence are action selectors, 
but for $\xi \neq 1$ they are in general larger than the action selector~$A$ and not minimal. 
For instance, for the generator~$[M]$ of~$H^{2n}(M;\Z_2)$ and for $C^2$-small autonomous Hamiltonians 
with exactly two critical values we have
\[
A([M],H) \,=\, \max_M H .
\]
We refer to \cite[\S 5]{Ha18} for the proofs and for further properties of these action selectors.

\subsection{Comparison with the PSS selector} \label{ss:PSS}
                         
Recall that in \cite{Sch00} and~\cite{FrSch07} the PSS selector was constructed
on closed and convex symplectically aspherical manifolds with the help of 
Floer homology.
While our selector~$A$ already has many applications to Hamiltonian dynamics and symplectic geometry, 
some of the applications of the PSS selector rely on additional properties, that we were not able to verify 
for the selector~$A$. 
One such property is the triangle inequality 
$$
\sigma_{\PSS} \left(G \# H \right) \,\geq\, \sigma_{\PSS} (G) + \sigma_{\PSS} (H) ,
$$
that is stronger than the composition property~7 in Proposition~\ref{p:formal}.
Proving the triangle inequality requires the compatibility of the selector with the pair of paints product.
The triangle inequality can be used, for instance, 
to define a bi-invariant metric on the group $\Ham (M,\omega)$ that in general is different from the Hofer metric, 
and to construct partial symplectic quasi-states~\cite{EnPo06, PoRo14}.
Another property of the PSS selector is the minimum formula from~\cite{HuLeSe15}:
Given $H_1$ and~$H_2$ with support in disjoint incompressible Liouville domains, 
$$
\sigma_{\PSS} (H_1+H_2) = \min \left\{ \sigma_{\PSS} (H_1), \sigma_{\PSS}(H_2) \right\} .
$$
It is shown in~\cite{HuLeSe15} that for any minimal action selector~$\sigma$ satisfying this
formula there is an algorithm for computing~$\sigma$
on autonomous Hamiltonians on surfaces different from the sphere.

All properties of the PSS selector would of course
hold for our selector~$A$ if we could show that they agree. 
The selectors $A$ and~$\sigma_{\PSS}$ both select ``essential'' critical values, 
but in a rather different way:
While $A(H)$ is the highest critical value of~$\A_H$ such that all strictly lower
critical points can be ``shaken off'',
$\sigma_{\PSS}(H)$ is the $\A_H$-action of the lowest homologically visible generator of
the Floer homology of~$H$.
Assuming that the reader is familiar with Floer homology,
we describe $\sigma_{\PSS}(H)$ in a way relevant for its comparison with~$A(H)$.

By the $C^0$-continuity of both selectors, we can assume that
all contractible $1$-periodic orbits of~$H$ are non-degenerate, in the sense
that for every such orbit~$x$, $1$ is not in the spectrum of the
linearized return map $d\phi_H^1 (x(0))$.
There are then finitely many $1$-periodic orbits of~$\phi_H^t$.
Fix $K \in \ck(H)$ such that $K^+=0$ and $J$ with $J^-$ generic.
Recall that for such functions~$K$, for every element $u \in \cu (K,J)$ 
the limit $\ev (u) := \lim_{s \to +\infty}u(s,t) \in M$ exists.
Choose a Morse function~$f$ on~$M$ with only one minimum~$m$,
and let $W^{\rm s}(m)$ be the stable manifold of~$m$
with respect to the gradient flow $-\nabla f$ of a generic Riemannian metric on~$M$.
Then $\sigma_{\PSS}(H)$ is the smallest action~$\A_H (x)$ of a contractible 1-periodic orbit~$x$
with the following properties:
$x$ is a generator of $\HF_0(H,J^-;\Z)$
(namely $x$ is in the kernel of the Floer boundary operator $\partial_{J^-}$ but not in its image, 
and $x$ has Conley--Zehnder index~$0$), 
and the number of those elements $u \in \cu(K,J)$ that start at~$x$ and satisfy $\ev (u) \in W^{\rm s}(m)$ is odd.
Then clearly $\sigma_{\PSS}(H) \geq \min_{\cu(K,J)}a_H^-$.
Since $\sigma_{\PSS}(H)$ does not depend on the choice of $K \in \ck_0(H)$ nor on~$J$,
we conclude that $\sigma_{\PSS}(H) \geq A_0(H)$. Together with Proposition~\ref{p:down}
we obtain the following result, which is Proposition~9.1.1 in~\cite{Ha18}. 

\begin{prop} \label{p:PSS}
$\sigma_{\PSS}(H) \geq A(H)$ for all $H \in C^\infty (\T \times M, \R)$.
\end{prop}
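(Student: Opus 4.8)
By Proposition~\ref{p:down} we have $A(H)=A_0(H)$, where
\[
A_0(H)\,=\,\sup_{(K,J)\in\cd_0(H)}\,\min_{u\in\cu(K,J)}a_H^-(u),\qquad \cd_0(H)=\ck_0(H)\times\cj_\omega(M),
\]
so it is enough to show $A_0(H)\le\sigma_{\PSS}(H)$. Both selectors are continuous for the $C^0$-topology --- $A$ by Proposition~\ref{p:continuity}, $\sigma_{\PSS}$ by standard Floer theory --- so by perturbing $H$ and passing to the limit I would first reduce to the case in which all contractible $1$-periodic orbits of $X_H$ are non-degenerate; this is the setting in which the description of $\sigma_{\PSS}(H)$ recalled above applies.

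Next I would prove that $\min_{\cu(K,J)}a_H^-\le\sigma_{\PSS}(H)$ for every $(K,J)\in\cd_0(H)$ with $J^-$ generic. Fixing a Morse function $f$ with unique minimum $m$ and a generic metric, the description of $\sigma_{\PSS}$ provides a contractible $1$-periodic orbit $x$ of $X_H$ with $\A_H(x)=\sigma_{\PSS}(H)$ that is the $s\to-\infty$ limit orbit of an \emph{odd}, hence nonzero, number of elements $u\in\cu(K,J)$ with $\ev(u)\in W^{\mathrm{s}}(m)$. Picking any such $u$, the convergence $u(s)\to x$ in $C^\infty(\T,M)$ as $s\to-\infty$ and the continuity of $\A_H$ give $a_H^-(u)=\A_H(x)=\sigma_{\PSS}(H)$ (equivalently, $x\in\alpha\mbox{-}\lim(u)$ and Proposition~\ref{comp}~(ii) applies), so $\min_{\cu(K,J)}a_H^-\le a_H^-(u)=\sigma_{\PSS}(H)$.

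Finally I would remove the genericity hypothesis by checking that the supremum defining $A_0(H)$ is not diminished if one restricts to pairs $(K,J)$ with $J^-$ generic. Given an arbitrary $(K,J_0)\in\cd_0(H)$, choose $J_n\in\cj_\omega(M)$ with $J_n\to J_0$ in $C^\infty_{\loc}$ and each $J_n^-$ generic, let $u_n\in\cu(K,J_n)$ realize $\min_{\cu(K,J_n)}a_H^-$, and use the Gromov compactness of Proposition~\ref{comp} to extract a subsequential limit $u_0\in\cu(K,J_0)$. Since $a_H^-$ is lower semicontinuous, $\min_{\cu(K,J_0)}a_H^-\le a_H^-(u_0)\le\liminf_n a_H^-(u_n)\le\sigma_{\PSS}(H)$ by the previous step; taking the supremum over $(K,J_0)\in\cd_0(H)$ gives $A_0(H)\le\sigma_{\PSS}(H)$, whence $A(H)\le\sigma_{\PSS}(H)$.

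The only substantial input is the Floer-theoretic description of $\sigma_{\PSS}(H)$ used in the second step --- the existence of a lowest-action generator of $\HF_0(H,J^-)$ that is reached by an odd count of elements of $\cu(K,J)$ evaluating into $W^{\mathrm{s}}(m)$ --- which rests on transversality, gluing and the PSS isomorphism and is exactly the machinery our elementary construction avoids. In this proof it is taken as established in the discussion preceding the statement, so what remains to be supplied is only the bookkeeping above: identifying $a_H^-$ of a PSS solution with the action of its asymptotic orbit, and the semicontinuity argument that lets one drop the genericity of $J^-$.
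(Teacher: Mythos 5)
Your proof is correct and follows essentially the same route as the paper's: reduce to non-degenerate $H$ by $C^0$-continuity, use the Floer-theoretic description of $\sigma_{\PSS}(H)$ to produce, for each $(K,J)\in\cd_0(H)$ with $J^-$ generic, an element $u\in\cu(K,J)$ with $a_H^-(u)=\sigma_{\PSS}(H)$, and then invoke Proposition~\ref{p:down}. Your extra semicontinuity argument for dropping the genericity of $J^-$ is a welcome piece of care that the paper handles only implicitly (by noting that $\sigma_{\PSS}$ is independent of the choices), and it is consistent with the compactness arguments used elsewhere in the paper.
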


\begin{open}
Is it true that $A(H) = \sigma_{\PSS}(H)$ for all $H \in C^\infty (\T \times M, \R)$\,?
\end{open}

The following remark was made by the referee.

\begin{rem} \label{rem:ref}
The equality $A(H) = \sigma_{\PSS}(H)$ holds for $C^2$-small autonomous Hamiltonians~$H$,
and more generally for those $H$ with the following properties. 

\s
\begin{itemize}
\item[(H1)] 
There exists $m \in M$ such that
$H_t(m) =\min_{x \in M} H_t(x)$ for every $t \in \T$.
\item[(H2)] 
1 is not in the spectrum of the linearized flow 
$d \phi_H^t(m)$ for all $t \in (0,1]$.
\item[(H3)] 
The flow of $X_H$ has no non-constant contractible 
closed orbits of period $T \in (0,1]$.
\end{itemize}

\noindent
Indeed, for such Hamiltonians we have
\begin{equation} \label{ids3}
A (H) \,=\, \sigma_{\PSS}(H) \,=\, \int_\T H_t(m)\, dt .
\end{equation}
\end{rem}

\noindent
For the proof we shall show that
\begin{equation*} 
\int_\T H_t(m)\, dt \,=\, \sigma_{\PSS}(H) \,\geq\, A (H) \,\geq\, \int_\T H_t(m)\, dt .
\end{equation*}
The first inequality follows from Proposition~\ref{p:PSS}. 
Assumption~(H3) in particular implies that the only contractible 1-periodic orbits of the flow of~$X_H$
are the rest points.
Together with~(H1) we obtain $\int_\T H_t(m)\, dt = \min \spec (H)$,
whence the second inequality follows in view of the spectrality of~$A$.
For the equality 
\begin{equation} \label{e:HPSS}
\int_\T H_t(m)\, dt \,=\, \sigma_{\PSS}(H)
\end{equation}
we first notice that the assumptions (H1) and~(H3) imply that the constant
orbit~$m$ is a critical point of the action functional~$\A_{s H}$ for every
$s \in [0,1]$ and that for any other critical point $y$ of~$\A_{s H}$,
\begin{equation}  \label{minact}
\A_{s H} (y) \,\geq\, \A_{s H}(m)
\,=\, s \int_{\T} H_t(m) \,dt, \quad\, s \in [0,1].
\end{equation}
By the continuity of $\sigma_{\PSS}$ it suffices to prove~\eqref{e:HPSS} for a $C^0$-close Hamiltonian.
In view of the non-degeneracy assumption~(H2) we find a $C^2$-small perturbation such that the 
contractible 1-periodic orbits of the new~$H$ are non-degenerate and such that 
(H1), (H2), and~\eqref{minact} still hold for the same point~$m$.
(There now may be non-constant contractible 1-periodic orbits~$y$.)

In the above description of the PSS selector we then choose the Morse function~$f$ such that $m$ is the unique minimum
and the deformation~$K$ of the form $K = \beta (s) H$ with a cut-off function~$\beta$.
One can now show using~\eqref{minact} that for a generic choice of the path~$J_s$ 
the critical point~$m$ is indeed selected by~$\sigma_{\PSS}$, see the proof of Theorem~5.3 in~\cite{FrSch07}.

\medskip
We also remark that the inequality $A(H) \geq A_J(H)$ and the spectrality of~$A_J$
imply that on the above class of Hamiltonian functions, every action selector $A_J$ is also equal 
to the three quantities in~\eqref{ids3}.

\appendix
\section{}

In this appendix we prove the following existence result for zeroes of a section of a Banach bundle, 
which is used in the proof of Proposition~\ref{surj}. Results of this kind are well-known and widely 
used in nonlinear analysis. The proof uses standard ideas from degree theory for proper Fredholm maps. 

\begin{thm}
\label{abstract}
Let $\pi \colon E \rightarrow B$ be a smooth Banach bundle over the Banach manifold~$B$ and let
\[
S \colon [0,1] \times B \rightarrow E
\]
be a $C^2$ map such that $S(t,\cdot)$ is a section of $E$ for every $t\in [0,1]$. 
Assume that $S$ satisfies the following conditions:
\begin{enumerate}[\rm (i)]
\item The inverse image $S^{-1}(0_E)$ of the zero section $0_E$ is compact.
\item For every $(t,x)\in S^{-1}(0_E)$ the fiberwise differential of the section $S(t,\cdot)$ at $x$ 
is a Fredholm operator of index~$0$.
\item There exists a unique $x_0 \in B$ such that $S(0,x_0)\in 0_E$.
\item The fiberwise differential of $S(0,\cdot)$ at $x_0$ is an isomorphism.
\end{enumerate}
Then the restriction of the projection 
$[0,1] \times B \to [0,1]$
to $S^{-1}(0_E)$ is surjective. In particular, there exists at least one $x_1 \in B$ such that $S(1,x_1) \in 0_E$.
\end{thm}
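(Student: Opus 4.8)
The plan is to attach to each section $S(t,\cdot)$ a mod~$2$ degree counting its zeroes, to use hypothesis~(i) to show that this degree does not depend on~$t$, and to use~(iii) and~(iv) to compute it to be~$1$ at $t=0$. Since a nonzero mod~$2$ degree forces the existence of a zero, this shows that $S(t,\cdot)$ has a zero for every $t\in[0,1]$, which is exactly the assertion that the projection $S^{-1}(0_E)\to[0,1]$ is surjective; in particular $S(1,\cdot)$ has a zero~$x_1$.

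The main step is a finite-dimensional reduction near the compact set $Z:=S^{-1}(0_E)$. For $(t,x)\in Z$ the fiberwise differential of $S(t,\cdot)$ at~$x$ is Fredholm of index~$0$ by~(ii), so it has finite-dimensional cokernel; using the compactness of~$Z$ and the local triviality of~$\pi$, I would choose finitely many smooth local sections $e_1,\dots,e_N$ of~$E$, defined near the image of~$Z$ in~$B$, such that at every $(t,x)\in Z$ the vectors $e_1(x),\dots,e_N(x)$ together with the range of the fiberwise differential span the fibre~$E_x$. Then the stabilised map
\[
\widehat{S}(t,x,\lambda) \,:=\, S(t,x) + \sum_{i=1}^N \lambda_i\, e_i(x), \qquad \lambda\in\R^N,
\]
is transverse to the zero section $0_E$ along $Z\times\{0\}$, so by the implicit function theorem $\widehat{S}^{-1}(0_E)$ is a finite-dimensional $C^2$ manifold near $Z\times\{0\}$; a Lyapunov--Schmidt reduction along it produces, after shrinking neighbourhoods, a $C^2$ map
\[
s\colon [0,1]\times O \longrightarrow \R^k
\]
with $O\subset\R^k$ open and bounded, such that $s^{-1}(0)$ is homeomorphic to~$Z$ --- hence compact and, by~(i), contained in $[0,1]\times O$ with $0\notin s(t,\cdot)(\partial O)$ for every~$t$ --- and such that $s(0,\cdot)$ has a single zero which, by~(iv), is nondegenerate.

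For each~$t$ the Brouwer mod~$2$ degree $\deg_2\bigl(s(t,\cdot),O,0\bigr)\in\Z_2$ is then well defined, and its independence of~$t$ follows from the standard cobordism argument: applying Sard's theorem to the $C^2$ map~$s$ one perturbs~$s$ so as to be transverse to~$0$ without creating zeroes over $[0,1]\times\partial O$, so that $s^{-1}(0)$ becomes a compact $1$-manifold with boundary lying over $\{0\}\cup\{1\}$; a compact $1$-manifold has an even number of boundary points, whence the degrees at $t=0$ and $t=1$ coincide, and the same argument over each subinterval $[0,t]$ shows that $\deg_2\bigl(s(t,\cdot),O,0\bigr)$ is independent of~$t$. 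By~(iii) and~(iv) the unique zero of $s(0,\cdot)$ is nondegenerate, so this common value equals~$1$. Hence $\deg_2\bigl(s(t,\cdot),O,0\bigr)=1\neq 0$ for every~$t$, which forces $s(t,\cdot)$, and therefore $S(t,\cdot)$, to vanish somewhere; this proves the surjectivity of the projection.

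I expect the finite-dimensional reduction to be the principal obstacle. One must choose the stabilising sections $e_i$ using only the Fredholm property \emph{at the zeroes} and the compactness of~$Z$, and then shrink the various neighbourhoods with enough care that the reduced map~$s$ captures $S^{-1}(0_E)$ exactly --- no spurious and no missing zeroes --- and that $s^{-1}(0)$ is compactly contained in the coordinate domain; here hypothesis~(i) is precisely what keeps zeroes of $s(t,\cdot)$ away from~$\partial O$. Once the reduction is in place, the finite-dimensional degree theory and the computation at $t=0$ are routine.
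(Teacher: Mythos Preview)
Your overall strategy is sound and gives a legitimate alternative to the paper's argument. The paper works directly in infinite dimensions: after trivialising the bundle to reduce $S$ to a map $F\colon [0,1]\times B\to Y$, it uses local properness of Fredholm maps together with the Sard--Smale theorem to find regular values $y_n\to 0$ of the index-$1$ map~$F$; each preimage $F^{-1}(y_n)$, intersected with a suitable neighbourhood of~$F^{-1}(0)$, is a compact $1$-manifold whose boundary lies in $\{0,1\}\times B$, and by~(iii),~(iv) there is exactly one boundary point over~$t=0$, so the arc through that point reaches~$t=1$ and in fact surjects onto~$[0,1]$; a limit $y_n\to 0$ using properness then yields a zero of~$F(t,\cdot)$ for every~$t$. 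Your route trades the infinite-dimensional Sard--Smale step for a finite-dimensional reduction followed by ordinary Sard, but both arguments terminate in the same ``a compact $1$-manifold has an even number of boundary points'' parity count.

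Your reduction step, however, needs more justification than you give. The stabilised zero set $W=\widehat{S}^{\,-1}(0_E)$ is indeed an $(N{+}1)$-manifold near $Z\times\{0\}$, but you do not explain why it can be written as a product $[0,1]\times O$; the obvious candidate chart $(t,\lambda)\colon W\to [0,1]\times\R^N$ fails to be an immersion exactly where $D_xS(t,\cdot)$ has nontrivial kernel, so it does not furnish coordinates. What \emph{does} hold is that the projection $t\colon W\to[0,1]$ is a submersion --- because by your choice of the~$e_i$ the $(x,\lambda)$-differential of~$\widehat{S}$ is already onto the fibre --- and then an Ehresmann-type trivialisation (combined with the local properness of Fredholm maps, to make the relevant piece of~$W$ compact) identifies a neighbourhood of $Z\times\{0\}$ in~$W$ with a product $[0,1]\times W_0$, on which $\lambda$ becomes the desired homotopy $s\colon [0,1]\times W_0\to\R^N$. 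Here $W_0$ is an $N$-manifold rather than an open set in~$\R^N$, but that is all the mod-$2$ Brouwer degree requires. This is precisely the point you flagged as the principal obstacle, and it should be spelled out.
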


\begin{proof}
In order to simplify the notation, we assume that the Banach bundle $E$ has a global trivialization 
$E\cong B\times Y$, where the Banach space~$Y$ is the typical fiber of~$E$. The bundle to which we 
applied the theorem in the proof of Proposition~\ref{surj} 
has a global trivialization, since its typical fiber is an $L^p$-space and the 
general linear group of $L^p$-spaces is contractible by a version of Kuiper's theorem, \cite{Mit70}.
By using such a trivialization, we write
\[
S(t,x) = (x,F(t,x))
\]
for a suitable $C^2$-map $F \colon [0,1] \times B \rightarrow Y$ that has the following properties:
\begin{enumerate}[(i')]
\item The inverse image $F^{-1}(0)$ of $0 \in Y$ is compact.
\item For every $(t,x) \in F^{-1}(0)$ the differential of the map 
   $F(t,\cdot)$ at~$x$ is 
a Fredholm operator of index~$0$.
\item There exists a unique $x_0 \in B$ such that $F(0,x_0)=0$.
\item The differential of $F(0,\cdot)$ at $x_0$ is an isomorphism.
\end{enumerate}

We wish to show that the restriction of the projection $[0,1] \times B \to [0,1]$ to $F^{-1}(0)$ is surjective.
By (ii') the differential of~$F$ at each $(t,x) \in F^{-1}(0)$ is a Fredholm operator of index~1. 
Since Fredholm operators of a given index form an open set, there exists an open neighborhood 
$U \subset [0,1] \times B$ of $F^{-1}(0)$ on which $F$ is a Fredholm map of index~1. 
Fix $p \in F^{-1}(0)$.
Since Fredholm maps are locally proper \cite[p.\ 862, (1.6)]{Sm65}, 
we find an open neighborhood $V(p)$ such that $\overline V(p) \subset U$ and $F |_{\overline V(p)}$
is a proper map. Since $F |_{\overline V(p)} \colon \overline V(p) \to Y$ is a proper $C^2$ Fredholm map
of index~$1$, 
the Sard--Smale theorem \cite{Sm65} implies that the set~$R(p)$ of regular values of $F |_{V(p)} \colon V(p) \to Y$
is residual in~$Y$ (actually, in \cite{Sm65} the Sard--Smale theorem is stated under the assumption that the domain of the map 
-- in our case $[0,1] \times B$ -- is second countable, 
but the proof consists in showing the above local statement for proper Fredholm maps, see also \cite[p.\ 1106]{qs72}).                                                                   
Here, if $V(p)$ intersects $\{0,1\} \times B$ we view $V(p)$ as a manifold with boundary 
$V(p) \cap (\{0,1\} \times B)$, and a boundary point is considered to be regular if it is regular for the restriction of~$F$ to the boundary.
By~(i') we find finitely many points $p_j \in F^{-1}(0)$ such that $F^{-1}(0) \subset \bigcup_j V(p_j) =:V$.
The set $R := \bigcap_j R(p_j) \subset Y$ is also residual, and $F|_{\overline{V}}$ is a proper map. 

By (iii') and~(iv'), the point $(0,x_0)$ belongs to $F^{-1}(0)$ and $F(0,\cdot)$ 
is a local $C^2$-diffeomorphism at~$x_0$. Since $x_0$ is the unique zero of $F(0,\cdot)$, 
up to reducing~$V$ we may assume that the restriction of~$F$ to $V \cap (\{0\} \times B)$ is a diffeomorphism 
onto an open subset of~$Y$ containing a ball of radius~$r_0$ centered in~$0$.

\begin{figure}[h]   
 \begin{center}
  \psfrag{0}{$0$} \psfrag{1}{$1$} \psfrag{t}{$t$} \psfrag{x}{$x$} \psfrag{B}{$B$}
  \psfrag{x0}{$x_0$} \psfrag{z}{$z_n$} \psfrag{V}{$V$} \psfrag{G}{$\Gamma_n$}
  \leavevmode\includegraphics{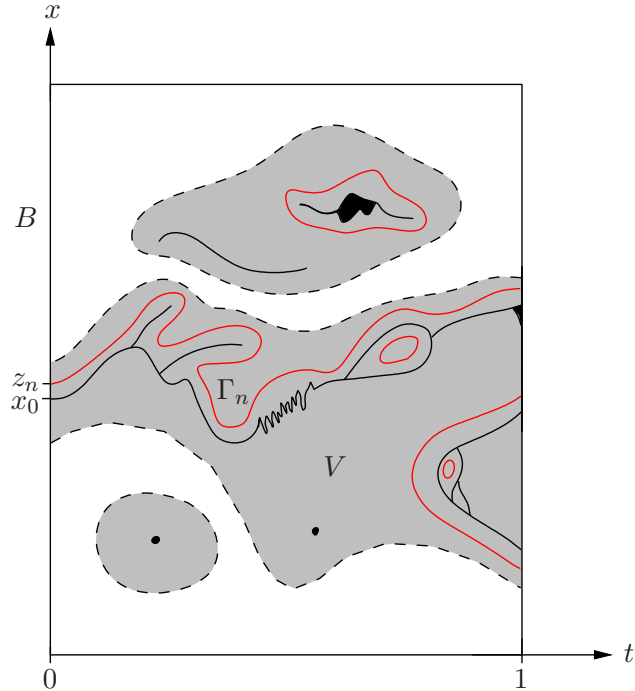}
 \end{center}
 \caption{The subsets $F^{-1}(0)$ and $\Gamma_n \subset \red{F^{-1}(y_n)}$ of $V \subset [0,1] \times B$.}  
 \label{fig.EBF}
\end{figure}

Denote by $\partial V$ the 
topological boundary of $V$ in $[0,1]\times B$. 
In Figure~\ref{fig.EBF} this set is indicated by the dashed curves.
Proper maps between metric spaces
are closed, so $F(\partial V)$ 
is a closed set and, since it does not contain~$0$, there exists a positive number~$r_1$ such that all the elements of $F(\partial V)$ have norm at least~$r_1$.

Altogether, for every natural number~$n$ 
we can find a regular value $y_n \in Y$ with 
\begin{equation} \label{e:yn}
\|y_n\| < \min \left\{ 2^{-n},r_0,r_1 \right\}.
\end{equation}
The set $F^{-1}(\{y_n\}) \cap V$ is a one-dimensional submanifold of $V$, and its boundary is precisely 
$F^{-1}(\{y_n\}) \cap V \cap (\{0,1\} \times B)$. Indeed, the fact that $\|y_n\| < r_1$ implies that 
$F^{-1}(\{y_n\})$ does not intersect the topological boundary $\partial V$ of~$V$ in $[0,1]\times B$. 
Together with the fact that $F|_{\overline{V}}$ is proper, this implies that the set $F^{-1}(\{y_n\}) \cap V$ 
is compact. 
Therefore, $F^{-1}(\{y_n\}) \cap V$ is a finite union of $C^2$-embedded images of $S^1$ and~$[0,1]$; 
embedded images of $[0,1]$ have boundary points on $V \cap (\{0,1\} \times B)$. 
The fact that $\|y_n\|<r_0$ and the property of the restriction of~$F$ to $V \cap (\{0\} \times B)$ 
stated above imply that $F^{-1}(\{y_n\})\cap V$ has exactly one point on $\{0\} \times B$, 
that we denote by $(0,z_n)$. Denote by $\Gamma_n$ the connected component of $F^{-1}(\{y_n\}) \cap V$ 
that contains~$(0,z_n)$. By what we have said above, $\Gamma_n$ is an embedded image of~$[0,1]$ 
with one boundary point $(0,z_n)$ and the other one on $\{1\}\times B$. By the connectedness of~$[0,1]$, 
the restriction of 
the projection $[0,1] \times B \to [0,1]$
to $\Gamma_n$ is then surjective. 

Now let $t \in [0,1]$ and let $u_n \in B$ be such that $(t,u_n)$ belongs to $\Gamma_n$. Then the sequence 
$(F(t,u_n)) = (y_n)$ tends to~$0$ by~\eqref{e:yn}, and by the properness of~$F$ on $\overline{V}$ the sequence $(t,u_n)$ has a subsequence which converges to some $(t,u)$. By the continuity of $F$ we have $F(t,u)=0$. 
This shows that $F^{-1}(0)$ intersects $\{t\}\times B$, as we wished to prove.
\end{proof}


\end{document}